\newtheorem{thm}{Theorem}[section]
\newtheorem{prop}[thm]{Proposition}
\newtheorem{lemma}[thm]{Lemma}
\newtheorem{cor}[thm]{Corollary}
\theoremstyle{definition}
\newtheorem{defn}[thm]{Definition}
\theoremstyle{remark}
\numberwithin{equation}{section}
\def\C{\mathbb{C}}
\def\Z{\mathbb{Z}}
\def\R{\mathbb{R}}
\def\Z{\mathbb{Z}}
\def\E{\mathcal{E}}
\def\g{\mathfrak{g}}
\def\O{\mathcal{O}}
\def\D{\mathcal{D}}
\def\F{\mathcal{F}}
\def\CC{\mathcal{C}}
\newcommand{\de}{\partial}
\newcommand{\db}{\overline{\partial}}
\newcommand{\ddb}{{\partial }\overline{\partial}}
\newcommand{\pkk}{$p-$K\"ahler }
\def\L{{\mathcal{L}}}
\def\H{\mathcal{H}}
\def\B{\mathcal{B}}
\def\G{\mathcal{G}}
\def\U{\mathcal{U}}
\def\V{{\mathcal{V}}}
\def\W{\mathcal{W}}
\def\A{\mathcal{A}}
\def\G{\mathcal{G}}
\def\U{\mathcal{U}}
\begin{document}

\title[duality]{Forms and currents defining generalized $p-$K\"ahler structures}

\author{Lucia Alessandrini}
\address{ Dipartimento di Scienze Matematiche Fisiche e Informatiche\newline
Universit\`a di Parma\newline
Parco Area delle Scienze 53/A\newline
I-43124 Parma
 Italy} \email{lucia.alessandrini@unipr.it}

\subjclass[2010]{Primary 53C55; Secondary 53C56, 32J27}

\keywords{K\"ahler manifold,
balanced manifold, SKT manifold, sG manifold, $p-$K\"ahler manifold, positive forms and currents.}

\begin{abstract}
This paper is devoted, first of all, to give a complete unified proof of the Characterization Theorem for compact generalized \pkk  manifolds (Theorem 3.2). The proof is based on the classical duality between \lq\lq closed\rq\rq positive forms and \lq\lq exact\rq\rq positive currents. 
In the last part of the paper we approach the general case of non compact complex manifolds, where  \lq\lq exact\rq\rq positive forms seem to play a more significant role than \lq\lq closed\rq\rq forms.
In this setting, we state the appropriate characterization theorems and give some interesting applications. 

\end{abstract}

\maketitle

\section{Introduction}

In his fundamental work \cite{Su} (1976), D. Sullivan started to study compact complex manifolds using \lq cycles\rq \ and, more generally, positive currents. As he says in the Introduction:

\lq\lq The idea is to consider currents which are \lq directed\rq \   by an a-priori given field of cones in the spaces of tangent $p-$vectors. Such a positivity condition leads to a compact convex cone of currents with a compact convex subcone of cycles (closed currents) (\dots) Moreover, because of the compactness one can apply the basic tools of linear analysis such as the theorems of Hahn-Banach and Choquet. The former allows one to construct closed $\CC^{\infty}-$forms satisfying positivity conditions (on the cone field) because of the duality between forms and currents.\rq\rq 

He observed that a compact complex $n-$dimensional manifold $M$ has natural cone structures, defined by the complex structure $J$: at a point $x$, $C_p(x)$ is the compact (i.e., with compact basis, see Definition I.1 ibidem) convex cone in $\Lambda_{2p}(T'_{x}M)$ generated by the positive combinations of $p-$dimensional complex subspaces. Moreover, a smooth  form 
$\Omega \in \E^{2p}(M)$ is transversal to the cone structure $C_p$ if, for every $x \in M$, and for every $v \in C_p(x), v \neq 0$, it holds $\Omega (v) > 0$
(see Definitions I.3 and I.4 ibidem).

The cone $\CC$ of structure currents associated to the cone structure $C_p$ is the closed convex cone of currents generated by the Dirac currents associated to elements of $C_p(x), x \in M$. In $\CC$, the closed currents are called structure cycles.

Sullivan proved, simply using the Hahn-Banach theorem, that on a compact complex manifold $M$ (Theorem I.7):

a) If no closed transverse form exists, some non-trivial structure cycle is homologous to zero in $M$.

b) If no non-trivial structure cycle exists, some transversal closed form is cohomologous to zero.

He gave some relevant applications: to symplectic structures on a compact complex manifold (sections 10 and 11), and, partially, to compact K\"ahler manifolds (III.15 and III.16).
\medskip

Later on, Harvey and Lawson \cite{HL} (1983) and Michelson \cite{Mi} (1982) apply the same ideas to compact K\"ahler and balanced manifolds, getting an \lq intrinsic characterization\rq \  of K\"ahler and balanced compact manifolds. While Sullivan considered a transversal symplectic 2-form, in duality with null-homologous structure cycles, Harvey and Lawson want to characterize by means of positive currents the K\"ahler condition, i.e. the datum of a closed strictly positive $(1,1)-$form. It turns out that the right space of currents is that of positive currents of bidimension $(1,1)$, which are $(1,1)-$components of boundaries (i.e., $T = (dS)_{1,1}$); such currents are structure currents in the sense of Sullivan, but no more structure cycles! (see \cite{HL}, p. 170). Hence they are no more flat currents, in general, and the closeness of the space of $(1,1)-$components of boundaries has to be proved, to allow the use of a Separation Theorem.

The same considerations apply to $(n-1,n-1)-$components of boundaries in the work of Michelsohn \cite{Mi} and to the case $1 < p< n-1$, which has been studied starting from \cite{AA} (1987), using both closed transverse $(p,p)-$forms (\pkk forms) and closed real transverse  $2p-$forms ($p-$symplectic forms).
\medskip

Some years later, also other \lq\lq closeness\rq\rq conditions on the fundamental forms of hermitian metrics have been studied: in particular, pluriclosed (i.e. closed with respect to the operator $i \ddb$) metrics (see  \cite{Eg}); such metrics are often called  {\it strong K\"ahler metrics with torsion} (SKT) (see among others \cite{FT1} or \cite{FT2}). 
Moreover, $(n-1)-$symplectic metrics are called {\it strongly Gauduchon metrics (sG)} by Popovici (see  \cite{Po1} and \cite{Po2}), while $(n-1)-$pluriclosed metrics are called {\it standard} or {\it Gauduchon metrics}. 

Hence we proposed in \cite{A1} (2011) a unified vision of the whole subject, by introducing for every $p, \ 1 \leq p \leq n-1$, the four classes of {\it generalized \pkk manifolds} (see Section 3).
\medskip

This paper is devoted, first of all, to give a complete unified proof of the Characterization Theorem for compact generalized \pkk  manifolds. The proof is based on the classical duality between \lq\lq closed\rq\rq positive forms and \lq\lq exact\rq\rq positive currents. 

We develop this kind of ideas in the other parts of the paper in two directions: reversing the role of closeness and exactness (\lq\lq closed\rq\rq positive currents and \lq\lq exact\rq\rq positive forms), and approaching the general case of non compact complex  manifolds.

As a matter of fact, the natural environment of \lq\lq exact\rq\rq   \pkk forms is that of non compact manifolds; indeed, $\C^n$ and Stein manifolds are K\"ahler with a form $\omega = i \ddb u$ ($u$ is a smooth strictly plurisubharmonic function).

Moreover, $q-$complete manifolds, and  1-convex manifolds with exceptional set $S$ of dimension $q-1$, are $p-$K\"ahler for ever $p \geq q$, with a $\ddb-$exact form.

Thus in Section 8 we state the convenient characterization theorems in the non compact case and give some interesting applications. 
\medskip

The plane of the paper is as follows:

In Section 2 we discuss the notion of positivity of forms, vectors and currents, while in Section 3 we introduce the generalized \pkk manifolds and their characterization by \lq\lq exact\rq\rq positive currents in the compact case. The complete proof of the Characterization Theorem 3.2 is given in Section 4, where we introduce also the machinery of exact sequences of suitable sheaves, that we shall use also in the second part of the paper.

In Section 5 we propose a characterization theorem with \lq\lq closed\rq\rq  currents and \lq\lq exact\rq\rq  forms on compact manifolds, also inspired by the work of Sullivan, which makes sense for $p > 1$.

From Section 6 on, we try to put exact generalized \pkk forms, or also  \lq locally\rq \  generalized closed  \pkk forms, on some classes of non compact manifolds. We collect in Section 7 the machinery to get some information about Bott-Chern and Aeppli cohomology, and in Section 8 the characterization theorems on non compact manifolds.  
\bigskip

 \section{Basic tools}
 
 Let $X$ be a complex manifold of dimension $n \geq 2$, let $p$ be an integer, $1 \leq p \leq n$. 
The purpose of this section is to discuss positivity of $(p,p)-$forms, $(p,p)-$vectors and $(p,p)-$currents: we refer to \cite{HK} and to \cite{De} as regards notation and terminology. 
 
 Positivity  involves only multi-linear algebra; therefore, take a complex vector space $E$ of dimension $n$, its associated vector spaces of $(p,q)-$forms  $\Lambda^{p,q}(E^*)$, 
 and a basis $\{\varphi_1, \dots, \varphi_n \}$ for $E^*$. 
   
 Let us  denote by $\varphi_I$ the product $\varphi_{i_1} \wedge \dots \wedge \varphi_{i_p}$, where $I = (i_1, \dots, i_p)$ is an increasing multi-index.
 Call $\sigma_p := i^{p^2} 2^{-p}$; thus,  
if $\zeta, \eta \in \Lambda^{p,0} (E^*)$, then $\overline{\sigma_p \zeta \wedge \bar{\eta}} = \sigma_p \eta \wedge \bar{\zeta}$, so that $\sigma_p \eta \wedge \bar{\eta}$ is real;
hence we get obviously that
 $\{ \sigma_p \varphi_I \wedge \overline{\varphi_I} , |I| = p \}$ is a basis for $\Lambda ^{p,p}_{\R} (E^*):= \{ \Psi \in \Lambda ^{p,p} (E^*) / \Psi = \overline{\Psi} \}$ and
 that $$dv = (\frac{i}{2}  \varphi_1 \wedge \overline{\varphi_1}) \wedge \dots \wedge (\frac{i}{2}  \varphi_n \wedge \overline{\varphi_n}) = \sigma_n \varphi_I \wedge \overline{\varphi_I} , \ I=(1, \dots , n)$$
is a volume form. 
We call a $(n,n)-$form $\tau$ {\it positive} ({\it strictly positive}) if $\tau = c\ dv, \ c \geq 0 \ ( c>0)$. We shall write $\tau \geq 0 \ ( \tau > 0)$.

\medskip

From now on, let $1 \leq p \leq n-1$ and let $k := n-p$.

\begin{defn}   
\begin{enumerate}
\item $\eta \in \Lambda^{p,0} (E^*)$ is called {\it simple} (or decomposable) if and only if there are $\{\psi_1, \dots, \psi_p \} \in E^*$ such that $\eta = \psi_{1} \wedge \dots \wedge \psi_{p}$.

\item $\Omega \in \Lambda ^{p,p}_{\R} (E^*)$ is called {\it strongly positive} ($\Omega \in SP^p$) if and only if 
$ \Omega = \sigma_p \sum_j \eta_j \wedge \overline{\eta_j} ,$ with $\eta_j$ simple.

\item $\Omega \in \Lambda ^{p,p}_{\R} (E^*)$ is called {\it positive} ($\Omega \in P^p$) if and only if for all $\eta \in \Lambda^{k,0} (E^*)$,
the $(n,n)-$form
$\tau := \ \Omega \wedge \sigma_k \eta \wedge \overline{\eta}$ is positive.

\item  $\Omega \in \Lambda ^{p,p}_{\R} (E^*)$ is called {\it weakly positive} ($\Omega \in WP^p$) if and only if 
for all $\psi_j \in E^*$, and for all $I = (i_1, \dots, i_k)$,
$\Omega \wedge \sigma_k \psi_I \wedge \overline{\psi_I}$ is a positive $(n,n)-$form. It is called {\it transverse} when it is {\it strictly} weakly positive, that is, when $\Omega \wedge \sigma_k \psi_I \wedge \overline{\psi_I}$ is a strictly positive $(n,n)-$form for $\sigma_k \psi_I \wedge \overline{\psi_I} \neq 0$ (i.e. $\psi_{i_1} ,\dots , \psi_{i_k}$ linearly independent).
\end{enumerate}
\end{defn}
\medskip

 {\bf 2.1.1 Remarks.}
 \medskip
 
 a) The sets $P^p, SP^p, WP^p$ and their interior parts are indeed convex cones; moreover, there are obvious inclusions: 
 $ SP^p \subseteq P^p \subseteq WP^p \subseteq \Lambda ^{p,p}_{\R}(E^*)$
 \medskip
 
 b) When $p=1$ or $p=n-1$, the three cones coincide, since every $(1,0)-$form is simple (and hence also every $(n-1,0)-$form is simple).
  In the intermediate cases, $1< p< n-1$, the inclusions are strict (\cite{HK}).
   \medskip
 
 c) Using the volume form $dv$,  we get the pairing 
 $$f : \Lambda ^{p,p}(E^*) \times \Lambda ^{k,k} (E^*) \to \C, \ \ f(\Omega, \Psi)dv = \Omega \wedge \Psi .$$ 

 Thus:   $$\Omega \in WP^p \iff \forall \ \Psi \in SP^k, \Omega \wedge \Psi \geq 0.$$
  $$\Omega \in SP^p \iff \forall \ \Psi \in WP^k, \Omega \wedge \Psi \geq 0,$$
 $$ \Omega \in P^p \iff \forall \ \Psi \in P^k, \Omega \wedge \Psi \geq 0.$$

\bigskip
As regards vectors, consider $\Lambda_{p,q}(E)$, the space of $(p,q)-$vectors: as before, $V \in \Lambda_{p,0}(E)$ is called a {\it simple vector} if 
$V = v_{1} \wedge \dots \wedge v_{p}$ for some $v_j \in E$; 
in this case, when $V \neq 0$, $\sigma_p^{-1} V \wedge \overline V$ is called a {\it strictly strongly positive} $(p,p)-$vector.
We can identify strictly strongly positive $(p,p)-$vectors  with $p-$planes in $\C^n$, i.e. with the elements of $G_{\C}(p,n)$; to every plane corresponds a unique unit vector.

\medskip
  \begin{prop}
$\Omega \in \Lambda ^{p,p}_{\R} (E^*)$ is  transverse if and only if $\Omega(\sigma_p^{-1} V \wedge \overline V) > 0$
for every $V \in \Lambda_{p,0}(E)$, $V \neq 0$ and simple.
  \end{prop}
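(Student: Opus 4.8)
The plan is to reduce the equivalence to a single multilinear identity that relates the product appearing in the definition of transversality to the evaluation of $\Omega$ on strongly positive simple vectors. First I would rewrite transversality in a handier form: by Definition 2.1(4) (equivalently, by Remark 2.1.1 c) together with the description of $SP^k$), $\Omega\in\Lambda^{p,p}_{\R}(E^*)$ is transverse exactly when $\Omega\wedge\sigma_k\beta\wedge\overline{\beta}$ is a strictly positive $(n,n)$-form for \emph{every} nonzero simple $\beta\in\Lambda^{k,0}(E^*)$; indeed, the increasing multi-indices $I$ in the definition merely parametrise the simple $(k,0)$-forms $\beta=\psi_{i_1}\wedge\dots\wedge\psi_{i_k}$, and the linear independence of the $\psi_{i_j}$ is the same as $\beta\neq0$.

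The core of the argument is then the following claim: for every nonzero simple $V\in\Lambda_{p,0}(E)$ there are a nonzero simple $\beta\in\Lambda^{k,0}(E^*)$ and a volume form $dv$ with
\[ \Omega\wedge\sigma_k\beta\wedge\overline{\beta}=\Omega(\sigma_p^{-1}V\wedge\overline V)\,dv, \]
and, conversely, every nonzero simple $\beta$ arises this way from some nonzero simple $V$. To prove it, I would work in a basis adapted to the data: given $V=v_1\wedge\dots\wedge v_p$, extend $v_1,\dots,v_p$ to a basis $v_1,\dots,v_n$ of $E$, let $\varphi_1,\dots,\varphi_n$ be the dual basis, set $\beta:=\varphi_{p+1}\wedge\dots\wedge\varphi_n$ and let $dv=\sigma_n\varphi_1\wedge\dots\wedge\varphi_n\wedge\overline{\varphi_1}\wedge\dots\wedge\overline{\varphi_n}$ be the associated volume form; in the other direction, complete the factors of a given nonzero simple $\beta$ to a basis and take $V$ to be the wedge of the first $p$ vectors of the dual basis. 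Since both sides of the displayed identity are $\C$-linear in $\Omega$, it suffices to check it on $\Omega=\varphi_A\wedge\overline{\varphi_B}$ for increasing $p$-multi-indices $A,B$: wedging with $\varphi_{p+1}\wedge\dots\wedge\varphi_n\wedge\overline{\varphi_{p+1}}\wedge\dots\wedge\overline{\varphi_n}$ kills every term except $A=B=(1,\dots,p)$; the evaluation pairing gives $(\varphi_A\wedge\overline{\varphi_B})(v_{(1\dots p)}\wedge\overline{v_{(1\dots p)}})=\delta_{A,(1\dots p)}\delta_{B,(1\dots p)}$; and the surviving term on the left equals $\sigma_p^{-1}\,dv$ by the elementary relation $\varphi_{(1\dots p)}\wedge\overline{\varphi_{(1\dots p)}}\wedge\sigma_k\varphi_{(p+1\dots n)}\wedge\overline{\varphi_{(p+1\dots n)}}=\sigma_p^{-1}\,dv$, itself a consequence of reordering the factors (a sign $(-1)^{pk}$) and the elementary identity $\sigma_n=(-1)^{pk}\sigma_p\sigma_k$. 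Hence both sides equal $\sigma_p^{-1}\,dv$ if $A=B=(1,\dots,p)$ and vanish otherwise; note that the right-hand side is a real multiple of $dv$ because $\Omega\in\Lambda^{p,p}_{\R}(E^*)$ and $\sigma_p^{-1}V\wedge\overline V$ is a real vector, so the strict inequality in the statement is meaningful.

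Granting the claim, the proposition follows at once. If $\Omega(\sigma_p^{-1}V\wedge\overline V)>0$ for every nonzero simple $V$, then given any nonzero simple $\beta$ we produce $V$ and $dv$ as above and read off from the identity that $\Omega\wedge\sigma_k\beta\wedge\overline{\beta}$ is strictly positive, so $\Omega$ is transverse; conversely, if $\Omega$ is transverse, then for any nonzero simple $V$ we produce $\beta$ and $dv$, and the strict positivity of $\Omega\wedge\sigma_k\beta\wedge\overline{\beta}$ forces $\Omega(\sigma_p^{-1}V\wedge\overline V)>0$. The only genuinely delicate point is the sign bookkeeping in the elementary relation above---the powers of $i$, the reordering sign $(-1)^{pk}$, and the normalisation of the evaluation pairing---which must be pinned down so that the constant is exactly $\Omega(\sigma_p^{-1}V\wedge\overline V)$; everything else is formal. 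Conceptually this relation is just the classical isomorphism $\Lambda^{k,0}(E^*)\cong\Lambda_{p,0}(E)\otimes\Lambda^{n,0}(E^*)$ given by contraction with a volume form, under which nonzero simple $(k,0)$-forms correspond precisely to nonzero simple $p$-vectors.
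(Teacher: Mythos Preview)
Your proposal is correct and follows essentially the same approach as the paper. The paper packages the key identity as an abstract isomorphism $g:\Lambda_{p,p}(E)\to\Lambda^{k,k}(E^*)$ determined by a fixed volume form (with $g(\sigma_p^{-1}e_I\wedge\overline{e_I})=\sigma_k\varphi_J\wedge\overline{\varphi_J}$ on basis elements), whereas you rederive the same identity $\Omega\wedge\sigma_k\beta\wedge\overline\beta=\Omega(\sigma_p^{-1}V\wedge\overline V)\,dv$ by adapting a basis to each given $V$ (or $\beta$); the sign computation $\sigma_n=(-1)^{pk}\sigma_p\sigma_k$ and the conclusion are identical.
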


\begin{proof}  
Using the pairing $f$, we get an isomorphism $g: \Lambda_{p,p}(E) \to \Lambda ^{k,k} (E^*)$ given as:
$f(\Omega, g(A)) = \Omega(A),$ i.e.
$$f(\Omega, g(A)) dv = \Omega \wedge g(A) := \Omega(A) dv, \quad  \forall A \in \Lambda_{p,p}(E), \forall \Omega \in  \Lambda ^{p,p} (E^*).$$
If $\{e_1, \dots, e_n \}$ denotes the dual basis of $\{\varphi_1, \dots, \varphi_n \}$, it is not hard to check that for all $I = (i_1, \dots, i_p)$, 
$g(\sigma_p^{-1} e_I \wedge \overline {e_I}) = \sigma_k \varphi_J \wedge \overline {\varphi_J}$ with $J =  \{1, \dots, n \} - I.$

Thus 
the isomorphism $g$ transforms $(p,p)-$vectors of the form $\sigma_p^{-1} V \wedge \overline V$, with $V$ simple,  into strongly positive $(k,k)-$forms (of the form $\sigma_k \eta \wedge \overline{\eta},$ with $\eta$ simple). 
Hence we get
$$\Omega(\sigma_p^{-1} V \wedge \overline V) dv = \Omega \wedge g(\sigma_p^{-1} V \wedge \overline V) =  \Omega \wedge \sigma_k \eta \wedge \overline \eta $$
and the statement follows. 
\end{proof}
 
 \bigskip 

Let us turn back to a manifold $X$; for $0 \leq p \leq n$, we denote  by ${\D}^{p,p}(X)_\R$ the space of compactly supported real $(p,p)-$forms on $X$ and by ${\E}^{p,p}(X)_\R$ the space of real $(p,p)-$forms on $X$. 

Their dual spaces are: ${\D}_{p,p}'(X)_{\R}$ (also denoted by ${\D '}^{k,k}(X)_{\R}$, where $p+k=n$), the space of real currents of bidimension $(p,p)$ or bidegree $(k,k)$, which we call {\it $(k,k)-$currents}, and 
${\E}_{p,p}'(X)_{\R}$ (also denoted by ${\E '}^{k,k}(X)_{\R}$), the space of compactly supported real $(k,k)-$cur\-rents on $X$.

\begin{defn}  
 The form $\Omega \in {\E}^{p,p}(X)_\R$ is called {\it strongly positive} (resp. {\it positive, weakly positive, transverse} or {\it strictly weakly positive}) if: 
 
 $\forall \ x \in X, \ \Omega_x \in SP^p (T'_xX^*)$ (resp. $P^p (T'_xX^*), \  WP^p (T'_xX^*),$ $(WP^p (T'_xX^*))^{int}$). 

These spaces of forms are denoted by
$SP^p(X), \ P^p(X),$ $ WP^p(X),$ $(WP^p(X))^{int}$. 
\end{defn}

\begin{defn}  
Let $T \in {\E}_{p,p}'(X)_{\R}$ be a current of bidimension $(p,p)$ on $X$. Let us define:

{\it weakly positive currents}: $T \in WP_p(X) \iff T(\Omega) \geq 0\ \ \forall \ \Omega \in SP^p(X)$. 

{\it positive currents}: $T \in P_p(X) \iff T(\Omega) \geq 0\ \ \forall \ \Omega \in P^p(X)$. 

{\it strongly positive currents}: $T \in SP_p(X) \iff T(\Omega) \geq 0\ \ \forall \ \Omega \in WP^p(X)$. 
\end{defn}
\medskip

 {\bf Notation.} $\Omega \geq 0$ denotes that $\Omega$ is weakly positive;  $\Omega > 0$ denotes that $\Omega$ is transverse; $T \geq 0$ means that $T$ is strongly positive. Thus:
 
 {\bf 2.4.1 Claim.} $\Omega > 0$  if and only if $T(\Omega) > 0$ for every $T \geq 0, T \neq 0.$
 \medskip

 {\bf Remarks.}  Obviously the previous cones of currents satisfy:
 $SP_p(X) \subseteq  P_p(X) \subseteq WP_p(X)$. The classical positivity for currents (i.e. \lq\lq positive in the sense of Lelong\rq\rq) is strong positivity; Demailly (\cite{De}, Definition III.1.13) does not consider $P_p(X)$, and indicates $WP_p(X)$ as the cone of positive currents; there is no uniformity of notation in the papers of Alessandrini and Bassanelli.

Moreover, let us recall that, if  $f$ is a holomorphic map, and  $T \geq 0$, then  $f_* T \geq 0$.

 \bigskip
 
The differential operators $d, \de, \db$ extends naturally to currents by duality; thus we have two De Rham complexes, $(\E^*, d)$ and $((\D')^*, d)$; but the embedding $i: (\E^*, d) \to ((\D')^*, d)$ induces an isomorphism at the cohomology  level. This fact applies also to other cohomologies, as  Bott-Chern and  Aeppli. Since the notation has changed during the last 50 years, we recall them below:

$$H_{\ddb}^{k,k}(X, \R) = \Lambda_\R ^{k,k}(X) = H_{BC}^{k,k}(X, \R) :=\frac{\{ \varphi \in {\E}^{k,k}(X)_\R;
d\varphi =0\}}{\{i\partial\overline{\partial}\alpha ;\alpha \in {\E}^{k-1,k-1}(X)_\R\}}$$
$$H_{\de + \db}^{k,k}(X, \R) =V_\R ^{k,k}(X) = H_{A}^{k,k}(X, \R) :=\frac{\{ \varphi \in {\E}^{k,k}(X)_\R;
i\ddb\varphi =0\}}{\{\varphi = \de \overline\alpha + \db \alpha ; \alpha \in {\E}^{k,k-1}(X)\}}$$
$$H_{d} ^{k,k}(X, \R) :=\frac{\{ \varphi \in {\E}^{k,k}(X)_\R;
d\varphi =0\}}{\{  \varphi \in {\E}^{k,k}(X)_\R; \varphi = d\eta; \eta \in {\E}^{2k-1}(X)_\R\}}$$
$$H_{DR}^{j}(X, \R) :=\frac{\{ \zeta \in {\E}^{j}(X)_\R;
d\zeta =0\}}{\{  \zeta \in {\E}^{j}(X)_\R; \zeta = d\eta; \eta \in {\E}^{j-1}(X)_\R\}} .$$

\medskip
In general, when the class of a form or a current vanishes in one of the previous cohomology groups, we say that the form or the current \lq\lq bounds\rq\rq or is \lq\lq exact\rq\rq. 
\bigskip

  \section{Generalized $p-$K\"ahler conditions on compact manifolds}

We introduced  $p-$K\"ahler manifolds in \cite{AA} and then in \cite{AB1}, and studied them mainly in the compact case:
$p-$K\"ahler manifolds enclose K\"ahler and balanced manifolds, and seem to be the better generalization of the K\"ahler setting. Later on, also pluriclosed (SKT) manifolds have been proposed as a good generalization of K\"ahler manifolds.

Thus a deep investigation of this type of structures (no more metrics, in general) was needed: we proposed in \cite{A1} a general setting, those of 
{\it generalized $p-$K\"ahler manifolds}, which enclose all the known classes of non-K\"ahler manifolds that can be characterized by a transverse 
\lq\lq closed\rq\rq form. In the last years, some of them have been studied (not with the same name!) by other authors: hence we give in Remark 3.1.2  a sort of  dictionary; moreover, a brief survey of the whole history can be seen looking at the proofs of the suitable Characterization Theorems, as we indicate in the Remarks after Theorem 3.2.

\medskip

 \begin{defn} Let $X$ be a complex manifold of dimension $n \geq 2$, let $p$ be an integer, $1 \leq p \leq n-1$.
 
\begin{enumerate}
\item $X$ is a {\it $p-$K\"ahler (pK) manifold} if it has a closed transverse (i.e. strictly weakly positive) $(p,p)-$form $\Omega \in \E^{p,p}(X)_{\R}$. 

\item $X$ is a {\it weakly $p-$K\"ahler (pWK) manifold} if it has a transverse $(p,p)-$form $\Omega$ with $\de \Omega = \ddb \alpha$ for some form $\alpha$.

\item $X$ is a {\it $p-$symplectic (pS) manifold} if it has a closed transverse  real $2p-$form $\Psi \in \E^{2p}(X)$; that is, $d \Psi = 0$ and $\Omega := \Psi^{p,p}$ (the  $(p,p)-$component of $\Psi$) is transverse.

\item  $X$ is a {\it $p-$pluriclosed  (pPL) manifold} if it has a transverse $(p,p)-$form $\Omega$ with $\ddb \Omega = 0.$
\end{enumerate}

\end{defn}

Notice that:
$pK \Longrightarrow pWK  \Longrightarrow pS  \Longrightarrow pPL;$ 
as regards examples and differences under these classes of manifolds, see \cite{A1}, \cite{A2}, \cite{A3}.

When $X$ satisfies one of these definitions, it is called a {\bf generalized $p-$K\"ahler manifold}; the form $\Omega$, called a generalized  \pkk form, is said to be \lq\lq closed\rq\rq.

\medskip

{\bf 3.1.1 Remark. } As regards Definition 3.1(3), let us write the condition $d \Psi = 0$ in terms of a condition on $\de \Omega$, as in the other statements; 
when $\Psi = \sum_{a+b=2p} \Psi^{a,b},$ then $d \Psi = 0$ is  equivalent to:

i) $\db \Psi^{n-j,2p-n+j} + \de \Psi^{n-j-1,2p-n+j+1}=0$, for $j=0, \dots, n-p-1$, when $n\leq 2p$

and

ii) $\de \Psi^{2p,0}=0, \ \db \Psi^{2p-j,j} + \de \Psi^{2p-j-1,j+1}=0$, for $j=0, \dots, p-1$, when $n > 2p.$

In particular, $\de \Omega = \de \Psi^{p,p}= - \db \Psi^{p+1,p-1}$ (which is the only condition when $p=n-1$, as remarked also in \cite{Po1}).
\medskip

{\bf 3.1.2 Remark. } 1PL corresponds to pluriclosed (\cite{Eg}) or SKT (\cite{FT2}); 1S to hermitian symplectic (\cite{ST}), 1K to K\"ahler.
Moreover, $(n-1)$PL manifolds (or metrics) are called standard or Gauduchon; $(n-1)$S corresponds to strongly Gauduchon (\cite{Po1}, \cite{X}), $(n-1)$WK manifolds are called superstrong Gauduchon (\cite{PU}), $(n-1)$K corresponds to balanced (\cite{Mi}).

\medskip

Let us go to the Characterization Theorem. As in the work of Harvey and Lawson \cite{HL}, some questions arise about the natural operators as $i \ddb, d, \de + \db$: do they have closed range? Let us recall how the problem is solved in \cite{HL} when $M$ is compact, to emphasize the crucial points of the general case. The authors prove in Section 2 that, when $M$ is compact:

\begin{enumerate}
\item For every $p$, $dim H^p(M, \H) < \infty$, where $\H$ is the sheaf of germs of pluriharmonic functions; this is due to the finite dimensionality of $H^j(M, \R)$ and $H^j(M, \O)$, using the exact sequence (4.11) in Section 4.

\item The image of $d : {\E}^{1,1}(M)_\R \to {Z}^{1,1}(M)_\R = \{ \psi \in ({\E}^{2,1}(M) \oplus {\E}^{1,2}(M))_\R / d \psi = 0 \}$ has 
finite codimension in ${Z}^{1,1}(M)_\R$, because $H^2(M, \H)  \simeq {Z}^{1,1}(M)_\R / d {\E}^{1,1}(M)_\R$. This fact is due to the cohomology sequences coming from the exact sequence of sheaves (4.1) in Section 4.

\item The operator $d : {\E}^{1,1}(M)_\R \to ({\E}^{2,1}(M) \oplus {\E}^{1,2}(M))_\R$ has closed range, since by (2) the image of $d$ is closed in ${Z}^{1,1}(M)_\R$.

\item On currents, let $\pi : {\E}_{2}'(M)_{\R} \to {\E}_{1,1}'(M)_{\R}$ be the natural projection; the operator 
$d_{1,1} : ({\E}_{2,1}'(M) \oplus {\E}_{1,2}'(M))_\R \to {\E}_{1,1}'(M)_\R$ given by $d_{1,1} = \pi \circ d$ restricted to
$({\E}_{2,1}'(M) \oplus {\E}_{1,2}'(M))_\R$, is the adjoint operator to  $d : {\E}^{1,1}(M)_\R \to ({\E}^{2,1}(M) \oplus {\E}^{1,2}(M))_\R$, so that it has closed range (see (4.8) and Section 4).

\item Thus $Im d_{1,1}$, that is, the space of currents which are $(1,1)-$components of a boundary, is closed in ${\E}_{1,1}'(M)_{\R}$.
\end{enumerate}

\medskip
We shall develop these steps  to get the proof of the general Characterization Theorem. Thus, in the same vein, we prove:

\begin{thm} 

\begin{enumerate}
\item {\bf Characterization of compact $p-$K\"ahler (pK) manifolds.}

$M$ has a strictly weakly positive $(p,p)-$form $\Omega$ with $\de \Omega = 0$,  if and only if $M$ has no strongly positive currents $T \neq 0$, of bidimension $(p,p)$, such that $T = \de  \overline S + \db S$ for some current $S$ of bidimension $(p,p+1)$ (i.e.  $T$  is the $(p,p)-$component of a boundary). 

\item {\bf Characterization of compact weakly $p-$K\"ahler (pWK) manifolds.}

$M$ has a strictly weakly positive  $(p,p)-$form $\Omega$ with $\de \Omega = \ddb \alpha$ for some form $\alpha$,  if and only if $M$ has no strongly  positive currents $T \neq 0$, of bidimension $(p,p)$, such that $T = \de  \overline S + \db S$ for some current $S$ of bidimension $(p,p+1)$ with $\ddb S = 0$ (i.e.  $T$  is closed and is the $(p,p)-$component of a boundary). 

\item {\bf Characterization of compact $p-$symplectic (pS) manifolds.}

$M$ has a real $2p-$form $\Psi = \sum_{a+b=2p} \Psi^{a,b}$, such that $d \Psi = 0$ and the 
$(p,p)-$form $\Omega := \Psi^{p,p}$ is 
strictly weakly positive,   if and only if $M$ has no strongly  positive currents $T \neq 0$, of bidimension $(p,p)$, such that $T = d R$ for some current $R$  (i.e.  $T$  is a boundary, that is, $T = \de  \overline S + \db S$ with $\de S = 0$). 

\item {\bf Characterization of  compact $p-$pluriclosed (pPL) manifolds.}

$M$ has a strictly weakly positive $(p,p)-$form $\Omega$ with $\ddb \Omega = 0$,  if and only if $M$ has no strongly  positive currents $T \neq 0$, of bidimension $(p,p)$, such that $T = \ddb A$ for some current $A$ of bidimension $(p+1,p+1)$. 
\end{enumerate}
\end{thm}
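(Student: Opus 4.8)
The four statements follow one and the same scheme: a Hahn--Banach separation between the convex cone $SP_p(M)$ of strongly positive currents of bidimension $(p,p)$ and a suitable vector subspace $\B_i$ of \lq\lq exact\rq\rq\ currents, the only delicate point being that $\B_i$ is \emph{closed}. One implication is elementary. Suppose $T\neq0$ is a strongly positive current of bidimension $(p,p)$ belonging to the prescribed class, and let $\Omega$ be the transverse \lq\lq closed\rq\rq\ form provided by the hypothesis (in case (3), $\Omega:=\Psi^{p,p}$, where $\Psi$ is the closed $2p$-form whose $(p,p)$-component is transverse). Integrating by parts and using the relation satisfied by the form ($\de\Omega=0$ in (1); $\de\Omega=\ddb\alpha$ in (2); $d\Psi=0$ in (3); $\ddb\Omega=0$ in (4)) together with the constraint on the potential current ($\ddb S=0$ in (2), $\de S=0$ in (3)), one gets $T(\Omega)=0$ (in case (3), $T(\Psi)=T(\Psi^{p,p})=0$, since $T$, having bidimension $(p,p)$, annihilates the other components of $\Psi$). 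But $\Omega$ is transverse and $T\geq0$, $T\neq0$, so $T(\Omega)>0$ by Claim 2.4.1: a contradiction.

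For the converse, fix a hermitian metric on $M$ with fundamental form $\omega$ and put $B:=\{T\in SP_p(M):T(\omega^p)=1\}$. This is a weak-$*$ closed convex subset of $\E_{p,p}'(M)_{\R}$ with $0\notin B$, and since $M$ is compact and strong positivity bounds the mass of $T$ by a multiple of $T(\omega^p)$, $B$ is weak-$*$ compact (Banach--Alaoglu). Let $\B_i$ be the relevant subspace: in case (1) the $(p,p)$-components of boundaries, in case (2) the same but with the extra requirement $\ddb S=0$, in case (4) the $\ddb$-exact currents --- all sitting in $\E_{p,p}'(M)_{\R}$ --- and in case (3) the boundaries $dR$, now taken in the larger space $\E_{2p}'(M)_{\R}$ of real currents of dimension $2p$ (with $B$ replaced by its image there). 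In each case $\B_i$ is the range of the transpose of an explicit differential operator $L_i$ between Fr\'echet spaces of smooth forms: essentially $d$ on $(p,p)$-forms in (1), $d$ on $2p$-forms in (3), $\ddb$ on $(p,p)$-forms in (4), and $d$ coupled with the $\ddb$-closedness condition in (2). Granting that each $L_i$ has \emph{closed range} --- so that, by the duality between an operator and its transpose on such spaces, $\B_i$ is closed --- the hypothesis \lq\lq no such current $T$\rq\rq\ reads exactly $B\cap\B_i=\varnothing$. The Hahn--Banach separation theorem then produces a weak-$*$ continuous functional, that is, a smooth real form $\Omega$ (resp.\ $\Psi\in\E^{2p}(M)_{\R}$ in case (3)), vanishing on $\B_i$ and strictly positive on the compact set $B$. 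Vanishing on $\B_i$ translates, by integration by parts, into precisely the required closeness condition (in case (3) it gives $d\Psi=0$, and one sets $\Omega:=\Psi^{p,p}$); strict positivity on $B$ forces $T(\Omega)>0$ for every nonzero $T\in SP_p(M)$, hence $\Omega$ transverse by Claim 2.4.1. This yields the desired generalized \pkk form.

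The heart of the matter --- and the step I expect to be the main obstacle --- is thus the closed-range property of each $L_i$; it is the exact analogue, for general $p$ and for all four \lq\lq closeness\rq\rq\ operators, of steps (1)--(5) in the Harvey--Lawson argument recalled above. One shows that $\mathrm{Im}\,L_i$ has finite codimension in the space of \lq\lq closed\rq\rq\ forms, because that codimension is isomorphic to a cohomology group which is finite-dimensional on the compact $M$: de Rham cohomology $H_{DR}^{\bullet}(M,\R)$ in case (3), Bott--Chern- and Aeppli-type groups ($H_{BC}^{p,p}(M,\R)$, $H_A^{p,p}(M,\R)$) in cases (1) and (4), and a \lq\lq mixed\rq\rq\ group in case (2); finite-dimensionality follows from the cohomology sequences attached to the exact sequences of sheaves of Section 4 (built from $\O$, the sheaf $\H$ of germs of pluriharmonic functions, the sheaves of holomorphic forms, and so on), together with the finiteness of $\dim H^{j}(M,\R)$ and $\dim H^{j}(M,\O)$. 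Finite codimension forces the range to be closed, hence $L_i$ has closed range, hence so does its transpose, hence $\B_i$ is closed, and the separation above applies. Apart from this closed-range input --- and the subsidiary bookkeeping of pinning down $L_i$, its transpose, and the cohomology measuring the codimension in each of the four cases --- the argument is soft functional analysis and integration by parts.
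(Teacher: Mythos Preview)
Your proposal is correct and follows exactly the paper's strategy: separate the compact convex base $\tilde P(M)=\{T\in SP_p(M):T(\omega^p)=1\}$ from a closed linear subspace of ``exact'' currents, reduce closedness to finite codimension, and obtain finite codimension from sheaf cohomology via the resolution (4.2) of $\H$.

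Two places where what you call ``subsidiary bookkeeping'' is in fact the substance of the argument. First, in case~(1) the quotient controlling $\mathrm{Im}\,L_1$ is \emph{not} $H_{BC}^{p,p}$ or $H_A^{p,p}$: with $q=p-1$, the operator is $\sigma_{2q+1}=d:\E^{p,p}_\R\to(\E^{p+1,p}\oplus\E^{p,p+1})_\R$, and one needs $\dim\frac{\ker\sigma_{2q+2}}{\mathrm{Im}\,\sigma_{2q+1}}<\infty$, a non-standard group whose finiteness genuinely requires chasing the full resolution (4.2) through the sheaves $\L^j$ and $\B^j$ down to $H^j(M,\H)$ and then to $H^j(M,\O)$ and $H^j(M,\R)$ via (4.11); this is the content of Proposition~4.6. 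Second, in case~(2) the space $\B_2=\mathrm{Im}\,\sigma_{2q+1}'\cap\ker\sigma_{2p-1}'$ is not the range of a single transpose. Its closedness is easy once $\mathrm{Im}\,\sigma_{2q+1}'$ is closed, but after separation you land in $\B_2^\perp$, and identifying this with $\ker\sigma_{2q+1}+\mathrm{Im}\,\sigma_{2p-1}$ requires that this \emph{sum} of closed subspaces be closed --- which is not automatic and is handled in the paper (Remark~4.5.1) via Lemma~4.3, using that $\mathrm{Im}\,\sigma_{2p-1}$ has finite codimension in $\ker\sigma_{2p}$ (i.e.\ $\dim H_A^{p,p}(M,\R)<\infty$).
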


{\bf Remarks.}  

Theorem 3.2(1) for $p=1$ was proved  in \cite{HL}, Theorem 14;

Theorem 3.2(1) for $p=n-1$ was proved   in \cite{Mi}, Theorem 4.7;

Theorem 3.2(1) for a generic $p$ was proved   in \cite{AA}, Theorem 1.17;

Theorem 3.2(2) for $p=1$ was proved in \cite{HL}, Theorem 38; in fact, Theorem 3.2(2) is related to a question posed by Harwey and Lawson in their paper (Section 5 in \cite{HL}), about the use of {\it closed} currents in  characterization theorems (this is important because closed positive currents are flat in the sense of Federer).

Theorem 3.2(3)  for $p=1$ was proved in \cite{Su}, Theorems III.2 and III.11;

Theorem 3.2(3) for a generic $p$ was proved   in \cite{AA}, Theorem 1.17;

Theorem 3.2(3) for $p=n-1$ is proved also in \cite{Po1}, Proposition 3.3.

Theorem 3.2(4) for $p=1$ is proved in \cite{Eg}, Theorem 3.3.

Recall also a result of Gauduchon (\cite{Ga}) (for $p=n-1$), who proved that every compact $n-$dimensional manifold is $(n-1)PL$.
As a matter of fact, this result is now a corollary of the previous Theorem, since for $p=n-1$, the current $A$ in Theorem 3.2(4) reduces to a plurisubharmonic global function on a compact complex manifold, hence to a constant. Such a metric is also called a standard (or Gauduchon) metric.

We shall give a complete proof of all statements in the next section.
\bigskip

  \section{Proof of the Characterization Theorem 3.2}

Let us firstly recall some well-known facts about Fr\'echet topological vector spaces 
and Fr\'echet sheaves that we shall use here and in Section 5 and 8.

\begin{lemma} {\rm (see  \cite{Sc} IV.7.7)} Let $L , M$ be Fr\'echet 
spaces, and let $f : L \to M$ be a continuous linear map. Then $f$ is a
topological homomorphism if and only if $f$ has closed range, that is, 
if and only if  $M \over {Im f}$ is a Hausdorff (hence Fr\'echet) t.v.s. 
\end{lemma}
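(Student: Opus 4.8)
The statement is a standard consequence of the open mapping theorem for Fr\'echet spaces, so the plan is to organize the argument around that theorem and its quotient version. First I would recall the relevant definitions: a continuous linear map $f:L\to M$ is a topological homomorphism if the induced bijection $\tilde f : L/\ker f \to \operatorname{Im} f$ is an isomorphism of topological vector spaces, where $\operatorname{Im} f$ carries the subspace topology from $M$. Since $L$ is Fr\'echet and $\ker f$ is closed (by continuity of $f$), the quotient $L/\ker f$ is again a Fr\'echet space, and $\tilde f$ is always a continuous linear bijection onto $\operatorname{Im} f$. The content of the lemma is thus the equivalence: $\tilde f$ is a homeomorphism $\iff$ $\operatorname{Im} f$ is closed in $M$ $\iff$ $M/\operatorname{Im} f$ is Hausdorff.

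For the direction ``closed range $\Rightarrow$ topological homomorphism'': if $\operatorname{Im} f$ is closed in the Fr\'echet space $M$, then $\operatorname{Im} f$ is itself a Fr\'echet space in the subspace topology. Now $\tilde f : L/\ker f \to \operatorname{Im} f$ is a continuous linear bijection between Fr\'echet spaces, so by the open mapping theorem it is open, hence a topological isomorphism; this is exactly the assertion that $f$ is a topological homomorphism. For the converse, if $f$ is a topological homomorphism, then $\operatorname{Im} f \cong L/\ker f$ is complete (as the continuous isomorphic image of a complete metrizable space), and a complete subspace of the Hausdorff space $M$ is necessarily closed. Finally, the equivalence ``$\operatorname{Im} f$ closed $\iff$ $M/\operatorname{Im} f$ Hausdorff'' is the elementary fact that in a topological vector space a linear subspace is closed precisely when the quotient by it is Hausdorff (its $T_1$/Hausdorff property being detected by the closure of $\{0\}$, i.e. of the subspace); and when $\operatorname{Im} f$ is closed the quotient $M/\operatorname{Im} f$ is metrizable and complete, hence Fr\'echet.

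The only genuine input is the open mapping theorem for Fr\'echet spaces, together with the stability of the Fr\'echet property under passing to closed subspaces, quotients by closed subspaces, and the observation that continuous images of complete spaces under topological isomorphisms are complete. Since the paper cites Schaefer (\cite{Sc} IV.7.7) for the statement, I would simply reproduce this short chain of implications rather than reprove the open mapping theorem; there is no real obstacle here, the ``hard part'' being only to state cleanly which topology $\operatorname{Im} f$ is equipped with so that the phrase ``topological homomorphism'' is unambiguous.
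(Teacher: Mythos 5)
Your argument is correct, and it is the standard proof of this fact. Note that the paper itself offers no proof at all: Lemma 4.1 is simply quoted from Schaefer (\cite{Sc} IV.7.7), where it appears as part of the closed-range/homomorphism theorem in the duality chapter. Your self-contained route — reduce to the induced continuous bijection $\tilde f\colon L/\ker f\to \operatorname{Im} f$, apply the open mapping theorem when $\operatorname{Im} f$ is closed (hence Fr\'echet), and for the converse transfer completeness from $L/\ker f$ to $\operatorname{Im} f$ and use that a complete subspace of a Hausdorff t.v.s.\ is closed — establishes exactly the three-way equivalence the paper uses, so it is a perfectly adequate substitute for the citation. The only points worth stating explicitly, which you gesture at but should pin down, are that completeness is a uniform (not merely topological) property and is preserved under an isomorphism of topological vector spaces because such an isomorphism is automatically uniformly continuous in both directions, and that ``$\operatorname{Im} f$ closed $\iff M/\operatorname{Im} f$ Hausdorff'' is the general fact that the quotient of a t.v.s.\ by a linear subspace is Hausdorff precisely when the subspace is closed. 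With those two remarks made, the proof is complete and needs nothing beyond the open mapping theorem for Fr\'echet spaces.
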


\begin{lemma} {\rm (see  \cite{Se} page 21)} Let $L , M$ be Fr\'echet 
spaces, and let $f : L \to M$ be a continuous linear map whose image has 
finite codimension. Then $f$ is a topological homomorphism
 (i.e. $M \over {Im f}$ is Hausdorff,  i.e. $Im f$ is closed in $M$).
\end{lemma}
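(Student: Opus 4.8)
The plan is to deduce the statement from the open mapping theorem for Fréchet spaces. Note first that finite codimension of $\mathrm{Im}\,f$ by itself does \emph{not} force it to be closed --- the kernel of a discontinuous linear functional on a Banach space is a dense hyperplane --- so the hypotheses that $f$ is \emph{continuous} and that its domain is \emph{Fréchet} must enter in an essential way. To set things up I would write $d := \dim(M/\mathrm{Im}\,f) < \infty$ and choose a finite-dimensional subspace $F \subseteq M$ that is an algebraic complement of $\mathrm{Im}\,f$, so that $M = \mathrm{Im}\,f \oplus F$ as vector spaces. Since $F$ is finite-dimensional it carries a unique Hausdorff vector space topology and is complete, hence the direct sum $L \oplus F$ is again a Fréchet space.

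Next I would introduce the continuous linear map
$$\tilde f : L \oplus F \longrightarrow M , \qquad \tilde f(x,v) := f(x) + v ,$$
which is surjective by the choice of $F$. By the open mapping theorem (cf. \cite{Sc}), a continuous linear surjection between Fréchet spaces is open, so $\tilde f$ is a topological homomorphism and induces a topological isomorphism $(L \oplus F)/\ker \tilde f \cong M$. One then identifies $\ker \tilde f$: if $f(x) + v = 0$ then $v \in \mathrm{Im}\,f \cap F = \{0\}$, whence $v = 0$ and $x \in \ker f$; therefore $\ker \tilde f = \ker f \oplus \{0\}$. As $f$ is continuous, $\ker f$ is closed in $L$, and quotienting the product $L \oplus F$ by the product subspace $\ker f \oplus \{0\}$ yields, topologically, $(L/\ker f) \oplus F$.

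Combining these observations, $M$ becomes topologically isomorphic to $(L/\ker f) \oplus F$ in such a way that $\mathrm{Im}\,f$ is carried onto the subspace $(L/\ker f) \oplus \{0\}$, which is visibly closed; hence $\mathrm{Im}\,f$ is closed in $M$, and Lemma 4.1 then gives at once that $f$ is a topological homomorphism, as claimed. I do not expect a genuine obstacle here: the one point requiring care is not to presuppose closedness of the finite-codimensional subspace before the open mapping theorem is invoked --- that is precisely the circularity the argument sidesteps by passing to the enlarged, honestly surjective map $\tilde f$. An alternative and essentially equivalent route is to induct on $d$, adjoining a single one-dimensional complement at each step.
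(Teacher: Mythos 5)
Your proof is correct. The paper itself gives no argument for this lemma — it simply cites Serre's \emph{Un th\'eor\`eme de dualit\'e}, p.~21 — and your route (choosing a finite-dimensional algebraic complement $F$ of $\mathrm{Im}\,f$, applying the open mapping theorem to the continuous surjection $\tilde f: L \oplus F \to M$, $(x,v)\mapsto f(x)+v$, and reading off that $\mathrm{Im}\,f$ corresponds to the closed subspace $(L/\ker f)\oplus\{0\}$) is essentially the classical argument in that reference, so there is nothing to add.
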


\begin{lemma} Let $L , M$ be Fr\'echet 
spaces, let $f : L \to M$ be a continuous surjective linear map. Let $N$ be a 
closed subspace of $L$ with finite codimension. Then $f(N)$ is closed.
\end{lemma}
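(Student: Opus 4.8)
The plan is to reduce the statement to the two preceding lemmas by combining the surjectivity of $f$ with the finite-codimension hypothesis on $N$. First I would consider the restriction $f|_N : N \to M$. Since $N$ is closed in the Fréchet space $L$, it is itself a Fréchet space, and $f|_N$ is a continuous linear map between Fréchet spaces; its image is $f(N)$. By Lemma 4.1, to show $f(N)$ is closed it suffices to show that $f|_N$ is a topological homomorphism, or equivalently — by Lemma 4.2 — it suffices to show that $f(N)$ has finite codimension in $M$.

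So the heart of the argument is the purely algebraic observation that $f(N)$ has finite codimension in $M$. Here I would argue as follows: let $W \subseteq L$ be a finite-dimensional subspace with $L = N + W$ (which exists because $N$ has finite codimension). Applying $f$ and using surjectivity, $M = f(L) = f(N) + f(W)$, and $f(W)$ is finite-dimensional since it is the image of a finite-dimensional space. Hence $M / f(N)$ is spanned by the image of $f(W)$, so $\dim \bigl( M / f(N) \bigr) \le \dim W < \infty$. This gives the finite-codimension condition, and Lemma 4.2 then yields that $f(N)$ is closed (indeed that $f|_N$ is a topological homomorphism), completing the proof.

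I do not expect a serious obstacle here; the only point requiring a little care is that one must invoke the Fréchet property of $N$ (as a closed subspace of a Fréchet space) in order to apply Lemma 4.1 or Lemma 4.2 to the map $f|_N$, and one must note that $f|_N$ remains continuous for the subspace topology. Everything else is elementary linear algebra. If one prefers to avoid even citing Lemma 4.2, one can instead finish via Lemma 4.1 directly: $M / f(N)$ is finite-dimensional, hence Hausdorff, hence $f|_N$ is a topological homomorphism and $f(N) = \operatorname{Im}(f|_N)$ is closed.
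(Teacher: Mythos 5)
Your proposal is correct and follows essentially the same route as the paper: restrict $f$ to the Fréchet space $N$, observe that $f(N)$ has finite codimension in $M$ (the paper does this via the induced surjection $L/N \to M/f(N)$, you via a finite-dimensional complement $W$ — the same elementary fact), and conclude by Lemma 4.2.
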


\begin{proof} Consider the induced map $g : {L\over N}  \to {M \over {f(N)}}$
which is surjective: hence $f(N)$ has finite codimension in $M$. Now $N$ is a Fr\'echet space,
 and $f\vert _N : N \to M$ satisfies Lemma 4.2, thus $ M \over {f(N)}$ is Hausdorff.
\end{proof}

\begin{thm} {\rm (Hahn-Banach Theorem, see \cite{Sc}, Theorem II.3.1)} Let $E$ be a topological vector space, let $F$ be a linear manifold in $E$, and let $A$ be a non-empty convex open subset of $E$, not intersecting $F$. There exists a closed hyperplane in $E$, containing $F$ and not intersecting $A$.
\end{thm}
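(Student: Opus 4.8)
The plan is to deduce this geometric (separation) form of Hahn--Banach from the classical extension theorem for real linear functionals dominated by a sublinear functional, using the Minkowski gauge, and to encode both requirements --- that the hyperplane contain $F$ and avoid $A$ --- into a single open convex set that omits the origin.

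First I would carry out two reductions. If $E$ is complex, regard it as a real t.v.s. $E_\R$; a complex linear manifold is in particular a real one, and a real-linear functional $u$ produced below will be promoted at the end to the complex-linear functional $f(x):=u(x)-i\,u(ix)$, whose kernel (a complex hyperplane) is closed as soon as $u$ is continuous. If $F$ is affine, fix $x_1\in F$ and translate by $-x_1$: then $F-x_1$ is a vector subspace, $A-x_1$ is still open, convex and disjoint from it, and any closed hyperplane found for the translated data translates back to one with the required properties. So from now on I assume real scalars and that $F$ is a vector subspace, and I seek a nonzero \emph{continuous} linear $u$ with $u|_F=0$ and $u>0$ on $A$; then $H:=\{u=0\}$ is the desired closed hyperplane.

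The crucial construction is $C:=A+F$. Since $F=-F$, this is $\bigcup_{f\in F}(A+f)$, a union of translates of the open convex set $A$, hence open and convex; and $0\notin C$, for $0=a+f$ would give $a=-f\in A\cap F=\emptyset$. Now I separate $0$ from $C$. Picking $b\in C$, the set $U:=C-b$ is an open convex neighbourhood of $0$, so its Minkowski gauge $p(x):=\inf\{t>0:\,x/t\in U\}$ is finite (as $U$ is absorbing), sublinear, and satisfies $\{p<1\}=U$. Because $0\notin C$, the point $-b$ lies outside $U$, so $p(-b)\ge 1$; defining $g(t(-b)):=t$ on the line $\R(-b)$ one checks $g\le p$ there, and the classical Hahn--Banach extension theorem yields a linear $u_0\le p$ on all of $E$ extending $g$, with $u_0(-b)=1$, i.e. $u_0(b)=-1$. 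For $x\in C$ one has $u_0(x-b)\le p(x-b)<1$, so $u_0(x)<1+u_0(b)=0$; setting $u:=-u_0$ gives $u>0$ on $C\supseteq A$. Moreover $-1<u<1$ on the neighbourhood $U\cap(-U)$ of $0$, so $u$ is continuous.

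It remains to read off the two properties. For $a\in A$ and $f\in F$ one has $a+tf\in C$ for every $t\in\R$, so $u(a)+t\,u(f)=u(a+tf)>0$ for all real $t$; letting $t\to\pm\infty$ forces $u(f)=0$, whence $u|_F=0$, while $t=0$ gives $u>0$ on $A$. Thus $H=\{u=0\}$ is a closed hyperplane containing $F$ and missing $A$; in the complex case the promotion $f(x)=u(x)-i\,u(ix)$ finishes the argument, since $iF=F$ makes $u|_F=0$ force $f|_F=0$, while $\mathrm{Re}\,f=u>0$ on $A$ gives $f\neq0$ on $A$. The main obstacle I anticipate is not the algebra but ensuring the hyperplane is \emph{closed}: this is exactly where the openness of $A$ is indispensable, as it makes the gauge $p$ bounded near $0$ and thereby forces continuity of $u$. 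The second delicate point is the device $C=A+F$, which packages the two conclusions ``$H\supseteq F$'' and ``$H\cap A=\emptyset$'' into the single condition $0\notin C$.
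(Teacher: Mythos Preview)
The paper does not prove this statement at all: Theorem~4.4 is quoted verbatim from Sch\"afer (\cite{Sc}, Theorem~II.3.1) as a black box, to be used later in the proofs of Theorems~5.1 and~8.1. There is therefore no ``paper's own proof'' to compare against.

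That said, your argument is a correct and standard proof of the geometric Hahn--Banach theorem. The key device $C:=A+F$ is exactly the right one: it reduces the two requirements (contain $F$, miss $A$) to the single condition $0\notin C$, and the Minkowski gauge of a translate of $C$ then produces the separating functional. Your verification that $u$ is continuous (boundedness on $U\cap(-U)$) is the point where openness of $A$ is genuinely used, and your handling of the complex case via $f(x)=u(x)-iu(ix)$ is the usual promotion. One tiny remark: in the affine reduction you should note that the hyperplane you find through the origin for the translated data is then translated back by $+x_1$, so the resulting closed hyperplane is affine (not through $0$) in general --- which is consistent with the statement, since a ``closed hyperplane'' in this context means a closed affine subspace of codimension one.
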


\begin{thm} {\rm Separation  Theorem, see \cite{Sc}, Theorem II.9.2)} Let $E$ be a locally convex space, let $A,B$  non-empty disjoint convex subsets of $E$, such that $A$ is closed and $B$ is compact. There exists a closed hyperplane in $E$, strictly separating $A$ and $B$.
\end{thm}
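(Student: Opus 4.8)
The plan is to deduce strict separation from the geometric Hahn--Banach Theorem 4.4 by the classical device of passing to the difference set. First I would set $D := A - B = \{a - b : a \in A,\ b \in B\}$. Since $A$ and $B$ are convex, $D$ is convex, and since $A \cap B = \emptyset$ we have $0 \notin D$. The crucial preliminary point is that $D$ is \emph{closed}, and this is exactly where the compactness of $B$ enters: if $x$ lies in the closure of $D$, take a net with $a_\alpha - b_\alpha \to x$; by compactness of $B$ a subnet of $(b_\alpha)$ converges to some $b \in B$, whence the corresponding subnet of $(a_\alpha)$ converges to $x + b$, which lies in $A$ because $A$ is closed, so $x = (x+b) - b \in D$.

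Next I would exploit local convexity to create room for a strict inequality. Since $D$ is closed and $0 \notin D$, its complement is an open neighbourhood of $0$, so there is an open, convex, balanced neighbourhood $U$ of $0$ with $U \cap D = \emptyset$. I then form the thickened set $G := D + \tfrac12 U$, which is open (a union of translates of the open set $\tfrac12 U$) and convex (a sum of convex sets). A short check shows $0 \notin G$: if $0 = d + \tfrac12 u$ with $d \in D$, $u \in U$, then $d = -\tfrac12 u \in U$ (as $U$ is balanced), contradicting $U \cap D = \emptyset$.

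Now I would apply Theorem 4.4 to the open convex set $G$ and the linear manifold $\{0\}$, which are disjoint. It produces a closed hyperplane through $0$, that is, the kernel of a nonzero continuous linear functional $\ell$ on $E$, disjoint from $G$. Since $G$ is convex and hence connected, $\ell$ keeps a constant sign on it; replacing $\ell$ by $-\ell$ if needed, we may assume $\ell(d) + \tfrac12 \ell(u) > 0$ for all $d \in D$ and $u \in U$. Because $U$ is open and balanced and $\ell$ is a nonzero (hence open) functional, $\ell(U)$ is an open symmetric interval $(-\delta,\delta)$ with $\delta > 0$; letting $\ell(u)$ approach $-\delta$ gives $\ell(d) \ge \tfrac\delta2 > 0$ for every $d \in D$. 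Translating back, $\ell(a) - \ell(b) = \ell(a-b) \ge \tfrac\delta2$ for all $a \in A$, $b \in B$, so
\[
\inf_{a \in A}\ell(a) \;\ge\; \sup_{b \in B}\ell(b) + \tfrac\delta2 ,
\]
and any value $c$ strictly between these two numbers yields the closed hyperplane $\{\ell = c\}$ strictly separating $A$ and $B$.

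I expect the main obstacle to be the two places where compactness and local convexity are genuinely used, rather than the formal invocation of Hahn--Banach. The closedness of $D = A - B$ can fail when $B$ is merely closed (two closed convex sets in an infinite-dimensional space may have a non-closed difference), so the compactness hypothesis must be used precisely as in the net argument above. The second delicate point is obtaining a \emph{strict} inequality: the bare Hahn--Banach theorem only separates, and the uniform gap $\tfrac\delta2$ arises exactly because $D$ was thickened by the neighbourhood $U$ before separating. Once these two points are settled, the remainder is the routine identification of closed hyperplanes through the origin with kernels of continuous linear functionals.
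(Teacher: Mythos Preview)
The paper does not give its own proof of Theorem~4.5; the statement is simply quoted from Sch\"afer's textbook as a standard tool. Your argument is correct and is essentially the classical textbook proof (indeed, the one found in \cite{Sc}, II.9.2): pass to the difference set $D=A-B$, use compactness of $B$ to see that $D$ is closed, thicken $D$ by a balanced convex neighbourhood $U$ of $0$ to produce an open convex set $G$ avoiding $0$, apply Theorem~4.4 to $G$ and the linear manifold $\{0\}$, and extract the uniform gap $\delta/2$ from the radius of $\ell(U)$. The two places you flag as delicate --- closedness of $D$ and the strictness of the inequality --- are handled correctly. There is nothing to compare against in the paper itself.
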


\medskip
Let us go now to the preliminaries of the proof of Theorem 3.2. 
\medskip

Let $X$ be a complex $n-$dimensional manifold;
for $n \geq p,q \geq 0$, consider
the spaces ${\E}^{p,q}(X)$, endowed with the usual topology of the uniform convergence on compact sets: they are Fr\'echet spaces. Their topological dual
spaces  (with the weak topology) are the spaces ${\E}_{p,q}'(X)$, and the pairing is denoted by $S(\alpha)$ or $(S, \alpha)$ for every 
$S \in {\E}_{p,q}'(X)$ and $\alpha \in {\E}^{p,q}(X)$. If $F \subset {\E}^{p,q}(X)$, $S \in F^{\perp}$ means that $(S, \alpha)=0$ for all $\alpha \in F$.

Moreover, we denote as usual by  ${\E}^{p,q}_\R$ the sheaf of germs of real $(p,q)-$forms, and by $\Omega^{j}$ the sheaf of germs of holomorphic $j-$forms.
\medskip

Notice that in \cite{HL} (Proposition 1) only the following resolution of the sheaf $\H$ is needed:
 \begin{equation}\label{1}
 0 \to \H \stackrel{j}{\to}  {\E}^{0,0}_\R   \stackrel{i \ddb}{\to}  {\E}^{1,1}_\R \stackrel{d}{\to} ({\E}^{2,1} \oplus {\E}^{1,2})_\R  
 \stackrel{d}{\to} {\E}^{4}(M)_\R \to \dots 
 \end{equation}
where $j$ is the standard inclusion.  On the contrary, our situation  is much more complicated, because it involves in the resolution of $\H$ also sheaves whose cohomology is not trivial (see f.i. \cite{ABL}, page 259; the notation stems mainly from \cite{Bi1} and \cite{Bi2} ).
\bigskip

We consider the following resolution of the sheaf $\H$, for $p > 0$:
 
  \begin{equation}\label{2}
0 \to \H \stackrel{\sigma_{-1}}{\to}  {\L}^{0}   \stackrel{\sigma_{0}}{\to}  \dots    {\L}^{p-1}   \stackrel{\sigma_{p-1}}{\to} {\B}^{p} \stackrel{\sigma_{p}}{\to} \dots {\B}^{2p-1} \stackrel{\sigma_{2p-1}}{\to} 
  \end{equation}
$${\E}^{p,p}_\R   \stackrel{\sigma_{2p}}{\to}  {\E}^{p+1,p+1}_\R \stackrel{\sigma_{2p+1}}{\to} ({\E}^{p+2,p+1} \oplus {\E}^{p+1,p+2})_\R  
 \stackrel{\sigma_{2p+2}}{\to} {\E}^{2p+4}_\R \to \dots .$$

Here, 

$\L^j := (\Omega^{j+1} \oplus ( \oplus_{k=0}^{j} \E^{j-k,k}) \oplus \overline  \Omega^{j+1})_\R$, for $0 \leq j \leq p-1$;

$\B^j := ( \oplus_{k=0}^{2p-j} \E^{p-k, j-p+k})_\R$, for $p \leq j \leq 2p-1$

\noindent and the maps are, respectively,

\begin{enumerate}
\item $\sigma_{-1}(h) = (- \de h , h, - \db h)$, 
 
\item for $0 \leq j \leq p-2$ (if $p > 1$):

$\sigma_{j} ( \varphi^{j+1}, \{\alpha^{j-k,k}\}_{k=0}^{j}, \overline \varphi^{j+1}) =$ 

$(- \de \varphi^{j+1}, \de \alpha^{j,0} + \varphi^{j+1}, \{\db \alpha^{k+1,j-k-1} + \de \alpha^{k,j-k}\}_{k=0}^{j-1}, \overline \varphi^{j+1} + 
\db \alpha^{0,j}, - \db \overline \varphi^{j+1})$;

\item $\sigma_{p-1} (\varphi^{p}, \{\alpha^{p-1-k,k}\}_{k=0}^{p-1}, \overline  \varphi^{p}) 
=$ 

$(\de \alpha^{p-1,0} + \varphi^{p}, \{\db \alpha^{p-1-k,k} + \de \alpha^{p-2-k,k+1}\}_{k=0}^{p-2}, \overline \varphi^{p} + 
\db \alpha^{0,p-1})$;
  
\item for $p \leq j \leq 2p-1$:
 $\sigma_{j} (\{\alpha^{p-k,j-p+k}\}_{k=0}^{2p-j}) = ( \{\db \alpha^{p-k,j-p+k} + \de \alpha^{p-k-1,j-p+k+1}\}_{k=0}^{2p-j-1})$;

\noindent (in particular, $\sigma_{2p-1}(\beta, \overline \beta) = (\db \beta + \ \de \overline \beta)$)

\item $\sigma_{2p} = i \ddb$;

\item $\sigma_{2p+1} = \sigma_{2p+2} = d$.
\end{enumerate}

\smallskip
Moreover, we shall denote by $d_{s}$ the operator $d$ acting on $s-$forms: ${\E}^{s}_\R   \stackrel{d_{s}}{\to}  {\E}^{s+1}_\R$.
\medskip

For instance, when $p = 1$, the exact sequence of sheaves (4.2) becomes
  \begin{equation}\label{3}
  0 \to \H \stackrel{\sigma_{-1}}{\to}  {\L}^{0}   \stackrel{\sigma_{0}}{\to}   {\B}^{1}  \stackrel{\sigma_{1}}{\to} 
  {\E}^{1,1}_\R   \stackrel{\sigma_{2}}{\to}  {\E}^{2,2}_\R \stackrel{\sigma_{3}}{\to} ({\E}^{3,2} \oplus {\E}^{2,3})_\R  
 \to \dots 
 \end{equation}
 
i.e.
  \begin{equation}\label{4}
  0 \to \H \stackrel{\sigma_{-1}}{\to}    ( \Omega^1 \oplus \E^{0,0} \oplus \overline \Omega^1)_{\R} \stackrel{\sigma_{0}}{\to} (\E^{1,0} \oplus \E^{0,1})_{\R} \stackrel{\sigma_{1}}{\to} \E^{1,1}_{\R} \stackrel{\sigma_{2}}{\to} \E^{2,2}_{\R} \stackrel{\sigma_{3}}{\to} ({\E}^{3,2} \oplus {\E}^{2,3})_\R \dots
 \end{equation}
 
 where the maps are, respectively,

$\sigma_{-1}(h) = (- \de h , h, - \db h)$, 
$\sigma_{0}(\varphi, f, \overline \varphi,) = ( \varphi + \de f , \db f + \overline \varphi),$ 

$ \sigma_{1}(\beta, \overline \beta) = (\db \beta + \ \de \overline \beta), \ \ \sigma_{2} = i \ddb \ \ \sigma_{3} = d$, and so on. 
\medskip
 
 When $p=0$, we shall use the sequence (4.1).
\medskip

At the level of sections, we have the following operators:

  \begin{equation}\label{3}
{\E}^{q,q} (X)_\R   \stackrel{\sigma_{2q}}{\to}  {\E}^{q+1,q+1} (X)_\R \stackrel{\sigma_{2q+1}}{\to} ({\E}^{q+2,q+1} \oplus {\E}^{q+1,q+2}) (X)_\R, 
\end{equation}
  \begin{equation}\label{4} (\E^{p,p-1} \oplus \E^{p-1,p}) (X)_\R \stackrel{\sigma_{2p-1}}{\to} \E^{p,p} (X)_\R \stackrel{\sigma_{2p}}{\to} \E^{p+1,p+1} (X)_\R
\end{equation}
 \begin{equation}\label{5} {\E}^{2p-1} (X)_\R \stackrel{d_{2p-1}}{\to}  {\E}^{2p} (X)_\R  \stackrel{d_{2p}}{\to}  {\E}^{2p+1} (X)_\R 
\end{equation}

and their dual operators, acting on currents:
  \begin{equation}\label{6}
 ({\E}_{q+2,q+1}' \oplus {\E}_{q+1,q+2}') (X)_\R  \stackrel{\sigma_{2q+1}'}{\to}  ({\E}_{q+1,q+1}') (X)_\R \stackrel{\sigma_{2q}'}{\to}  ({\E}_{q,q}') (X)_\R, 
\end{equation}
  \begin{equation}\label{7} (\E_{p+1,p+1}') (X)_\R \stackrel{\sigma_{2p}'}{\to} (\E_{p,p}') (X)_\R \stackrel{\sigma_{2p-1}'}{\to}  (\E_{p,p-1}' \oplus \E_{p-1,p}') (X)_\R
\end{equation}
  \begin{equation}\label{8} ({\E}_{2p+1}') (X)_\R \stackrel{d_{2p}'}{\to} ({\E}_{2p}') (X)_\R \stackrel{d_{2p-1}'}{\to}  ({\E}_{2p-1}') (X)_\R.
\end{equation}
\medskip
 
Using this notation, we can rewrite the statements of Theorem 3.2 in a useful manner: f.i., in statement (1), the condition $\de \Omega =0$ (which implies $d \Omega =0$ since $\Omega$ is real), means $\Omega \in  Ker \sigma_{2q+1}$ (where $q := p-1$) and, on the other hand, $T = \de  \overline S + \db S$ means $T \in Im \sigma_{2q+1}'$.
In (2), it is not hard to check that the condition $\de \Omega = \ddb \alpha$ means that 
$\Omega \in (Ker \sigma_{2q+1} + Im \sigma_{2p-1})$, while the condition on $T$ (closed and the $(p,p)-$component of a boundary) means exactly that 
$T \in (Im \sigma_{2q+1}' \cap Ker \sigma_{2p-1}')$, and so on. 

{\bf Warning!} It is important to use the index $q=p-1$, because in this manner we denote in a unified way the bi-degree of the form and the {\it right} operator in the statement of the theorem. For instance, in (1) $\Omega \in  Ker \sigma_{2q+1}$ means the $\Omega$ is a closed real $(p,p)-$form. This is more clear in (2): $T \in Im \sigma_{2q+1}'$ means $T \in \E_{p,p}' (X)_\R, T = \de  \overline S + \db S$, and $T \in Ker \sigma_{2p-1}'$ means 
$T \in \E_{p,p}' (X)_\R, (\de T, \db T)=0$.

\medskip
Summing up, we get the following version of Theorem 3.2: 
\medskip 

{\bf Theorem (3.2)'.}
Let $M$ be a compact complex manifold of dimension $n \geq 2$; let $1 \leq p \leq n-1$ and denote $q := p-1$ in the subscript of the operators
cited in the sequences (4.5) - (4.10). 

\begin{enumerate}

\item There is a real transverse $(p,p)-$form $\Omega$ on $M$ such that $\Omega \in Ker \sigma_{2q+1} \ \iff$    
there are no non trivial currents $T \in {\E}_{p,p}'(X)_{\R}$, $T \geq 0$, $T \in Im \sigma_{2q+1}'.$ 

\item There is a real transverse $(p,p)-$form $\Omega$ on $M$ such that $\Omega \in (Ker \sigma_{2q+1} + Im \sigma_{2p-1}) \ \iff$    
there are no non trivial currents $T \in {\E}_{p,p}'(X)_{\R}$, $T \geq 0$, $T \in (Ker \sigma_{2p-1}' \cap Im \sigma_{2q+1}').$ 

\item There is a real $2p-$form $\Psi$ with $\Psi^{p,p} := \Omega$ transverse
 on $M$ such that $\Psi \in Ker d_{2p} \ \iff $ 
there are no non trivial currents $T \in {\E}_{p,p}'(X)_{\R}$, $T \geq 0$, $T \in Im d_{2p}'.$ 

\item There is a  real transverse $(p,p)-$form $\Omega$ on $M$ such that $\Omega \in Ker \sigma_{2p} \ \iff $
there are no non trivial currents $T \in {\E}_{p,p}'(X)_{\R}$, $T \geq 0$, $T \in Im \sigma_{2p}'.$ 
\end{enumerate}

\medskip

\begin{proof}
(of Theorem 3.2 or (3.2)').  In all cases, one part of the proof is simple: if there exist both the form $\Omega$ and  the current $T$ as given in the Theorem, we would have by Claim 2.4.1:
$(T, \Omega)> 0$. 
But:

Case (1): 
$(T, \Omega)=0$  because $\Omega \in Ker \sigma_{2q+1}$ and $T \in Im \sigma_{2q+1}' \subseteq (Ker \sigma_{2q+1})^{\perp}$.

Case (2): $(T, \Omega)= ( \de  \overline S + \db S, \Omega) = (\overline S,  \de  \Omega) + (S, \db \Omega) = (\overline S, \ddb \alpha) + (S, - \ddb \overline \alpha) = - (\ddb  \overline S, \alpha) + (\ddb S, \overline \alpha)= 0.$

Case (3): 
$(T, \Omega)=0$  because $(T, \Omega)=(T, \Psi)$ and $\Psi \in Ker d_{2p}$, $T \in Im d_{2p}' \subseteq (Ker d_{2p})^{\perp}$.

Case (4): 
$(T, \Omega)=0$  because $\Omega \in Ker \sigma_{2p}$ and $T \in Im \sigma_{2p}' \subseteq (Ker \sigma_{2p})^{\perp}$.

\bigskip

Let us prove now the converses (the technical details, which we shall prove all together in Proposition 4.6, are collected in the Claims). We refer to sequences (4.5) - (4.10).
\medskip

{\bf Case (1).} Let us denote by $P(M):= SP_p(M)$ the closed convex cone of strongly positive currents of bidimension $(p,p)$ (we choose this notation to emphasize that we could carry on the proof also with the cones $P_p(M)$ or $WP_p(M)$, and the corresponding dual cones of forms). 

Consider on $M$ a hermitian metric $h$ with associated $(1,1)-$form $\gamma$, and let $$\tilde P(M) := \{ T \in P(M) / (T, \gamma^p) = 1 \};$$
 it is a compact convex basis for $P(M)$ (in the sense of Sullivan \cite{Su}, Prop. I.5). 
 
 Our hypothesis can be written as: $Im \sigma_{2q+1}' \cap \tilde P(M) = \emptyset.$
\medskip

{\bf Claim (1).}  $Im \sigma_{2q+1}'$ is a closed linear subspace of ${\E}_{p,p}'(M)_{\R}$.
\medskip

Let us conclude the proof: by the Separation Theorem 4.5, there exists a closed hyperplane in ${\E}_{p,p}'(M)_{\R}$, strictly separating $Im \sigma_{2q+1}'$ and $\tilde P(M)$. Thus we get a $(p,p)-$form $\Omega$ such that 
$(T, \Omega)>0$ for all $T \in \tilde P(M)$ and 
$(T, \Omega)=0$ for all $T \in Im \sigma_{2q+1}'$.

This last condition means precisely that $\Omega \in (Im \sigma_{2q+1}')^{\perp} = Ker \sigma_{2q+1}$.

As for the first one, it assures that $\Omega$ is transverse. In fact, consider $T \in SP_p(M), T \neq 0$; then $(T, \gamma^p) = c > 0$, since $\gamma^p$ is a transverse form, and this implies that $c^{-1} T \in \tilde P(M)$, thus $(\Omega, c^{-1} T) > 0$ and also $(\Omega,  T) > 0$; this is sufficient by the Claim 
2.4.1.
\bigskip

{\bf Case (4)} is very similar, since now $Im \sigma_{2p}' \cap \tilde P(M) = \emptyset.$ To conclude as above, we have only to prove:
\medskip

{\bf Claim (4).}  $Im \sigma_{2p}'$ is a closed linear subspace of ${\E}_{p,p}'(M)_{\R}$.
\bigskip

{\bf Case (2).}  Our hypothesis is $(Im \sigma_{2q+1}' \cap Ker \sigma_{2p-1}') \cap \tilde P(M) = \emptyset.$
\medskip

{\bf Claim (2a).}  $Im \sigma_{2q+1}' \cap Ker \sigma_{2p-1}'$ is a closed linear subspace of ${\E}_{p,p}'(M)_{\R}$.
\medskip

By the Separation Theorem 4.5, there exists a closed hyperplane in ${\E}_{p,p}'(M)_{\R}$, strictly separating $Im \sigma_{2q+1}' \cap Ker \sigma_{2p-1}'$ and $\tilde P(M)$. Thus we get a $(p,p)-$form $\Omega$ such that 
$(T, \Omega)>0$ for all $T \in \tilde P(M)$ (that is, $\Omega$ is transverse) and 
$(T, \Omega)=0$ for all $T \in Im \sigma_{2q+1}' \cap Ker \sigma_{2p-1}'$, that is, 
$\Omega \in (Im \sigma_{2q+1}' \cap Ker \sigma_{2p-1}')^{\perp}$, which is the closure of the linear subspace $(Ker \sigma_{2q+1} + Im \sigma_{2p-1})$, and we conclude by the following Claim:
\medskip

{\bf Claim (2b).}  $Im \sigma_{2p-1}$ and $(Ker \sigma_{2q+1} + Im \sigma_{2p-1})$ are closed linear subspaces.
\medskip

{\bf Case (3).} Let us consider the Frechet space ${\E}_{2p}'(M)_{\R} = ( \oplus_{a+b=2p} \E_{a,b}'(M))_\R $, and denote by $\pi$ the projection on the addendum ${\E}_{p,p}'(M)_{\R}$. Moreover, consider
the closed convex cone $P'(M) = \{ R \in {\E}_{2p}'(M)_{\R} / R = \pi(R) , \pi(R) \in P(M) \}$, and its compact basis $\tilde P'(M) := \{ R \in P'(M) / (R, \gamma^p) = 1 \}$.
Our hypothesis is: $Im d_{2p}' \cap \tilde P'(M) = \emptyset.$
\medskip

{\bf Claim (3).}  $Im d_{2p}'$ is a closed linear subspace of ${\E}_{2p}'(M)_{\R}$.
\medskip

By the Separation Theorem 4.5, we get a real $2p-$form $\Psi = \sum_{a+b=2p} \Psi^{a,b}$, such that $d \Psi = 0$, since $\Psi \in (Im d_{2p}')^{\perp} = Ker d_{2p}$. Moreover, for every $R \in \tilde P'(M), \ (R,\Psi) = (R,  \Psi^{p,p}) > 0$; this assures as in case (1) that 
the 
$(p,p)-$form $\Omega := \Psi^{p,p}$ is transverse.
\medskip

We end the proof by the following remark.

\medskip
{\bf Remark 4.5.1.}
All the previous claims are proved, if we check that the following linear subspaces are closed: $Im \sigma_{2q+1}$, $Im \sigma_{2p-1}$, $Im \sigma_{2p}$, $Im d_{2p}$, 
and moreover that $Im \sigma_{2p-1}$ has finite codimension in $Ker \sigma_{2p}$ (this is done in Proposition 4.6).

 In fact notice that, by the Closed Range Theorem (see f.i. \cite{Sc} III.7.7), we can always switch from the dual operator to the operator itself (as regards the closure of the image). 

Moreover, let us consider $(Ker \sigma_{2q+1} + Im \sigma_{2p-1})$ in Claim (2b): 
both addenda are closed subspaces of $Ker \sigma_{2p}$.
Consider the quotient map $f : Ker \sigma_{2p} \to \frac{Ker \sigma_{2p}}{Ker \sigma_{2q+1}}$: it is a continuous surjective linear map, thus by Lemma 4.3, $f (Im \sigma_{2p-1})$ is closed in $\frac{Ker \sigma_{2p}}{Ker \sigma_{2q+1}}$, and so $$f^{-1} (f (Im \sigma_{2p-1})) = (Ker \sigma_{2q+1} + Im \sigma_{2p-1})$$ is closed in $Ker \sigma_{2p}$, and thus in $\E^{p,p} (M)_\R$.
\end{proof}

\medskip

\begin{prop} The following linear subspaces are closed: $Im \sigma_{2q+1}$, $Im \sigma_{2p-1}$, $Im \sigma_{2p}$, $Im d_{2p}$, 
and moreover $Im \sigma_{2p-1}$ has finite codimension in $Ker \sigma_{2p}$.
\end{prop}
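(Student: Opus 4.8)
The plan is to reduce all five assertions to a single principle, already used tacitly in Remark~4.5.1: by Lemma~4.2, if $\sigma\colon L\to M$ is a continuous linear map of Fr\'echet spaces and $Z\subseteq M$ is a \emph{closed} subspace with $Im\,\sigma\subseteq Z$ and $\dim(Z/Im\,\sigma)<\infty$, then $Im\,\sigma$ is closed in $Z$, hence in $M$; moreover, by the Closed Range Theorem (see \cite{Sc}, III.7.7) closedness of $Im\,\sigma$ is equivalent to closedness of $Im\,\sigma'$, so one may argue on forms. Accordingly, for each of $\sigma_{2q+1}$ (with $q:=p-1$), $\sigma_{2p-1}$, $\sigma_{2p}$ and $d_{2p}$ I would take for $Z$ the kernel of the next operator in the relevant complex --- a closed subspace, since that operator is continuous between Fr\'echet spaces --- and prove that the quotient, which is a cohomology group of a complex of global sections, is finite-dimensional; the assertion that $Im\,\sigma_{2p-1}$ has finite codimension in $Ker\,\sigma_{2p}$ is then just one instance of this.

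For $d_{2p}$ the principle applies at once with $Z=Ker\,(d_{2p+1}\colon\E^{2p+1}(M)_\R\to\E^{2p+2}(M)_\R)$, because $Z/Im\,d_{2p}=H^{2p+1}_{DR}(M,\R)$ is finite-dimensional ($M$ compact).

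For the three $\sigma$'s, recall that $\sigma_{2p-1}$ and $\sigma_{2p}=i\ddb$ occur in the resolution \eqref{2} of the sheaf $\H$ for the parameter $p$, while $\sigma_{2q+1}$, $q=p-1$, occurs --- as the differential $\E^{p,p}_\R\to(\E^{p+1,p}\oplus\E^{p,p+1})_\R$ --- in the corresponding resolution of $\H$ for the parameter $q$ (which is the sequence \eqref{1} when $p=1$). Write $\mathcal S^\bullet$ for the sheaves of such a resolution, put $\mathcal K^{-1}:=\H$ and $\mathcal K^i:=Ker\,\sigma_{i+1}=Im\,\sigma_i$; exactness of the resolution yields short exact sequences of sheaves $0\to\mathcal K^{i-1}\to\mathcal S^i\to\mathcal K^i\to 0$. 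On sections, the cohomology of the complex of global sections at the spot $\mathcal S^{i+1}(M)$ equals $\mathcal K^i(M)/\sigma_i(\mathcal S^i(M))$, and the long exact sequence of $0\to\mathcal K^{i-1}\to\mathcal S^i\to\mathcal K^i\to 0$ shows that this quotient injects into $H^1(M,\mathcal K^{i-1})$. So it suffices to show $H^j(M,\mathcal K^i)$ is finite-dimensional for all $i\geq -1$ and all $j\geq 1$. For $i=-1$ this is $H^j(M,\H)$, finite-dimensional by the cohomology sequence of the short exact sequence of sheaves $0\to\R\to\O\to\H\to 0$ together with finiteness of $H^j(M,\R)$ ($M$ compact) and of $H^j(M,\O)$ ($M$ compact complex; Cartan--Serre). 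For the inductive step, the long exact sequence of $0\to\mathcal K^{i-1}\to\mathcal S^i\to\mathcal K^i\to 0$ squeezes $H^j(M,\mathcal K^i)$ (for $j\geq 1$) between a quotient of $H^j(M,\mathcal S^i)$ and a subspace of $H^{j+1}(M,\mathcal K^{i-1})$; the latter is finite-dimensional by induction, and the former because $\mathcal S^i$ is either fine (hence acyclic in positive degrees) or equals some $\L^a=(\Omega^{a+1}\oplus(\oplus_{k=0}^a\E^{a-k,k})\oplus\overline{\Omega^{a+1}})_\R$, whose positive-degree cohomology is, real-linearly, built from the finite-dimensional $H^j(M,\Omega^{a+1})$ (Cartan--Serre). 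Thus the complexes of global sections have finite-dimensional cohomology at the relevant spots, and the first paragraph gives that $Im\,\sigma_{2q+1}$, $Im\,\sigma_{2p-1}$, $Im\,\sigma_{2p}$ are closed, with $Im\,\sigma_{2p-1}$ of finite codimension in $Ker\,\sigma_{2p}$.

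The point that needs real care --- and the only genuinely new ingredient compared with the classical case $p=1$ of \cite{HL}, where the fine resolution \eqref{1} can be used and the cohomology of the complex of global sections is literally $H^\bullet(M,\H)$ --- is that for $p>0$ the resolution \eqref{2} contains the \emph{non-fine} sheaves $\Omega^{j+1}$ and $\overline{\Omega^{j+1}}$, so $H^\bullet(M,\H)$ cannot be read off directly and finiteness must instead be propagated through the kernel sheaves $\mathcal K^i$ as above. (Equivalently one may run the hypercohomology spectral sequence $E_1^{a,b}=H^b(M,\mathcal S^a)\Rightarrow H^{a+b}(M,\H)$, observe that every $E_1^{a,b}$ is finite-dimensional and the limit is finite-dimensional, and conclude that $E_2^{a,0}$ --- the cohomology of the complex of global sections --- is finite-dimensional.) The rest is routine: keeping straight which operator belongs to which resolution, and checking the (standard) continuity of the $\sigma_i$ between the relevant Fr\'echet spaces.
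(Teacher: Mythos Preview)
Your main argument is correct and follows essentially the same route as the paper: Lemma~4.2 reduces everything to finite-dimensionality of the relevant quotients, which you establish by chasing the short exact sequences coming from the resolution~(4.2) of $\H$, using finiteness of $H^j(M,\H)$ (via $0\to\R\to\O\to\H\to 0$) and of $H^j(M,\L^a)$ (Cartan--Serre for the coherent $\Omega^{a+1}$). The only difference from the paper is one of packaging: where the paper disposes of $Im\,\sigma_{2p}$ and $Im\,\sigma_{2p-1}$ by directly citing \cite{Bi1} for the finite-dimensionality of $H_{\ddb}^{p+1,p+1}(M,\R)$ and $H_{\de+\db}^{p,p}(M,\R)$, you re-derive these in-house via the same sheaf machinery already set up for $\sigma_{2q+1}$ --- a legitimate and arguably more self-contained choice.

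One small slip, though, in the parenthetical spectral-sequence alternative: you claim that ``every $E_1^{a,b}$ is finite-dimensional'', but $E_1^{a,0}=\mathcal S^a(M)$ is infinite-dimensional. The conclusion that $E_2^{a,0}$ is finite-dimensional is nonetheless correct --- for $r\geq 2$ the differential into $E_r^{a,0}$ comes from $E_r^{a-r,r-1}$, which \emph{is} finite-dimensional, and the differential out lands in $E_r^{a+r,1-r}=0$; hence $E_2^{a,0}$ differs from $E_\infty^{a,0}$ by finitely many finite-dimensional quotients, and $E_\infty^{a,0}$ is a subquotient of the finite-dimensional $H^a(M,\H)$. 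So the aside can be repaired, and in any case it does not affect your main (long-exact-sequence) argument.
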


\begin{proof} By Lemma 4.2, it is enough to prove  (for suitable spaces and maps)  that $ \frac{E}{Im f}$ is finite dimensional.
Now recall that, since $M$ is compact: $$dim H_{DR} ^{2p+1}(M, \R) = dim \frac{(Ker d_{2p+1})(M)}{(Im d_{2p})(M)} < \infty$$
$$dim H_{\ddb}^{p+1,p+1}(M, \R) = dim \frac{(Ker \sigma_{2p+1})(M)}{(Im \sigma_{2p})(M)} < \infty$$
$$dim H_{\de + \db}^{p,p}(M, \R) = dim \frac{(Ker \sigma_{2p})(M)}{(Im \sigma_{2p-1})(M)} < \infty.$$
As for the last assertions one may look at \cite{Bi1}, or at some other papers concerning Bott-Chern and Aeppli cohomology.

It remains to check $Im \sigma_{2q+1}$; as said before, we need: 
$dim \frac{(Ker \sigma_{2q+2})(M)}{(Im \sigma_{2q+1})(M)} < \infty.$

When $q > 0$, let us consider the sheaves involved in the sequence (4.2). Since $M$ is compact, it is well known that 
$H^k(M, \B^j) = 0$ for $k>0$, and $dim H^k(M, \L^j) < \infty$, since the sheaves $\Omega^{j}$ are coherent. Thus, for $k > 0$, $dim H^k(M, \H) < \infty$, using the cohomology sequence associated to  
 \begin{equation}\label{3}
 0 \to \R \stackrel{i}{\to}  {\O}   \stackrel{Re}{\to}  \H \to 0  
 \end{equation}
where $i(c) = ic , \  c \in {\R}$ , and  
$\ 2 Re f(z) = f(z) + \overline {f(z)}$.
\medskip

From the following short exact sequences arising from (4.2), 
 \begin{equation}\label{4}
 0  \to  {\H}  \to  {\L}^0 
\to {K}er \enskip \sigma_1  \to  0, \ \ \ \ \ 0  \to  {K}er \enskip \sigma_{1}  \to  {\L}^{1}
  \to  {K}er \enskip \sigma_{2}  \to 0,  
\end{equation}
 $$0  \to  {K}er \enskip \sigma_{2}  \to  {\L}^{2}
  \to  {K}er \enskip \sigma_{3}  \to 0, \ \dots \ 0  \to  {K}er \enskip \sigma_{2q-1}  \to  {\B}^{2q-1}
  \to  {K}er \enskip \sigma_{2q}  \to 0,$$  
$$0 \to {K}er \enskip \sigma_{2q}  \to  {\E}^{q,q}_{\R}
 \to {K}er \enskip \sigma_{2q+1} \to  0, \ \ \ 
 0 \to {K}er \enskip \sigma_{2q+1}  \to  {\E}^{q+1,q+1}_{\R}
 \to {K}er \enskip \sigma_{2q+2} \to  0$$

\medskip
 we get: by the first one, $dim H^k(M, Ker \sigma_{1}) < \infty$ for $k>0$, which implies,
by the second one, $dim H^k(M, Ker \sigma_{2}) < \infty$ for  $k>0$, and so on. 
And finally 
$dim \frac{(Ker \sigma_{2q+2})(M)}{(Im \sigma_{2q+1})(M)} < \infty$
since $ dim H^1(M, Ker \sigma_{2q+1})< \infty.$
\medskip

When $q=0$, use the sequence (4.1) to get the same result.
\end{proof}
\medskip

Thus we ended the proof of Theorem 3.2.

\bigskip

\section{Exact generalized \pkk forms}

Let us begin with an example. It is well known that a 1K form on a compact K\"ahler manifold cannot be exact, because on the contrary we would have:
$$0 < vol(M) = \int_M \omega^n = \int_M d\alpha \wedge \omega^{n-1} = \int_M d(\alpha \wedge \omega^{n-1}) = 0.$$
But this is no more true when $p >1$. We recall here an example proposed by Yachou \cite{Y}, which illustrates the following result (see \cite{GR}, pp. 506-507): If $G$ is a complex connected semisimple Lie group, it has a discrete subgroup $\Gamma$ such that the homogeneous manifold $G/\Gamma$ is compact, holomorphically parallelizable and does not have hypersurfaces (since $a(M) = 0$).

{\bf Example 5.1} Take $G=SL(2,\C)$, $\Gamma = SL(2,\Z)$, and  consider the holomorphic $1-$forms $\eta, \alpha, \beta$ on $M := G/ \Gamma$ induced by the standard basis for $\g^*$: it holds
$$ d \alpha = -2 \eta \wedge \alpha, \ \ d \beta = 2 \eta \wedge \beta, \ \ d \eta = \alpha \wedge \beta.$$
The standard fundamental  form, given by $\omega = \frac{i}{2} (\alpha \wedge \overline \alpha + \beta \wedge \overline \beta +\eta \wedge \overline \eta )$, satisfies $d \omega^2=0$, so that $\omega^2$ is a balanced form: but it is exact, since
$$\omega^2 = d(\frac{1}{16} \alpha \wedge d \overline  \alpha + \frac{1}{16} \beta \wedge d \overline  \beta +\frac{1}{4} \eta \wedge d \overline \eta ).$$
Hence this manifold does not support not only hypersurfaces, but also closed positive $(1,1)-$currents.

\bigskip
As a matter of fact, Sullivan considered also exact forms in \cite{Su}, Theorem I.7 (see in the Introduction the second part of the cited result of Sullivan), but this argument was not developed further  by Harvey and Lawson, since on compact manifolds no K\"ahler form can be exact.

We just showed that when $p > 1$ the situation is very different, hence we shall study the general case in the following Theorem, concerning 
 \lq\lq exact\rq\rq generalized $p-$K\"ahler  forms. Some remarks on this theorem are collected after the proof.
\medskip

\begin{thm}
Let $M$ be a {\rm compact} complex manifold of dimension $n \geq 2$, and let $p$ be an integer, $1 \leq p \leq n-1$; denote $q := p-1$ in the subscript of the operators in (4.5) - (4.10). Then:

\begin{enumerate}

\item There is a  transverse $(p,p)-$form $\Omega$ on $M$ such that $\Omega \in Im \sigma_{2q} \ \iff $
there are no non trivial currents $T \in {\E}_{p,p}'(M)_{\R}$, $T \geq 0$, $T \in Ker \sigma_{2q}'.$ 

\item There is a transverse $(p,p)-$form $\Omega$ on $M$ such that $\Omega \in Im d_{2p-1} \ \iff $
there are no real currents $R \in {\E}_{2p}'(M)_{\R}$, $R_{p,p} := T \geq 0$, $T \neq 0$, $R \in Ker d_{2p-1}'.$  

\item There is a transverse $(p,p)-$form $\Omega$ on $M$ such that $\Omega \in Im \sigma_{2p-1} \cap Ker \sigma_{2q+1}$  $\iff $
there are no non trivial currents $T \in {\E}_{p,p}'(M)_{\R}$, $T \geq 0$, $T \in (Im \sigma_{2q+1}' + Ker \sigma_{2p-1}').$ 

\item there is a transverse $(p,p)-$form $\Omega$ on $M$ such that $\Omega \in Im \sigma_{2p-1} \ \iff $
there are no non trivial currents $T \in {\E}_{p,p}'(M)_{\R}$, $T \geq 0$, $T \in Ker \sigma_{2p-1}'.$ 

\end{enumerate}
\end{thm}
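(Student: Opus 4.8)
The plan is to run the Hahn--Banach/Separation machine exactly as in the proof of Theorem (3.2)$'$, observing that statement (4) is the ``reversed'' duality, in which the form is (Aeppli-)exact and the current is closed, and that on the current side this case is softer than those of Theorem 3.2: the relevant space $Ker\,\sigma_{2p-1}'$ is the kernel of a continuous operator, hence automatically closed, so no Closed Range Theorem is needed there. I would first dispose of the easy implication. If a transverse $\Omega = \sigma_{2p-1}(\beta,\overline\beta) = \db\beta + \de\overline\beta$ (with $\beta \in \E^{p,p-1}(M)$, so that $\Omega$ is real) and a nonzero $T \in \E_{p,p}'(M)_\R$ with $T \geq 0$ and $\sigma_{2p-1}'T = 0$, i.e. $\de T = \db T = 0$, both existed, then Claim 2.4.1 gives $(T,\Omega) > 0$, while transferring $\db$ and $\de$ onto $T$ gives $(T,\Omega) = 0$ because $T$ is closed; contradiction. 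Hence at most one of the two objects exists.

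For the converse, I would assume $M$ carries no nonzero closed strongly positive $(p,p)$-current, fix a hermitian metric with fundamental form $\gamma$, and form the compact convex basis $\tilde P(M) := \{T \in SP_p(M) : (T,\gamma^p) = 1\}$ of $SP_p(M)$, exactly as in Case (1). The hypothesis reads $Ker\,\sigma_{2p-1}' \cap \tilde P(M) = \emptyset$, and $Ker\,\sigma_{2p-1}'$ is a closed linear subspace of $\E_{p,p}'(M)_\R$ simply by continuity of $\sigma_{2p-1}'$. So the Separation Theorem 4.5 applies directly and produces a closed hyperplane strictly separating the two sets; since the continuous functionals on $\E_{p,p}'(M)_\R$ are evaluation against elements of $\E^{p,p}(M)_\R$, this gives (after replacing $\Omega$ by $-\Omega$ if needed) a real $(p,p)$-form $\Omega$ with $(T,\Omega) > 0$ for all $T \in \tilde P(M)$ and $(T,\Omega) = 0$ for all $T \in Ker\,\sigma_{2p-1}'$ (a linear functional bounded above on a subspace vanishes on it). The first property forces $\Omega$ transverse by the normalization argument of Case (1): a nonzero $S \in SP_p(M)$ has $(S,\gamma^p) = c > 0$ since $\gamma^p$ is transverse, so $c^{-1}S \in \tilde P(M)$, $(S,\Omega) > 0$, whence $\Omega > 0$ by Claim 2.4.1. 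The second property says $\Omega \in (Ker\,\sigma_{2p-1}')^\perp$.

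The final step is to upgrade $\Omega \in (Ker\,\sigma_{2p-1}')^\perp$ to $\Omega \in Im\,\sigma_{2p-1}$. Inside the Fr\'echet space $\E^{p,p}(M)_\R$ one has $(Ker\,\sigma_{2p-1}')^\perp = \overline{Im\,\sigma_{2p-1}}$: if $\Omega$ did not lie in this closed subspace, Hahn--Banach (Theorem 4.4) would produce a current $S \in \E_{p,p}'(M)_\R$ annihilating $Im\,\sigma_{2p-1}$, equivalently $\sigma_{2p-1}'S = 0$, i.e. $S \in Ker\,\sigma_{2p-1}'$, with $(S,\Omega) \neq 0$, contradicting $\Omega \in (Ker\,\sigma_{2p-1}')^\perp$. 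Then Proposition 4.6 tells us $Im\,\sigma_{2p-1}$ is already closed in $\E^{p,p}(M)_\R$, so $\overline{Im\,\sigma_{2p-1}} = Im\,\sigma_{2p-1}$ and $\Omega \in Im\,\sigma_{2p-1}$, which finishes the proof. The only genuine input beyond soft functional analysis is thus the closedness of $Im\,\sigma_{2p-1}$ from Proposition 4.6, which rests on the finite-dimensionality of the Aeppli cohomology $H_{\de+\db}^{p,p}(M,\R)$ of the compact manifold $M$; I do not anticipate any obstacle specific to statement (4)--- compared with Theorem 3.2 this case is in fact the most straightforward, the current-side closedness being free.
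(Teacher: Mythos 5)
Your argument for statement (4) is correct, and it is essentially the paper's own proof of that case: separate the compact basis $\tilde P(M)$ of $SP_p(M)$ from the weakly closed subspace $Ker\,\sigma_{2p-1}'$ by Theorem 4.5, identify $(Ker\,\sigma_{2p-1}')^{\perp}$ with the closure of $Im\,\sigma_{2p-1}$, and invoke the closedness of $Im\,\sigma_{2p-1}$ coming from $\dim H_{\de+\db}^{p,p}(M,\R)<\infty$ (Propositions 4.6 and 5.2). The gap is that the theorem has four statements and you have proved only one of them. Statement (1) does go through by the identical template (with $Im\,\sigma_{2q}$ in place of $Im\,\sigma_{2p-1}$, its closedness resting on the finite dimensionality of $\frac{Ker\,\sigma_{2q+1}}{Im\,\sigma_{2q}}$), but your closing remark that ``the only genuine input beyond soft functional analysis is the closedness of $Im\,\sigma_{2p-1}$'' is not true for (2) and (3), and neither of those cases is obtainable by transplanting your argument verbatim.

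Concretely: in (3) the set of currents to be avoided is $Im\,\sigma_{2q+1}' + Ker\,\sigma_{2p-1}'$, and the sum of a closed subspace with the (closed) image of a dual operator is not closed ``for free''; the paper's Claim (3a) needs the current-side computation of Aeppli cohomology, i.e.\ that $Im\,\sigma_{2q+1}'$ has finite codimension in $Ker\,\sigma_{2q}'$ (Proposition 5.2), before the separation can be run and before $(Im\,\sigma_{2q+1}' + Ker\,\sigma_{2p-1}')^{\perp} = Ker\,\sigma_{2q+1} \cap \overline{Im\,\sigma_{2p-1}}$ can be exploited. In (2) your compact-basis separation breaks down at the start: the relevant currents live in ${\E}_{2p}'(M)_{\R}$, with positivity imposed only on the $(p,p)$-component, so the normalized set $\{R \in Ker\,d_{2p-1}' : R_{p,p}\geq 0,\ (R_{p,p},\gamma^p)=1\}$ has all its other bidegree components unconstrained and is not compact; Theorem 4.5 cannot be applied to it. The paper handles this case from the form side instead: it applies the geometric Hahn--Banach Theorem 4.4 to an open convex set of forms whose $(p,p)$-part dominates some $c\gamma^p$, disjoint from $Im\,d_{2p-1}$, obtains a $d$-closed current $R$, and then proves $T:=R_{p,p}\geq 0$, $T\neq 0$ by the limiting argument $(T,\Omega)=\lim_{\varepsilon\to 0}(T,\Omega+\varepsilon\gamma^p)\geq 0$. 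You would need to supply these two additional arguments (or some substitute) to have a proof of the full theorem.
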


\begin{proof} As in Theorem 3.2, one side is straightforward. Also the other side of 
the proof is similar to that of Theorem (3.2)': in the present case, we shall separate positive currents from \lq\lq closed\rq\rq currents, so that the separating hyperplane turns out to be a transverse \lq\lq exact\rq\rq form. What we need is a result similar to Proposition 4.6. 
\medskip

Let us give the details. As for the case (4), first of all notice that we require a transverse $(p,p)-$form $\Omega$ on $M$ such that $\Omega = \db \beta + \de \overline \beta$, or, which is the same, a $2p-$form $\Psi = d \Gamma$ such that $\Psi^{p,p} := \Omega > 0$. Thus $\Psi$, but not $\Omega$, is exact in the classical sense.

Let us denote by $P(M):= SP_p(M)$ the closed convex cone of strongly positive currents of bidimension $(p,p)$.
Consider on $M$ a hermitian metric $h$ with associated $(1,1)-$form $\gamma$, and let $\tilde P(M) := \{ T \in P(M) / (T, \gamma^p) = 1 \}$:
 it is a compact convex basis for $P(M)$. 
 
 Our hypothesis can be written as: $Ker \sigma_{2p-1}' \cap \tilde P(M) = \emptyset.$
By the Separation Theorem 4.5, there exists a closed hyperplane in ${\E}_{p,p}'(M)_{\R}$, strictly separating $Ker \sigma_{2p-1}' $ and $\tilde P(M)$. Thus we get a $(p,p)-$form $\Omega$ such that 
$(T, \Omega)>0$ for all $T \in \tilde P(M)$ and 
$(T, \Omega)=0$ for all $T \in Ker \sigma_{2p-1}' $.
This last condition means precisely that $\Omega \in (Ker \sigma_{2p-1}' )^{\perp}$ which is the closure of  $Im \sigma_{2p-1}$.

As for the first condition, it assures that $\Omega$ is transverse. In fact, consider $T \in SP_p(M), T \neq 0$; then $(T, \gamma^p) = c > 0$, since $\gamma^p$ is a transverse form, and this implies that $c^{-1} T \in \tilde P(M)$, thus $(\Omega, c^{-1} T) > 0$ and also $(\Omega,  T) > 0$; this is sufficient by the Claim 
2.4.1.
Thus it remains to prove:

{\bf Claim (4).}  $Im \sigma_{2p-1}$ is a closed linear subspace of ${\E}^{p,p}(M)_{\R}$.
\medskip

\medskip

Case (1) is very similar, since now $Ker \sigma_{2q}' \cap \tilde P(M) = \emptyset.$ To conclude as above, we have only to prove:

{\bf Claim (1).}  $Im \sigma_{2q}$ is a closed linear subspace of ${\E}^{p,p}(M)_{\R}$.
\bigskip

Case (3). Notice that the hypothesis on $\Omega$ is equivalent to $\Omega = \db \beta + \de \overline \beta$, with $\ddb \beta =0$, while the condition on $T$ assures that $\de T = \ddb S$.

Therefore  we start from $(Im \sigma_{2q+1}' + Ker \sigma_{2p-1}') \cap \tilde P(M) = \emptyset.$
\medskip

{\bf Claim (3a).}  $Im \sigma_{2q+1}' + Ker \sigma_{2p-1}'$ is a closed linear subspace of ${\E}_{p,p}'(M)_{\R}$.
\medskip

By the Separation Theorem 4.5, there exists a closed hyperplane in ${\E}_{p,p}'(M)_{\R}$, strictly separating $Im \sigma_{2q+1}' + Ker \sigma_{2p-1}'$ and $\tilde P(M)$. Thus we get a $(p,p)-$form $\Omega$ such that 
$(T, \Omega)>0$ for all $T \in \tilde P(M)$ (that is, $\Omega$ is transverse) and 
$(T, \Omega)=0$ for all $T \in Im \sigma_{2q+1}' + Ker \sigma_{2p-1}'$, that is, 
$\Omega \in (Im \sigma_{2q+1}' + Ker \sigma_{2p-1}')^{\perp}$, which is given by $Ker \sigma_{2q+1}$ intersected the closure of the linear subspace  $Im \sigma_{2p-1}$, and we conclude by the following Claim:
\medskip

{\bf Claim (3b).}  $Im \sigma_{2p-1}, Im \sigma_{2q+1}'$  are closed linear subspaces.
\bigskip

Case (2). Let us consider the Frechet space ${\E}^{2p}(M)_{\R} = ( \oplus_{a+b=2p} \E^{a,b}(M))_\R $, and denote by $\pi$ the projection on the addendum ${\E}^{p,p}(M)_{\R}$. Moreover, consider
the set $$A := \{\Psi \in {\E}^{2p}(M)_{\R} / 
 \exists c > 0 \ {\rm such \  that} \ \pi(\Psi) > c \gamma^p \}$$
(the condition obviously means that $\pi(\Psi) - c \gamma^p$ is strictly weakly positive).

It is easy to control that $A$ is a non-empty convex open subset in the topological vector space ${\E}^{2p}(X)_{\R}$. If there is no form $\Omega$ as stated in the Theorem, then we get also $$(A \cap {\E}^{p,p}(X)_{\R} ) \cap Im d_{2p-1} = \emptyset.$$

{\bf Claim (2).}  $Im d_{2p-1}$ is a closed linear subspace of ${\E}^{2p}(M)_{\R}$.
\medskip

By the Hahn-Banach Theorem 4.4, we get a separating closed hyperplane, which is nothing but a current $R \in {\E}_{2p}'(X)_{\R}$, for which we can suppose:

$R \in (Im d_{2p-1})^{\perp} = Ker d_{2p-1}'$,

$(R, \Omega) = (R_{p,p}, \Omega)>0$ for every $\Omega \in (A \cap {\E}^{p,p}(X)_{\R})$ (thus $T:= R_{p,p} \neq 0$).

Let us check that $T \geq 0$, i.e. $(T, \Omega) \geq 0, \ \forall \Omega \in WP^{p,p}(X)$. For every $\varepsilon >0$, $\Omega + \varepsilon \gamma^p \in A$, thus 
$$(T, \Omega) = (T, \lim_{\varepsilon \to 0} \Omega + \varepsilon \gamma^p) = \lim_{\varepsilon \to 0} (T, \Omega + \varepsilon \gamma^p) \geq 0.$$

We end the proof by using the  forthcoming Proposition 5.2.
\end{proof}
\medskip

\begin{prop} The following linear subspaces are closed:  $Im \sigma_{2q+1}$, $Im \sigma_{2p-1}$, $Im \sigma_{2q}$, $Im d_{2p-1}$, 
and moreover  $Im \sigma_{2q+1}'$ has finite codimension in $Ker \sigma_{2q}'$.
\end{prop}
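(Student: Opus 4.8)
The plan is to follow, point for point, the proof of Proposition 4.6. The three ingredients are the same: Lemma 4.2, which makes a continuous linear map between Fr\'echet spaces a topological homomorphism (hence of closed range) as soon as its image has finite codimension in a closed subspace containing it; the fact that on the compact manifold $M$ the de Rham, Bott--Chern and Aeppli cohomology spaces $H_{DR}^{j}(M,\R)$, $H_{BC}^{j,j}(M,\R)$ and $H_{A}^{j,j}(M,\R)$ are finite-dimensional; and, when this is not enough, the resolution (4.2) of $\H$, in which the sheaves $\B^{j}$ are fine (so $H^{k}(M,\B^{j})=0$ for $k>0$) while the $\L^{j}$ are assembled from the coherent sheaves $\Omega^{j}$ (so $\dim H^{k}(M,\L^{j})<\infty$), together with the short exact sequence $0\to\R\to\O\to\H\to0$.

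First I would handle the two items that already occur in Proposition 4.6. For $Im\sigma_{2p-1}$, the image of $(\beta,\overline\beta)\mapsto\db\beta+\de\overline\beta$ inside $\E^{p,p}(M)_{\R}$, one has $Im\sigma_{2p-1}\subseteq Ker\sigma_{2p}$ and the quotient $Ker\sigma_{2p}/Im\sigma_{2p-1}$ is $H_{A}^{p,p}(M,\R)$, which is finite-dimensional; Lemma 4.2 then gives closedness. For $Im\sigma_{2q+1}$, the image of $d$ leaving $\E^{p,p}(M)_{\R}$, one repeats the sheaf chase of Proposition 4.6 verbatim: threading the short exact sequences drawn from (4.2), from $0\to\H\to\L^{0}\to Ker\sigma_{1}\to0$ down to $0\to Ker\sigma_{2q}\to\E^{q,q}_{\R}\to Ker\sigma_{2q+1}\to 0$, propagates finite-dimensionality of the first cohomology to the sheaf $Ker\sigma_{2q+1}$; since $(Ker\sigma_{2q+2})(M)/(Im\sigma_{2q+1})(M)\cong H^{1}(M,Ker\sigma_{2q+1})$, this quotient is finite-dimensional and Lemma 4.2 applies. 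When $q=0$ one uses (4.1) in place of (4.2).

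The two genuinely new items are lighter. Since $d\circ i\ddb=0$, the operator $\sigma_{2q}=i\ddb$ carries $\E^{q,q}(M)_{\R}$ into the closed subspace $Ker\sigma_{2q+1}$, with quotient equal to $H_{BC}^{p,p}(M,\R)$; finite-dimensionality of this quotient makes $Im\sigma_{2q}$ closed. Likewise $d_{2p-1}$ carries $\E^{2p-1}(M)_{\R}$ into $Ker d_{2p}$ with quotient $H_{DR}^{2p}(M,\R)$, again finite-dimensional, so $Im d_{2p-1}$ is closed. Finally the statement about $Im\sigma_{2q+1}'$ comes by dualizing: once $Im\sigma_{2q+1}$ is closed, the Closed Range Theorem (as invoked in Remark 4.5.1) gives that $Im\sigma_{2q+1}'$ is closed in $\E_{p,p}'(M)_{\R}$, and the middle cohomology $Ker\sigma_{2q}'/Im\sigma_{2q+1}'$ of the dualized three-term sequence is the dual of $Ker\sigma_{2q+1}/Im\sigma_{2q}\cong H_{BC}^{p,p}(M,\R)$, hence finite-dimensional; that is, $Im\sigma_{2q+1}'$ has finite codimension in $Ker\sigma_{2q}'$.

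As in Proposition 4.6, the one step I expect to require genuine care is the closedness of $Im\sigma_{2q+1}$: it is the only claim that really uses the resolution (4.2) with its sheaves of non-vanishing cohomology, and it has to be obtained through the step-by-step cohomology chase rather than quoted from a classical finiteness theorem. Everything else --- the Aeppli, Bott--Chern and de Rham quotients, and the passage to the dual operator --- reduces at once to the standard finiteness results on a compact complex manifold and to the Closed Range Theorem.
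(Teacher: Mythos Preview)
Your proof is correct and follows essentially the same plan as the paper's own proof, which simply refers the reader to Proposition~4.6 for the overlapping items and adds one remark for the final claim. The only minor difference is in that last step: the paper identifies $Ker\,\sigma_{2q}'/Im\,\sigma_{2q+1}'$ directly with the Aeppli group $H_{\de+\db}^{k,k}(M,\R)$ (where $k=n-p$), invoking the fact---stated at the end of Section~2---that the embedding of forms into currents induces isomorphisms on Bott--Chern and Aeppli cohomology; your route, computing the quotient as the topological dual of $H_{BC}^{p,p}(M,\R)$ via the Closed Range Theorem and the annihilator identity $(G/F)'\cong F^{\perp}/G^{\perp}$, is equally valid but a shade less direct.
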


\begin{proof}  See the proof of Proposition 4.6. Notice moreover that, for $p+k=n$, 
$$H_{\de + \db}^{k,k}(M, \R) \simeq \frac{\{T
\in {\E '}_{p,p}(X)_{\R}; i\ddb T =0\}}{\{ \de \overline S + \db S ; S \in  {\E '}_{p,p+1}(X)
\}} =  \frac{(Ker \sigma_{2q}')(M)}{(Im \sigma_{2q+1}')(M)}$$
since the cohomology groups can be described using either forms or currents of the same bidegree. 
\end{proof}

\bigskip

{\bf 5.2.1 Remark.} Notice that, as before,
$5.1(1)  \Longrightarrow  5.1(2)  \Longrightarrow  5.1(3)  \Longrightarrow  5.1(4)$, and moreover, for every $j$, $5.1(j)  \Longrightarrow  3.2(j)$.
The stronger condition, 5.1(1), is in fact a $p-$K\"ahler condition with exact (that means $\ddb-$exact) form.
\medskip

{\bf 5.2.2 Remark.} The statement of Theorem I.7  in \cite{Su}  is the following:
\lq\lq  If no non-trivial structure cycle exists, some transversal closed form is cohomologous to zero\rq\rq. Of course, also the converse holds.

It can be translated in our situation (where $M$ is a compact complex manifold) as follows: 
\lq\lq $M$ has a real $2p-$form $\Psi = \sum_{a+b=2p} \Psi^{a,b}$, such that $ \Psi = d \Gamma$ and the 
$(p,p)-$form $\Omega := \Psi^{p,p}$ is 
transversal,   if and only if $M$ has no strongly  positive currents $T \neq 0$, of bidimension $(p,p)$, such that $dT = 0$\rq\rq.

The condition on the form $\Omega$ is equivalent to say that there is a real $(p,p)-$form $\Omega >0$ with $\Omega= \de  \overline \beta + \db \beta$
for some $(p,p-1)-$ form $\beta$: this means $\Omega \in Im \sigma_{2p-1}$, while $d T = 0$ is equivalent to the condition 
 $T \in Ker \sigma_{2p-1}'.$ Thus the statement of Sullivan is  5.1(4).
\medskip

{\bf 5.2.3 Remark.} 
Sullivan noticed also in III.10 that for $p=1$, there are always non trivial structure cycles, so that on a compact manifold there is no hermitian metric $h$ whose K\"ahler form is the $(1,1)-$component of a boundary in the sense of 5.1(4).

If we look at the whole Theorem 5.1, we can prove in fact that when $p=1$, it reduces to the existence of \lq\lq closed\rq\rq positive currents, since \lq\lq exact\rq\rq transverse forms never exist, due to the compactness of $M$. This is obvious for the statement 5.1(1), since we would have $\omega = i \ddb f >0$, a non-constant plurisubharmonic function on a compact manifold.

But in general, if $\omega \in Im \sigma_{2p-1}$, then $\omega := \psi^{1,1}$, the $(1,1)-$component of an exact form $\psi = d \gamma$. Thus it would give: 
$0 = \int_M (d \gamma)^n = \int_M \psi^n = \int_M \omega^n >0.$

When $p > 1$, the situation changes, as seen in Example 5.1; there, it is easy to check that $\omega^2 \in Im \sigma_{2q}$, i.e. $\omega^2 = i \ddb \gamma$, so that all conditions in Theorem 5.1 make sense, for $p=2$.
\medskip

{\bf 5.2.4 Remark.} It is also interesting to notice that the above conditions on forms can be described as: \lq\lq The null class in cohomology contains a transverse form\rq\rq, where the cohomology groups are: $H_{\ddb}^{p,p}(M, \R)$ for 5.1(1),
$H_{d}^{p,p}(M, \R)$ for 5.1(2),
$H_{\de + \db}^{p,p}(M, \R)$ for 5.1(4). For 5.1(3) the class is that of $g(\Omega)$, where $g$ is the map induced by the identity:
$g: H_{d}^{p,p}(M, \R) \to H_{\de + \db}^{p,p}(M, \R)$.

\bigskip

\section{The non-compact case}

While a 2K form can be exact on a compact manifold (as we have seen in the previous section), the natural environment of \lq\lq exact\rq\rq generalized  \pkk forms is that of non-compact manifolds; indeed, $\C^n$ and Stein manifolds are K\"ahler with a form $\omega = i \ddb u$ ($u$ is a smooth strictly plurisubharmonic function).

Other classes of non-compact manifolds where one could look for generalized \pkk structures are $q-$complete and $q-$convex manifolds. Let us recall here the definitions, which are not uniform in the literature (see also \cite{De}, IX.(2.7) for analytic schemes).

\begin{defn} A manifold $X$ of complex dimension $n$ is said to be {\it strongly $q-$convex} (for brevity, {\it $q-$convex}) if it has a smooth exhaustion function $\psi
: X \to \R$ which is strongly $q-$convex outside an exceptional compact set $K \subset X$ (this means that $X - K$ has an atlas such that, in local coordinates, $(i \ddb \psi)(x)$ has at least $(n-q+1)$ positive eigenvalues, for all $x \in X$).

We say that $X$ is {\it $q-$complete} if $\psi$ can be chosen so that $K = \emptyset$.
\end{defn}

Thus $q=1$ is the strongest property, and  $1-$complete manifolds corresponds to Stein manifolds, since the strongly $1-$convex functions are just the strictly plurisubharmonic functions.

By convention, a {\it compact} manifold $M$ is said to be (strongly) $0-$convex (with $K=M$).
\medskip

{\bf 6.1.1 Remark.} Let $\F \in Coh(X)$, i.e. let $\F$ be a coherent sheaf on $X$. Then (see f.i. \cite{De}, IX.4 and \cite{AG} n. 20):
\begin{enumerate}
\item If $X$ is compact, then $dim H^j(X,\F) < \infty\ \forall j$; 
\item If $X$ is $q-$convex, then $dim H^j(X,\F) < \infty$ when $j \geq q$; 
\item If $X$ is $q-$complete, then $H^j(X,\F) =0$ when $j \geq q$. 
\end{enumerate}

In fact, the $1-$convex spaces can be characterized by that property, and also by the existence of the Remmert reduction, as the following theorem shows
(\cite{CM}):

{\it Theorem.}  
The following statements are equivalent, for a complex analytic space $X$:
\begin{enumerate}
\item $X$ is $1-$convex;
\item For every $\F \in Coh(X)$, $dim H^j(X,\F) < \infty$ when $j \geq 1$;

\item $X$ is obtained from a Stein space by blowing up finitely many points (this is called the Remmert reduction).
\end{enumerate}

\bigskip

As for $q-$complete manifolds, Barlet proved in \cite{Bar} (Proposition 3) the following fact:

\begin{prop} Let $X$ be a complex manifold, and let $\psi:X \to \R^+$ be a smooth proper function which is strongly $q-$convex at each point of $X$. Then there is a transverse $(q,q)-$form $\Omega$ such that $\Omega = i \ddb \theta$ for some real $(q-1,q-1)-$form $\theta$.
\end{prop}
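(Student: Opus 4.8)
The plan is to construct the form $\Omega$ explicitly from the exhaustion function $\psi$ by taking a suitable convex, rapidly increasing reparametrization $\chi\circ\psi$ and then setting $\Omega = (i\ddb(\chi\circ\psi))^{q}$ (or, more precisely, picking off the right $(q,q)$-object built from $i\ddb(\chi\circ\psi)$). The point of the convexity trick is standard: if $\chi:\R\to\R$ is smooth, increasing and convex, then
$$ i\ddb(\chi\circ\psi) = (\chi'\circ\psi)\, i\ddb\psi + (\chi''\circ\psi)\, i\de\psi\wedge\db\psi, $$
and the second term is always weakly positive (it is $i\de\psi\wedge\db\psi$ times a nonnegative scalar), so it only helps. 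The hypothesis that $\psi$ is strongly $q$-convex at every point of $X$ means $i\ddb\psi$ has at least $n-q+1$ positive eigenvalues everywhere; by compactness-on-compacts plus a diagonal/exhaustion argument one chooses $\chi$ growing fast enough that $i\ddb(\chi\circ\psi)$ is, at every point, a Hermitian form whose negative part is controlled and which has $\ge n-q+1$ large positive eigenvalues. Such a real $(1,1)$-form $\beta := i\ddb(\chi\circ\psi)$ has the algebraic property that its $q$-th power $\beta^{q}$ is a transverse (strictly weakly positive) $(q,q)$-form: this is the linear-algebra fact that if a Hermitian $(1,1)$-form has at least $n-q+1$ positive eigenvalues then its $q$-fold wedge power is strictly weakly positive, which can be checked by diagonalizing and using the explicit description of $WP^q$ from Section 2.

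The key steps, in order, are: \emph{(i)} record the pointwise linear-algebra lemma: a Hermitian $(1,1)$-form with $\ge n-q+1$ positive eigenvalues has strictly weakly positive $q$-th exterior power; \emph{(ii)} since $\psi$ is proper, reduce the choice of $\chi$ to a local-to-global estimate on the compact sublevel sets $\{\psi\le c\}$, choosing $\chi$ convex increasing so that on each shell $\{c\le\psi\le c+1\}$ the form $i\ddb(\chi\circ\psi)$ has $n-q+1$ eigenvalues bounded below away from $0$ (here one uses that the positive eigenvalue count is an open condition and that $i\de\psi\wedge\db\psi\ge0$ soaks up the sign-indefinite directions coming from $\chi'\cdot i\ddb\psi$ where $i\ddb\psi$ has negative eigenvalues); \emph{(iii)} set $\Omega := \beta^{q} = (i\ddb(\chi\circ\psi))^{q}$, which by \emph{(i)} and \emph{(ii)} is transverse on all of $X$; \emph{(iv)} exhibit the primitive: since $\beta = i\ddb u$ with $u := \chi\circ\psi$ a real function, one has $\beta^{q} = i\ddb\theta$ with, e.g., $\theta := u\,\beta^{\,q-1} = u\,(i\ddb u)^{q-1}$, using $d\beta = 0$ so that $i\ddb(u\,\beta^{q-1}) = i\ddb u \wedge \beta^{q-1} = \beta^{q}$ (the cross terms vanish because $\beta^{q-1}$ is $d$-closed); note $\theta$ is a real $(q-1,q-1)$-form, as required.

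The main obstacle is step \emph{(ii)}: making the choice of $\chi$ actually work \emph{globally} and \emph{uniformly} on the noncompact manifold. On each compact shell the existence of a large enough slope $\chi'$ and curvature $\chi''$ is routine, but one must patch these shell-by-shell requirements into a single smooth convex increasing $\chi$; this is the standard $q$-convexity bookkeeping (as in Andreotti–Grauert / Demailly \cite{De}, IX), and one has to be a little careful that "at least $n-q+1$ positive eigenvalues, uniformly on the shell" survives the addition of the indefinite term $(\chi'\circ\psi)\,i\ddb\psi$, which is handled by first fixing the ratio $\chi''/\chi'$ large on the shell and only afterwards fixing the overall growth. Everything else — the linear algebra of $WP^q$, the computation of the primitive $\theta$ — is a short direct verification.
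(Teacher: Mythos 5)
The decisive step is your linear-algebra lemma (i), and it is false for $q\ge 2$ -- which is exactly where the difficulty of this proposition lies. Diagonalize a real $(1,1)$-form as $\beta=\sum_j\lambda_j\, i\varphi_j\wedge\overline{\varphi_j}$; then $\beta^q=q!\sum_{|J|=q}\bigl(\prod_{j\in J}\lambda_j\bigr)\bigwedge_{j\in J}i\varphi_j\wedge\overline{\varphi_j}$, so pairing $\beta^q$ with the simple $(q,q)$-vector of the coordinate $q$-plane indexed by $J$ gives a positive multiple of $\prod_{j\in J}\lambda_j$. Transversality of $\beta^q$ therefore forces \emph{every} such product (indeed $\det(\beta|_W)$ for every $q$-plane $W$) to be positive. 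Now take $n=3$, $q=2$ and eigenvalues $(1,1,-\varepsilon)$: this form has $n-q+1=2$ positive eigenvalues, as large as you like, with the negative part as small as you like, yet $\beta^2$ carries the coefficient $-\varepsilon<0$ on the plane spanned by the last two directions, so it is not even weakly positive. Hence "at least $n-q+1$ positive eigenvalues" does not give $\beta^q\in(WP^q)^{int}$, and no quantitative control of the negative eigenvalues can help: a single negative eigenvalue, however small, destroys weak positivity of the $q$-th power as soon as there is one positive eigenvalue to pair it with.

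Step (ii) cannot repair this, because $i\ddb(\chi\circ\psi)=(\chi'\circ\psi)\,i\ddb\psi+(\chi''\circ\psi)\,i\de\psi\wedge\db\psi$ and the second term is a rank-one form vanishing on the complex hyperplane $\ker\de\psi$: on directions tangent to the level sets the form is just $(\chi'\circ\psi)\,i\ddb\psi$, so any negative Levi eigenvalues of $\psi$ lying in $\ker\de\psi$ persist for \emph{every} convex increasing $\chi$. Strong $q$-convexity allows up to $q-1$ such tangential negative directions, your hypotheses do nothing to exclude them, and they really occur (for instance for the standard strongly $q$-convex exhaustion $\|z\|^2-\log\|w\|^2$ of the complement in $\C^n$ of the codimension-$q$ subspace $\{w=0\}$, whose $q-1$ negative directions near the subspace are essentially tangent to the levels). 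At such points $(i\ddb(\chi\circ\psi))^q$ is not transverse, whatever $\chi$ is, so the construction $\Omega=(i\ddb(\chi\circ\psi))^q$ fails; your step (iv) (the primitive $\theta=u\,\beta^{q-1}$ with $i\ddb\theta=\beta^q$) is correct but moot. Note finally that the paper does not prove this statement: it quotes Barlet's Proposition 3 in \cite{Bar}, and Barlet's construction of the $(q-1,q-1)$-form $\theta$ is of a genuinely different nature -- the transverse form he obtains is not the $q$-th power of an exact $(1,1)$-form, precisely because of the obstruction above; this is why the result is nontrivial for $q\ge 2$, while for $q=1$ it reduces to the Stein case.
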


This gives an interesting result:

\begin{cor} A $q-$complete manifold $X$ is $p-$K\"ahler for ever $p \geq q$, with a $\ddb-$exact form.
\end{cor}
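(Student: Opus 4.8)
The plan is to deduce the statement directly from Barlet's Proposition 6.3, applied not at the index $q$ but at the relevant larger index $p$. The one genuinely substantive remark is purely linear-algebraic: strong $q$-convexity implies strong $p$-convexity for every $p \ge q$. Indeed, if in suitable local coordinates $(i\ddb\psi)(x)$ has at least $n-q+1$ positive eigenvalues, then, since $p \ge q$ forces $n-p+1 \le n-q+1$, it has at least $n-p+1$ positive eigenvalues, which is exactly the defining condition of strong $p$-convexity. Consequently a $q$-complete manifold is $p$-complete for every $p$ with $q \le p \le n-1$ (the range in which the $p$-K\"ahler notion is defined), \emph{with the very same exhaustion function} $\psi$.

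Next I would arrange that $\psi$ fits the hypotheses of Proposition 6.3, namely that it be a smooth \emph{proper} function with values in $\R^+$, without disturbing its convexity. Since the sublevel sets $\{\psi \le c\}$ are relatively compact and $\psi$ is continuous, $\psi$ is bounded below; replacing $\psi$ by $\psi - \inf_X \psi + 1$ leaves $i\ddb\psi$ (hence the strong $q$-convexity, equivalently the strong $p$-convexity) untouched, gives a function with values in $[1,\infty)\subset\R^+$, and does not change the sublevel sets, so it is still an exhaustion and is proper as a map $X \to \R^+$ (preimages of compact subsets of $\R^+$ are closed sets contained in sublevel sets, hence compact). Alternatively one may compose $\psi$ with $t \mapsto e^{t}$: this only \emph{increases} the number of positive eigenvalues of the complex Hessian, since $i\ddb e^{\psi} = e^{\psi}\,(i\ddb\psi + i\,\de\psi \wedge \db\psi)$ and the extra term is positive semi-definite.

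Then I would apply Proposition 6.3 with $p$ in place of $q$ to this normalized function: it yields a transverse $(p,p)$-form $\Omega$ on $X$ together with a real $(p-1,p-1)$-form $\theta$ such that $\Omega = i\ddb\theta$. It remains to observe that such an $\Omega$ is automatically $d$-closed: writing $\Omega = i\de\db\theta$ and using $\de^2 = \db^2 = 0$ and $\de\db = -\db\de$, one gets $d\Omega = i(\de+\db)\de\db\theta = i\db\de\db\theta = -i\de\db\db\theta = 0$. Hence $\Omega$ is a closed transverse $(p,p)$-form which is moreover $\ddb$-exact, so $X$ is $p$-K\"ahler with a $\ddb$-exact form; and $p \ge q$ was arbitrary.

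I do not expect a real obstacle here: the corollary is essentially a repackaging of Proposition 6.3. The only points requiring (routine) care are the eigenvalue-count remark that strong $q$-convexity is inherited at all larger indices, the normalization of the exhaustion function so that it becomes proper and $\R^+$-valued while keeping the convexity hypothesis, and the one-line check that a $\ddb$-exact real $(p,p)$-form is $d$-closed.
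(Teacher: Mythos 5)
Your proposal is correct and follows exactly the route the paper intends: the corollary is stated as an immediate consequence of Barlet's Proposition 6.3, applied at index $p$ after noting that a strongly $q$-convex exhaustion is strongly $p$-convex for every $p \geq q$, and that a $\ddb$-exact real $(p,p)$-form is automatically $d$-closed. Your additional normalization of the exhaustion function and the eigenvalue count are just the routine details the paper leaves implicit.
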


Hence we have here a remarkable class of balanced manifolds (with a $\ddb-$exact form; moreover, it is not hard to verify that $\Omega$ is in fact strictly positive): that of $q-$complete manifolds ($q < n$).

\medskip

As regards $q-$convex manifolds, a similar  result does not hold, in general, also when $q=1$. Classical results due to Coltoiu \cite{Co} asserts that a $1-$convex manifold $X$, with an irreducible curve $S$ as exceptional set, is K\"ahler when $dim X \neq 3$ or when $S$ is not a rational curve. Moreover, 
we characterized in \cite{AB7}, Theorem I, precisely those $1-$convex threefolds, with $1-$dimensional exceptional set, that admit a K\"ahler metric, that is: \lq\lq When $S$ is an irreducible curve, then $X$ is K\"ahler if and only if the fundamental class of $S$ does not vanish in $H^{2n-2}_c (X)$\rq\rq .

When $dim S > 1$, we got some results in \cite{ABL} as regards $p-$K\"ahler structures, as we shall explain now.
A $1-$convex manifold $X$ of dimension $n$ is given by a desingularization $f:X \to Y$ of a Stein space $Y$ which has just a finite number of (isolated) singularities ($f$ is the Remmert reduction, see Remark 6.1.1); the exceptional set $S$, which is $f^{-1}(Sing Y)$, has dimension $k \leq n-2$ and is the maximal compact analytic subset of $X$. Obviously, $X-S$ carries an exact 1K form $\omega_S = i \ddb f^*u$ coming from $Y -$ Sing $Y$; but when $n \geq 3$, in general we cannot extend $\omega_S$ to a closed transverse form $\omega$ across $S$: indeed, there are quite simple examples of non-K\"ahler $1-$convex threefolds.

Nevertheless, we got, among others:

\begin{thm} {\rm (see Theorem 4.2, Proposition 4.4 and Theorem 4.12 in \cite{ABL})} 
\begin{enumerate}

\item Let $X$ be a complex $n-$dimensional manifold, let $S$ be an exceptional subvariety  of $X$, such that $X - S$ has a $\ddb$-exact K\"ahler form. Then $X$ is $p-$K\"ahler for every $p > dim S$, with a $\ddb$-exact $p-$K\"ahler form.

\item Let $X$ be a 1-convex manifold with exceptional set $S$ of dimension $k$. Then $X$ is $p-$K\"ahler for every $p > k$, with a $\ddb$-exact $p-$K\"ahler form; in particular, a 1-convex manifold is always balanced (with a $\ddb$-exact form). Moreover, if $k > \frac{n-1}{2}$, then $X$ is also $k-$K\"ahler.

\end{enumerate}
\end{thm}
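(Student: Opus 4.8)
The plan is to derive part~(2) from part~(1) and to prove part~(1) by a gluing argument. For part~(2), recall that a $1$-convex manifold is a proper modification $f\colon X\to Y$ of a Stein space $Y$ with finitely many isolated singularities, the exceptional set $S=f^{-1}(\mathrm{Sing}\,Y)$ being the maximal compact analytic subset of $X$, of dimension $k\le n-2$; if $u$ is a strictly plurisubharmonic exhaustion of $Y$, then $f^{*}u$ is a smooth plurisubharmonic function on $X$, strictly so off $S$, and $\omega_S:=i\ddb(f^{*}u)$ is a $\ddb$-exact transverse $(1,1)$-form on $X-S$. Hence the hypothesis of part~(1) holds with this $S$, and part~(1) yields, for every $p>k$, a $\ddb$-exact transverse (indeed strictly positive) $(p,p)$-form on $X$; taking $p=n-1>k$ gives the balanced statement. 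The last clause — that $k>\frac{n-1}{2}$ forces $k$-Kählerness — is the borderline case $p=k=\dim S$ not covered by part~(1), and I would handle it separately at the end.

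For part~(1), write $\omega=i\ddb\psi$ for the given $\ddb$-exact transverse $(1,1)$-form on $X-S$. As $\omega$ is $d$-closed, $\omega^{p}=i\ddb\bigl(\psi\,\omega^{p-1}\bigr)=:i\ddb\theta_0$ is a $\ddb$-exact strictly positive $(p,p)$-form on $X-S$. Cover the compact set $S$ by finitely many coordinate balls $U_1,\dots,U_N\Subset X$; on each $U_j$ the Euclidean form $\beta_j=i\ddb|z|^2$ is Kähler and $d$-closed, and $\beta_j^{p}=i\ddb\bigl(|z|^2\beta_j^{p-1}\bigr)=:i\ddb\theta_j$ is $\ddb$-exact and strictly positive on $U_j$. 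Thus $\{X-S,U_1,\dots,U_N\}$ is an open cover of $X$ on each member of which there is a $\ddb$-exact transverse $(p,p)$-form. Pick a subordinate partition of unity $\{\rho_j\}_{j=0}^{N}$ with $\rho_0\equiv 1$ outside a relatively compact neighbourhood $W$ of $S$, and set $\theta:=\sum_j\rho_j\theta_j\in\E^{p-1,p-1}(X)_{\R}$, a well-defined smooth form on $X$ (the terms with $j\ge1$ are smooth across $S$, and $\rho_0\theta_0$ is supported away from $S$). The Leibniz rule gives $i\ddb\theta=M+E$ with $M=\sum_j\rho_j\,i\ddb\theta_j$ and $E=\sum_j\bigl(\de\rho_j\wedge\db\theta_j+\de\theta_j\wedge\db\rho_j+i\ddb\rho_j\wedge\theta_j\bigr)$; here $M$ is, pointwise, a convex combination of transverse $(p,p)$-forms, hence transverse on all of $X$, while $E$ is a smooth $(p,p)$-form supported in the compact set $\overline{W}$.

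The crux is that $M$ and $E$ scale together under $\theta\mapsto t\theta$, so one cannot defeat $E$ merely by enlarging $M$; instead $\theta$ must be corrected by a globally defined $(p-1,p-1)$-form $\eta$, supported near $S$, with $i\ddb\eta$ killing the non-positive part of $E$ there. The existence of such an $\eta$ is the content of \cite{ABL}, Theorem~4.2: the obstruction is a class in the cohomology along $S$ of the sheaf $\H$ and its resolution of type~(4.2), equivalently a coherent-cohomology class in a degree that is forced to vanish because $\dim S<p$ (coherent cohomology of $S$, and of its neighbourhoods after passing to the direct image on the Stein space $Y$, vanishes above $\dim S$); this is precisely where the bidegree gap $p>\dim S$ enters, and I expect this cohomological vanishing to be the main obstacle. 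Granting it, $\theta+\eta$ is the required potential. For the final clause I would argue in the spirit of the characterization theorems of Sections~4--5 transplanted to the $1$-convex setting: starting from the $\ddb$-exact transverse $(k+1,k+1)$-form from part~(1) and the Kähler form $\omega_S$ on $X-S$, one shows via the duality pairing of Section~2 that the only possible obstruction to a $\ddb$-exact transverse $(k,k)$-form on $X$ would be a nonzero strongly positive $(k,k)$-current supported on $S$ lying in the relevant kernel, and that such a current cannot exist once $2k>n-1$ — this is \cite{ABL}, Theorem~4.12, and is the second substantive point; the remainder is the bookkeeping of partitions of unity and the algebra of positive forms from Section~2.
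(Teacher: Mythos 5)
Your reduction of (2) to (1) via the Remmert reduction is fine, and you correctly diagnose why a naive partition-of-unity gluing fails (the error $E$ scales together with $\theta$). But at exactly that point your argument stops: the existence of the correcting form $\eta$ --- equivalently, of a $\ddb$-exact $(p,p)$-form which is transverse on a neighbourhood of $S$ --- is the whole substance of the theorem, and you dispose of it by citing Theorem 4.2 of \cite{ABL} and ``expecting'' a cohomological vanishing along $S$; likewise the final clause ($k>\frac{n-1}{2}$ implies $k$-K\"ahler) is attributed to Theorem 4.12 of \cite{ABL} without proof. So the two points you yourself single out as the main obstacles are assumed rather than proved, which makes the proposal circular as a proof of the stated theorem. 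Moreover the mechanism you guess at (vanishing of coherent cohomology of $S$ above $\dim S$, producing $\eta$ as the triviality of an obstruction class for the sheaf $\H$) is not where the hypothesis $p>\dim S$ actually enters, and it is not clear it can be made to work as stated: on a $1$-convex manifold the relevant cohomology groups are in general only finite-dimensional, not zero.

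The paper's own proof (end of Section 8, for the clause of (2) it proves) avoids your missing step entirely and is a duality argument, not a local gluing. One pulls back a K\"ahler form $\omega'=i\ddb g$ from the Stein reduction $Y$, so that $\omega:=f^*\omega'$ is positive on $X$, transverse off $S$, and $\omega^p$ is globally $\ddb$-exact. The hypothesis $p>k$ enters through the support theorem for positive pluriharmonic currents (Theorem 1.2 of \cite{AB4}): any $T\geq 0$ with compact support, $i\ddb T=0$ and ${\rm supp}\,T\subseteq S$ must vanish since $\dim S=k<p$. Theorem 8.1(1) (separation on non-compact manifolds, with $K=S$) then yields $\Omega=i\ddb\theta$ with $\Omega_x>0$ for all $x\in S$, and the form $C\omega^p+i\ddb(\chi\theta)$, with $\chi$ a cutoff equal to $1$ near $S$ and $C\gg 0$, is $\ddb$-exact and transverse on all of $X$; note that the large constant multiplies only the globally defined $\ddb$-exact form $\omega^p$, not the correction term, so the scaling problem you identified never arises. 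To complete your write-up you would have to either reproduce a separation argument of this kind (this is also how \cite{ABL} proceeds --- the paper states those proofs are based on separation between forms and compactly supported currents) or genuinely establish the vanishing you invoke; as it stands the key steps are missing.
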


The proof of these assertions is based on classical separation's results  between forms and compactly supported currents: this is the tool that we would like to develop in what follows, looking at generalized \pkk structures: maybe this kind of use of the duality could be interesting from his own.
\bigskip

Let $X$ be a complex manifold of dimension $n$. A positive (analytic) $q-$cycle of $X$ is a finite linear combination of irreducible $q-$dimensional compact subvarieties of $X$, with positive integers as coefficients; we shall write:
$Y = \sum n_j Y_j \in C^+_q(X).$

The spaces $C^+_q(X) \subseteq SP_q(X)$ have been intensively studied in the period 1960-70 (by Andreotti, Norguet, Barlet and others), and gave a motivation to the study of positive (closed) currents with compact support on non-compact manifolds.
For example, in \cite{Bar} Barlet proved that: \lq\lq If $X$ is a $q-$complete analytic space, then $C^+_{q-1}(X)$ is a Stein space\rq\rq .

As we noted above, when $X$ is compact, all relevant cohomology groups are finite dimensional, while this is not the case in general; some remarkable cases (Stein, $q-$complete, $q-$convex) have been studied in the sixties and seventies.

We can go on following two ways: one way is to  assure finite dimensionality of the right cohomology groups, and then proceed as in Proposition 4.6. In this setting, let us recall the following result (Propositions (5.3), (5.4), but see also (5.3)', (5.4)', (5.5), (5.5)' in \cite{NoSi}, which correct a wrong statement in \cite{Bi1}):

\begin{prop} Let $X$ be a strongly $q-$convex manifold of dimension $n$, let $s > q$; then:
\begin{enumerate}

\item If $dim H_{DR}^{2s+1}(X, \C) < \infty$, then $dim H_{\de + \db}^{s,s}(X) < \infty$.

\item If $dim H_{DR}^{2s}(X, \C) < \infty$, then $dim H_{\ddb}^{s,s}(X) < \infty$.
\end{enumerate}
\end{prop}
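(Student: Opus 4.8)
The plan is to derive both finite-dimensionality statements from the Frölicher-type spectral sequence relating Dolbeault cohomology to the relevant Bott--Chern and Aeppli cohomologies, together with the known cohomology vanishing/finiteness on $q$-convex manifolds recalled in Remark 6.1.1. Concretely, I would first recall that on a strongly $q$-convex manifold one has $\dim H^{j}(X,\Omega^{r}) < \infty$ for all $r$ and all $j \geq q$, since $\Omega^{r}$ is coherent. The point of the Proposition is that these Dolbeault finiteness results, combined with the hypothesis on the \emph{de Rham} cohomology in the stated degree, propagate to the non-standard cohomologies $H_{\de+\db}^{s,s}$ (Aeppli) and $H_{\ddb}^{s,s}$ (Bott--Chern) \emph{in the middle bidegree $(s,s)$ with $s > q$}.

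For part (2), the cleanest route is to use the exact sequences of sheaves of the type (4.12)--(4.13) used in the proof of Proposition 4.6, now run on the non-compact $X$: the resolution (4.2) of the sheaf $\H$ of pluriharmonic germs expresses $H_{\ddb}^{s,s}(X,\R)$ (for $s = p$, i.e. in the notation of the sequence with $s$ in place of $p$) as a subquotient built from $H^{j}(X,\L^{i})$, $H^{j}(X,\B^{i})$ and the de Rham groups $H_{DR}^{2s}(X,\R)$, $H_{DR}^{2s-1}(X,\R)$, etc. The sheaves $\L^{i}$ are (finite) sums of $\Omega^{i+1}$, $\overline{\Omega^{i+1}}$ and fine sheaves $\E^{a,b}$; the $\B^{i}$ are sums of fine sheaves, hence acyclic. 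Thus, chasing the short exact sequences $0 \to \mathrm{Ker}\,\sigma_{i} \to (\text{sheaf}) \to \mathrm{Ker}\,\sigma_{i+1}\to 0$ exactly as in Proposition 4.6, one sees that $\dim H^{1}(X,\mathrm{Ker}\,\sigma_{2s-1}) < \infty$ provided the coherent-sheaf cohomologies $H^{j}(X,\Omega^{i+1})$ that appear are finite for the relevant $j$, which holds since $j \geq q$ is guaranteed by $s > q$ (the indices $j$ occurring in the $\L^{i}$-terms are bounded below by $\geq s - i \geq $ something $> q$; this index bookkeeping is the part requiring care). Feeding in $\dim H_{DR}^{2s}(X,\R) < \infty$ from the hypothesis then yields $\dim H_{\ddb}^{s,s}(X) < \infty$. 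Part (1) is entirely analogous, using that $H_{\de+\db}^{s,s}(X)$ fits into the dual package (the Aeppli cohomology of $(s,s)$-forms) whose building blocks are again coherent-sheaf cohomologies in degrees $\geq q$ together with $H_{DR}^{2s+1}(X)$ and $H_{DR}^{2s}(X)$; in fact one may simply invoke the already-known finiteness in part (2) together with the degeneration/finiteness of the spectral sequence connecting Bott--Chern and Aeppli, so that $\dim H_{DR}^{2s+1}(X) < \infty$ suffices.

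The key steps, in order, are: (i) recall $\dim H^{j}(X,\Omega^{r}) < \infty$ for $j \geq q$ on a $q$-convex $X$; (ii) write down, as in (4.2), the resolution of $\H$ adapted to bidegree $(s,s)$ and the induced short exact sheaf sequences; (iii) run the cohomology long exact sequences to reduce finiteness of $H^{k}(X,\mathrm{Ker}\,\sigma_{i})$ step by step, checking at each stage that the coherent contributions occur only in cohomological degrees $\geq q$ (here the assumption $s > q$ is used, not just $s \geq q$); (iv) at the last stage, bring in the de Rham hypothesis, $\dim H_{DR}^{2s}(X) < \infty$ for (2) resp.\ $\dim H_{DR}^{2s+1}(X) < \infty$ for (1), to close the estimate; (v) conclude via the identification of $H_{\ddb}^{s,s}$, resp.\ $H_{\de+\db}^{s,s}$, with the resulting finite-dimensional subquotient. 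The main obstacle is step (iii): the index bookkeeping in the resolution (4.2) must be checked carefully to verify that every coherent-sheaf cohomology group $H^{j}(X,\Omega^{i+1})$ entering the chase sits in a degree $j$ with $j \geq q$, so that Remark 6.1.1(2) applies — this is exactly where the strict inequality $s > q$ enters, and it is the only genuinely delicate point, the rest being the same diagram chase as in Proposition 4.6. (As the parenthetical reference to \cite{NoSi} warns, there is a subtlety here that was gotten wrong in \cite{Bi1}, so the degree bounds deserve explicit verification rather than a hand-wave.)
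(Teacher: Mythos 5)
The first thing to note is that the paper itself does not prove this Proposition: it is quoted from \cite{NoSi} (Propositions 5.3, 5.4), precisely because the analogous claim in \cite{Bi1} was wrong; the nearest in-paper analogues are Proposition 4.6 and the Hausdorffness results of Theorem 7.7/Proposition 7.8, which use exactly the sheaf machinery you invoke. Your overall strategy -- split the resolution (4.2) of $\H$ into short exact sequences, use $\dim H^j(X,\Omega^r)<\infty$ for $j\geq q$ on a $q$-convex manifold, and feed in the de Rham hypothesis at the end -- is viable and can be completed. But the gap in your write-up sits exactly at the point you defer: the degree bookkeeping is not a technical afterthought, it \emph{is} the content of the statement, since it is what decides \emph{which} de Rham group must be assumed finite, and you misstate it. Carried out, the chase reads: for part (1), take (4.2) with parameter $p=s$; the fine sheaves $\B^j$ give $H_{\de+\db}^{s,s}(X)\cong H^1(X,Ker\,\sigma_{2s-1})\cong\cdots\cong H^{s}(X,Ker\,\sigma_{s})$, and the sequences $0\to Ker\,\sigma_j\to\L^j\to Ker\,\sigma_{j+1}\to 0$ reduce the question to $\dim H^j(X,\Omega^r)<\infty$ in degrees $s\leq j\leq 2s$ (all $\geq q$, so Remark 6.1.1(2) applies) together with $\dim H^{2s}(X,\H)<\infty$, which by $0\to\R\to\O\to\H\to 0$ needs only $\dim H^{2s}(X,\O)<\infty$ and $\dim H^{2s+1}(X,\R)=\dim H_{DR}^{2s+1}(X)<\infty$. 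For part (2) one takes $p=s-1$, lands on $H^{2s-1}(X,\H)$, and the only topological input is $\dim H_{DR}^{2s}(X)<\infty$. Your sketch instead lists $H_{DR}^{2s-1}$, ``etc.''\ among the inputs for (2), and both $H_{DR}^{2s}$ and $H_{DR}^{2s+1}$ for (1): if $H_{DR}^{2s}$ were genuinely needed in (1), the Proposition as stated would not follow, so checking that the only non-coherent contribution is the one in the hypothesis is precisely the proof, not a hand-wave to be postponed.

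Two further points. Your fallback for part (1) -- deduce it from part (2) ``together with the degeneration/finiteness of the spectral sequence connecting Bott--Chern and Aeppli'' -- does not work: part (2) is only available under $\dim H_{DR}^{2s}(X)<\infty$, which part (1) does not grant, and you exhibit no comparison sequence transferring finiteness from $H_{\ddb}^{s,s}$ to $H_{\de+\db}^{s,s}$ using only $H_{DR}^{2s+1}$; part (1) must be (and can be) done by the direct chase above. Finally, the identification you aim at is off by one: in the parameter-$s$ numbering, $H^1(X,Ker\,\sigma_{2s-1})$ is the Aeppli group, while Bott--Chern in bidegree $(s,s)$ is reached with parameter $s-1$; and with the correct bookkeeping every coherent degree that appears is $\geq s$, so the argument only uses $s\geq q$ -- the strict inequality $s>q$ is not where the indices bite, contrary to what you suggest.
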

\medskip

But it seems more interesting  to notice that in Proposition 4.6 what we actually need is the closeness of some subspaces, to use separation's theorems
in the proof of Theorem 3.2; and this condition is equivalent to ask that the involved cohomology groups are Hausdorff (i.e. Fr\'echet), or to ask that some operators ($d, \de + \db, \ddb$) are topological homomorphisms, which is the same.

Serre pointed out (\cite{Se}, pp. 22-23) that the behavior of the operator $d = d_s : {\E}^{s}(X) \to {\E}^{s+1}(X)$ is very different from that of the operator $\db$: while $d$ is always a topological homomorphism, this is not the case for $\db$ (see the simple example given ibidem, n. 14). A sufficient condition is given in \cite{Se}, Proposition 6:
\begin{prop}  If $dim H^q(X, \Omega^p) < \infty$, then $\db : \E^{p,q-1}(X) \to \E^{p,q}(X)$ is a topological homomorphism.
\end{prop}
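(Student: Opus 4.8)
The plan is to deduce the statement from Dolbeault's theorem together with the two functional-analytic lemmas recalled at the beginning of Section 4.

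First I would recall that the Dolbeault--Grothendieck lemma yields the fine resolution
$$0 \to \Omega^p \to \E^{p,0} \stackrel{\db}{\to} \E^{p,1} \stackrel{\db}{\to} \E^{p,2} \to \cdots$$
of the sheaf $\Omega^p$ of germs of holomorphic $p$-forms; since the sheaves $\E^{p,q}$ are modules over the structure sheaf of smooth functions, they admit partitions of unity and are therefore acyclic on the paracompact manifold $X$. Hence Dolbeault's theorem gives a natural isomorphism
$$H^q(X,\Omega^p) \simeq \frac{\{ \varphi \in \E^{p,q}(X) ; \db \varphi = 0 \}}{\db\, \E^{p,q-1}(X)} .$$
In particular, the finiteness hypothesis $\dim H^q(X,\Omega^p) < \infty$ says exactly that $\db\, \E^{p,q-1}(X)$ has finite codimension in $Z := \ker(\db : \E^{p,q}(X) \to \E^{p,q+1}(X))$ (note $\db\circ\db = 0$, so $\db\,\E^{p,q-1}(X) \subseteq Z$).

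Next, recall that each $\E^{p,q}(X)$, with the topology of uniform convergence of all derivatives on compact sets, is a Fr\'echet space and that $\db$ is continuous. Thus $Z$, being the kernel of a continuous linear map, is a closed subspace of $\E^{p,q}(X)$, hence itself a Fr\'echet space. The map $\db : \E^{p,q-1}(X) \to Z$ is then a continuous linear map between Fr\'echet spaces whose image has finite codimension in $Z$; by Lemma 4.2 its image is closed in $Z$, hence closed in $\E^{p,q}(X)$. Therefore $\db : \E^{p,q-1}(X) \to \E^{p,q}(X)$ has closed range, and by Lemma 4.1 it is a topological homomorphism.

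As for the main point to be careful about: there is no serious obstacle here — the argument is essentially Serre's — but one should not be misled into thinking that finite dimensionality of a cohomology group of an arbitrary complex of Fr\'echet spaces forces the relevant image to be closed. It does so here precisely because, after identifying the correct Fr\'echet target $Z$ and checking that $\db$ maps continuously into it, Lemma 4.2 applies. The identification of the sheaf cohomology $H^q(X,\Omega^p)$ with the cohomology of the global Dolbeault complex is classical and may simply be quoted. One may also remark that, by the Closed Range Theorem, closedness of $\mathrm{Im}\,\db$ transfers automatically to the adjoint operator on currents, which is the form in which this proposition enters the non-compact characterization theorems of Section 8.
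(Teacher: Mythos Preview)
Your proof is correct and follows exactly the route the paper takes: identify $H^q(X,\Omega^p)$ with Dolbeault cohomology, observe that the space of $\db$-boundaries then has finite codimension in the closed space of $\db$-cycles, and apply Lemma 4.2 (together with Lemma 4.1) to conclude. The paper's proof is a one-line sketch of precisely this argument, so your more detailed write-up is entirely in line with it.
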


Notice that the proof of this result  is straightforward: the statement $$dim H^{p,q}_{\db} (X) < \infty$$ is equivalent to say that the space of boundaries with respect to the operator $\db$ has finite codimension in the space of cycles, which is closed. Hence $\db$ is a topological homomorphism by Lemma 4.2.

Our situation is much more complicated: nevertheless we can get the required topological homomorphisms more or less in the same hypotheses of Proposition 6.6, as we shall prove in the next section.
Since  $dim H^j(X,\Omega^k) < \infty$ when $j \geq q$ for a $q-$convex manifold $X$, as said in Remark 6.1.1, let us go on along this way.

\bigskip

\section{$H_{\de + \db}^{k,k}(X, \R)$ and $H_{\ddb}^{k,k}(X, \R)$  are Hausdorff}

It is well known that, for every complex manifold $X$,
the De Rham cohomology groups $H_{DR}^j (X, \R)$ are Hausdorff topological vector spaces, i.e. the differential operator $d$ is a topological homomorphism; but this is not the case, for instance, for the operator $\db$, as we said above.
Thus we shall study from this point of view the operators that appear in the sequence (4.2). 

\medskip

Let us recall the notation and consider some preliminary results on topological vector spaces; we refer to \cite{Ca1} and \cite{Ca2}. 

\begin{defn} {\rm  (see \cite{Ca1}, page 311)} A sheaf ${\F}$ 
of t.v.s. on a complex manifold $X$
is called a {\it Fr\'echet sheaf} if for every open subset $U$ of $X$, ${\F} (U)$ is a
Fr\'echet space, and for every open subset  $V$ of $X$ such that $U \subseteq V$, then the map $\rho^V_U : 
{\F} (V) \to {\F} (U)$ is continuous.

If ${\F}$ and ${\G}$ are Fr\'echet sheaves, and $\sigma : {\F} \to 
{\G}$ is a sheaf homomorphism, $\sigma$ is called a {\it Fr\'echet homomorphism}
if for every open subset $U$ of $X$, $\sigma (U) : {\F} (U) \to 
{\G} (U)$ is continuous.
\end{defn}

\medskip
{\bf 7.1.1 Remark.} The sequences (4.1) and (4.2) are exact sequences of Fr\'echet sheaves and 
Fr\'echet homomorphisms (recall  that ${\H} (U) = Ker\  i \partial \overline \partial :
{\E}^{0,0}_{\bf R} (U) \to {\E}^{1,1}_{\bf R} (U) $).

\bigskip

If ${\F}$ is a Fr\'echet sheaf on $X$ and ${\U}$ is a countable
covering for $X$, for every $q \geq 0$ we put on $C^q ({\U} , {\F})$ the product topology,
which is Fr\'echet. The maps $\delta^q : C^q ({\U} , {\F}) \to
C^{q+1} ({\U} , {\F})$ becomes continuous.
Moreover, 
$$H^q ({\U} , {\F}) =  {{Z^q ({\U} , {\F})} \over 
{B^q ({\U} , {\F})}}  =  {{Ker \delta^q} \over {Im \delta^{q-1}}}$$ is endowed with the quotient topology (which is Fr\'echet if and only if it is 
Hausdorff), and
$$H^q (X,{\F}) = \lim_{\to}  H^q ({\U} , {\F})$$ 
is endowed with the direct limit topology.

\begin{defn} A Fr\'echet sheaf ${\F}$ is {\it normal} if there is a Leray covering ${\U}$ of $X$ for ${\F}$ such that, for every covering
${\V}$ of $X$, there is a covering ${\W} \subset {\U}$ of $X$ which is a refinement of ${\V}$.
\end{defn}

\begin{prop} There exists a covering ${\A}$ of $X$ which is a Leray covering for all sheaves
involved in the sequences (4.11) and (4.12). Moreover, all these sheaves are normal with respect  to ${\A}$.
\end{prop}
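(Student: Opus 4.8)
The plan is to construct the covering $\A$ explicitly and then verify the two claims (Leray property and normality) sheaf by sheaf. Since every sheaf appearing in the sequences (4.1) and (4.2) is built out of the building blocks $\E^{a,b}_\R$, $\E^s_\R$, $\Omega^j$ and $\overline{\Omega^j}$ (together with kernels of the $\sigma$-maps, which are themselves resolved by such sheaves), it suffices to find one countable locally finite covering that works simultaneously for all of them. The natural choice is a covering $\A = \{A_i\}$ by coordinate polydiscs (or, more robustly, by sets biholomorphic to convex domains, or even Stein open sets that are moreover contractible and whose finite intersections are again of the same type); on a manifold such coverings exist and can be taken countable and locally finite. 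The key point making this work is that a convex (hence Stein and contractible) coordinate domain $A$ satisfies $H^k(A,\Omega^j)=0$ for $k\ge 1$ (Cartan's Theorem B) and also $H^k(A,\E^{a,b}_\R)=0$, $H^k(A,\E^s_\R)=0$ for $k\ge 1$ (the sheaves of smooth forms are fine, hence acyclic on any open set), and the same for $\overline{\Omega^j}$ by conjugation. Finite intersections of members of $\A$ are again convex coordinate domains, so the acyclicity persists on all the intersections appearing in the \v{C}ech complex.

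First I would fix such a covering $\A$ and record that every finite intersection $A_{i_0}\cap\dots\cap A_{i_r}$ is either empty or again a convex (Stein, contractible) coordinate chart. Then, to see $\A$ is Leray for each of the sheaves in (4.1)--(4.2), I would argue by the standard resolution bootstrap already used in the proof of Proposition 4.6: the sheaves $\E^{a,b}_\R$, $\E^s_\R$ are fine, so trivially Leray for $\A$; the sheaves $\Omega^j$, $\overline{\Omega^j}$ are acyclic on convex domains by Theorem B; the sheaves $\L^j$ and $\B^j$ are finite direct sums of these, hence acyclic on the elements of $\A$ and their intersections; and the kernel sheaves $\Ker\sigma_i$ (and $\H$ itself) are acyclic on these sets because the short exact sequences displayed in Proposition 4.6 exhibit them, inductively, as sitting in sequences $0\to\Ker\sigma_{i-1}\to(\text{acyclic})\to\Ker\sigma_i\to 0$, so their higher cohomology on a convex chart vanishes by the long exact sequence and downward dimension shifting, using $H^k(A,\Omega^{j+1})=0$ and $H^k(A,\overline{\Omega^{j+1}})=0$ at the bottom. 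This is exactly the computation in Proposition 4.6 but carried out on the open sets of $\A$ rather than on the compact manifold, so all the groups in question are literally $0$, not merely finite-dimensional.

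For normality, I would invoke the refinement property of the covering: on a (paracompact, second countable) manifold one can choose $\A$ to be a countable locally finite covering by convex coordinate balls with the further property that it is ``cofinal'' in the sense required by Definition 7.3 — namely, given any open covering $\V$ of $X$, passing to a common refinement and then shrinking, one extracts a subcovering $\W\subset\A$ refining $\V$; this can be arranged by taking $\A$ to consist of all sufficiently small convex coordinate balls centered at a countable dense set with rational data, which is cofinal among all coordinate-ball coverings. Since this $\A$ is simultaneously Leray for all the sheaves as above, each of those sheaves is normal with respect to $\A$ in the sense of Definition 7.3.

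I expect the main obstacle to be purely bookkeeping rather than conceptual: one must check that the \emph{kernel sheaves} $\Ker\sigma_i$ (which are not sums of standard sheaves) are acyclic on the members of $\A$ and on all their finite intersections, and that the chosen $\A$ genuinely satisfies the cofinality condition in Definition 7.3 uniformly for every sheaf in the list. The first is handled by the dimension-shifting argument via the short exact sequences of Proposition 4.6 applied on each chart; the second is a point-set topology fact about coordinate-ball coverings of a manifold. Neither introduces new ideas, but writing out the induction carefully — keeping track of which bottom-level vanishing ($\Omega^{j+1}$ versus $\overline{\Omega^{j+1}}$ versus fineness) feeds which step — is where the work lies.
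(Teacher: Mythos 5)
There is a genuine gap, and it sits exactly where the real content of the proposition lies. Your sheaf-theoretic part is fine and matches the paper: once every finite intersection $V$ of members of $\A$ is known to be Stein \emph{and} contractible, you get $H^j(V,\Omega^k)=0$ and $H^j(V,\overline\Omega^k)=0$ by Theorem B, $H^j(V,\R)=0$ by contractibility (this is indispensable, because $\H$ is resolved via $0\to\R\to\O\to\H\to 0$, so Steinness alone would not make $\A$ Leray for $\H$ or for the kernel sheaves), the smooth sheaves are fine, and the vanishing for $\H$ and $Ker\,\sigma_i$ follows by the same dimension-shifting through the short exact sequences as in Proposition 4.6; the paper does precisely this in two lines. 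What you have not proved is the existence of the covering itself: you assert that one can take coordinate polydiscs, or sets biholomorphic to convex domains, ``whose finite intersections are again of the same type,'' and that such coverings exist on any manifold. For polydiscs or chart-convex sets this is simply not automatic: two sets that are convex in the coordinates of \emph{different} charts can intersect in a set that is neither convex in any chart, nor contractible, nor obviously acyclic for $\R$. (Steinness of the intersection does survive, e.g.\ by the diagonal embedding into the product, but contractibility — the property you actually need for the constant sheaf — does not.) Constructing a covering with this intersection property is the whole point of Proposition 7.3, and the paper's proof is devoted to it: one fixes a Riemannian metric, chooses charts in which all sufficiently small geodesic balls have convex images, and then calibrates the admissible radii $a(x)$ against an exhaustion $\{K_j\}$ and the function $r(x)$ so that whenever finitely many balls of $\A$ have a common point, \emph{all of them lie in one fixed chart}, where each is Euclidean-convex; hence the intersection is convex in that chart, so Stein and contractible. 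Your proposal skips this estimate entirely, so the key hypothesis of your bootstrap is unsupported.

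A repair along your lines is possible but must be said explicitly: for instance, take geodesically convex balls (so that finite intersections are geodesically convex, hence contractible) that are small enough to be Euclidean-convex in a holomorphic chart (so Stein), and note that intersections are Stein as well; or simply reproduce the paper's radius calibration forcing mutually intersecting balls into a single chart. Similarly, your normality argument should be tied to the actual covering: in the paper $\A$ consists of \emph{all} balls $B(x,a)$ with $a<a(x)$, so it contains arbitrarily small balls around every point and any covering $\V$ admits a refinement $\W\subset\A$; a fixed countable family of ``rational'' balls would need the same smallness property checked against the threshold $a(x)$.
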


\begin{proof} We adapt a construction given in \cite{Ca2}.
 Fix a riemannian metric on $X$, and denote by $B(x,a)$ 
the geodesic ball of center $x$ and radius $a$.
 Notice that, given a (holomorphic) chart $(U,\varphi)$ of $X$, 
it is possible to choose $U$ such that every geodesic ball $B(x,a) \subset U$
has a convex image in 
$\varphi (U)$; in this case, $B(x,a)$ is an open Stein subset of $U$,
because its image in $\varphi (U)$ is holomorphically convex.

Choose a locally finite open covering of $X$, 
${\U} = \{ U_i , \varphi_i \}_{i \in I}$, with the above property; for every
$x \in X$, call
$$r(x) = sup \{ a / B(x,a) \subset U_i \  \rm{for \  some}\  i \in I \}.$$ 
Then choose an exaustion sequence $\{ K_j \}_{j \geq 1}$ of compact subsets of $X$ and
a decreasing sequence of real numbers $\{ c_j \}_{j \geq 1}$ such that 
$0 < c_j < d(K_j , X - (K_{j+1})^{int})$.

For every $x \in X$, let us denote by $l(x)$ the index such that
$x \in K_{l(x)} - K_{l(x) - 1}$, and let us fix $a(x) > 0$ such that
$$B(x,a(x)) \subset \bigcup_{x \in U_i} U_i \quad , \quad a(x) < min \  \{ 
{1\over 3}\  c_{l(x)} , {1\over 3}\  \min_{K_{l(x) + 2}} r \} .$$

Let ${\A} = \{ B(x,a) \}_{x \in X , \ a < a(x)}$: we shall check that every finite 
intersection of elements of ${\A}$ is Stein and contractible. First of all,
every element of ${\A}$ satisfies the request, so that also the finite 
intersections of elements of ${\A}$ are Stein.

Take 
$$B = B(x_0 , a_0 ) \cap \dots \cap B(x_p , a_p ) \not= \emptyset \quad , \quad p \geq 1 ;$$
it is enough to show that $B(x_0 , a_0 ) \cup \dots \cup B(x_p , a_p )$
is contained in a fixed chart $(U_0, \varphi_0)$ of the covering ${\U}$,
because in that case every ball becomes, via the biholomorphic map $\varphi_0$,
a convex set in $\varphi_0 (U_0)$, so that also the intersection of the image of the balls is convex there, and
also contractible.

It is easy to check that, since $B \not= \emptyset$, for every $h , k \in \{0 , \dots , p \}$, 
the intergers $l_k := l(x_k)$ and $l_h := l(x_h)$ satisfy $\vert l_k - l_h \vert \leq 1$.

Moreover, for every $ k \in \{1 , \dots , p \} \ , \ \forall x \in B(x_k , a_k )$ it holds

$$dist \  (x,x_0) < 2a_k + a_0 < {2\over 3} \min_{K_{l_k + 2}} r  + 
{1\over 3}\ \min_{K_{l_0 + 2}} r \leq \min_{K_{l_0 }} r  \leq r(x_0) ,$$

\noindent since $x_0 \in K_{l_0}$. So for all $k , B(x_k , a_k ) \subseteq B(x_0 , r(x_0) ) \subset U_i$.

We proved that, if $V$ is a finite intersection of elements of ${\A}$,
 $V$ is Stein and contractible; hence for all $j > 0$, 
$H^j (V,{\R}) = 0$ and $H^j (V,{\G}) = 0 \ \forall \  {\G} \in
Coh (X)$. From the sequence (4.11), also $H^j (V,{\H}) = 0$ for all $j > 0$, which implies
 that in (4.12)  $H^j (V,{K}er \enskip \sigma_1) = 0$ for all $j > 0$, and so on, as in the proof of Proposition 4.6.
\end{proof}

\begin{prop} {\rm (see \cite{Ca1} pages 312-313)} Let
$$0  \to {\F} '  \to {\F} \to  {\F} ''  \to 0$$
\noindent be an exact sequence of Fr\'echet sheaves and Fr\'echet
homomorphisms. If ${\U}$ is a countable Leray covering of $X$ for
${\F} '$, then  the maps
$\delta_*^q : H^q ({\U} , {\F} '') \to
H^{q+1} ({\U} , {\F} ')$ are continuous. If moreover $H^{q+1} ({\U} ,
 {\F})$ is Hausdorff, then $\delta_*^q$ is a topological homomorphism.

Let ${\G}$ be a normal Fr\'echet sheaf and ${\V}$ a countable Leray
covering of $X$ for ${\G}$; then $ H^q ({\V} , {\G}) \to
H^q (X , {\G})$ \  is a topological isomorphism for every $q$.
\end{prop}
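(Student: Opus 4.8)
The plan is to treat both assertions as formal homological algebra carried out in the category of Fréchet spaces and continuous linear maps, the single analytic engine being the open mapping theorem for Fréchet spaces (which, via Lemma 4.1, is exactly what upgrades algebraic statements to topological ones). For a countable covering the cochain groups $C^q(\U,\cdot)=\prod(\cdot)(U_{i_0\cdots i_q})$ carry the product topology and are Fréchet, the coboundaries $\delta^q$ are continuous, and $H^q(\U,\cdot)=Z^q/B^q$ carries the quotient topology. The Leray hypothesis on $\F'$ is used first to turn the three Čech complexes into a short exact sequence: on each $U_{i_0\cdots i_q}$ the vanishing $H^1(U_{i_0\cdots i_q},\F')=0$ forces $\F(U_{i_0\cdots i_q})\to\F''(U_{i_0\cdots i_q})$ to be onto, so $0\to C^\bullet(\U,\F')\xrightarrow{\iota}C^\bullet(\U,\F)\xrightarrow{\pi}C^\bullet(\U,\F'')\to 0$ is exact, with $\iota$ a closed embedding and, by the open mapping theorem, $\pi^q$ an open continuous surjection.

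To get continuity of $\delta_*^q$ I would realise the snake-lemma recipe continuously. Form the Fréchet space $W:=\{(c,z)\in C^q(\U,\F)\times Z^q(\U,\F''):\pi^q c=z\}$ (closed in the product), whose second projection $\mathrm{pr}_2\colon W\to Z^q(\U,\F'')$ is a continuous surjection, hence an open quotient map by the open mapping theorem. The assignment $(c,z)\mapsto[(\iota^{q+1})^{-1}\delta^q c]\in H^{q+1}(\U,\F')$ is continuous, since $\delta^q c\in\mathrm{Im}\,\iota^{q+1}$ and $\iota^{q+1}$ is a closed embedding (so its inverse on the image is continuous), and it is constant on the fibres of $\mathrm{pr}_2$; it therefore descends through $\mathrm{pr}_2$ to a continuous map on $Z^q(\U,\F'')$ which kills $B^q(\U,\F'')$, and hence factors through $H^q(\U,\F'')$ to give the continuous $\delta_*^q$.

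For the topological-homomorphism claim I would factor $\delta_*^q$ through the cochain level. On the closed Fréchet subspace $D:=\{c\in C^q(\U,\F):\delta^q c\in\mathrm{Im}\,\iota^{q+1}\}$ put $\nu(c):=(\iota^{q+1})^{-1}\delta^q c\in Z^{q+1}(\U,\F')$; then $\delta_*^q$ is $\nu$ post-composed with the quotient $Z^{q+1}(\U,\F')\to H^{q+1}(\U,\F')$ and pre-composed with the open surjections $D\xrightarrow{\pi^q}Z^q(\U,\F'')\to H^q(\U,\F'')$, so $\delta_*^q$ is open onto its image as soon as $\nu$ is. Here the hypothesis enters decisively: by Lemma 4.1, $H^{q+1}(\U,\F)$ is Hausdorff precisely when $B^{q+1}(\U,\F)=\mathrm{Im}\,\delta^q$ is closed, and $\mathrm{Im}\,\nu=\{z'\in Z^{q+1}(\U,\F'):\iota^{q+1}z'\in B^{q+1}(\U,\F)\}$ is then the preimage of a closed set under the continuous $\iota^{q+1}$, hence closed and Fréchet; the open mapping theorem applied to the continuous surjection $\nu\colon D\to\mathrm{Im}\,\nu$ makes $\nu$, and therefore $\delta_*^q$, a topological homomorphism. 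This step — extracting openness of $\delta_*^q$ from closedness of a coboundary space — is the \emph{main obstacle}, because without the Hausdorff hypothesis the image space need not be complete and the open mapping theorem is unavailable.

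For the last assertion I would combine Leray's theorem with normality. Since $\V$ is a Leray covering for $\G$, the canonical map $H^q(\V,\G)\to H^q(X,\G)=\lim_{\to}H^q(\U',\G)$ is an algebraic isomorphism and is continuous by the construction of the direct-limit topology; it remains to prove it is open. Normality supplies a fixed Leray covering $\U$ such that the covering subfamilies $\W\subseteq\U$ are cofinal among all coverings, and each such $\W$ is again Leray. Restriction of cochains gives continuous refinement maps $C^q(\W,\G)\to C^q(\W',\G)$ for $\W'\subseteq\W$, inducing algebraic isomorphisms $H^q(\W,\G)\to H^q(\W',\G)$ throughout this cofinal system; normality is tailored exactly so that these refinement maps are topological isomorphisms (the restriction maps on the Fréchet cochain complexes being topological homomorphisms, with the Leray vanishing forcing bijectivity). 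Hence the direct limit is topologically constant on the cofinal subsystem, so $H^q(X,\G)$ is topologically isomorphic to $H^q(\U,\G)$, and therefore to $H^q(\V,\G)$ for every Leray covering $\V$, which is the claim.
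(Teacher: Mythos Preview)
The paper does not actually prove this proposition; it simply quotes the result from Cassa \cite{Ca1}, so there is no in-paper argument to compare against. Judged on its own merits, your treatment of the first two assertions is essentially a correct unpacking of the snake lemma in the Fr\'echet category: the construction of $W$ and the factorisation through the open surjection $\mathrm{pr}_2$ cleanly yields continuity of $\delta_*^q$, and your identification $\mathrm{Im}\,\nu=(\iota^{q+1})^{-1}\bigl(B^{q+1}(\U,\F)\bigr)$ together with the Hausdorff hypothesis and the open mapping theorem is exactly the mechanism that forces $\delta_*^q$ to be a topological homomorphism.

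The third assertion, however, is where your argument has a genuine gap. You assert that for subcoverings $\W,\W'\subseteq\U$ the refinement maps on cohomology are topological isomorphisms, with the parenthetical justification that ``the restriction maps on the Fr\'echet cochain complexes [are] topological homomorphisms, with the Leray vanishing forcing bijectivity.'' But a continuous bijection $H^q(\W,\G)\to H^q(\W',\G)$ between quotients that need not be Hausdorff is not automatically open---the open mapping theorem does not apply to non-Fr\'echet targets---so bijectivity plus continuity is not enough. Moreover, the cofinal system of subcoverings of $\U$ is directed by \emph{refinement}, not by inclusion, so the transition maps are not in general the simple product projections you seem to have in mind; two subcoverings $\W_1,\W_2\subseteq\U$ need not have $\W_1\cap\W_2$ as a common subcovering. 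One way to repair this, in the special case where $\W'\subseteq\W$ so that the cochain map \emph{is} a projection, is to show first that the restriction $Z^q(\W,\G)\to Z^q(\W',\G)$ is surjective (using surjectivity of $\bar p$ on cohomology and lifting coboundaries), whence it is an open map of Fr\'echet spaces; but you then still need to organise the cofinal system so that such inclusions suffice, and this is precisely the content you have left unargued.
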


\begin{cor} Let
$$0 \to  {\F} '  \to  {\F} \to  {\F} ''  \to  0$$
\noindent be an exact sequence of normal Fr\'echet sheaves 
(with respect to  a countable
Leray covering ${\U}$ of $X$) and Fr\'echet
homomorphisms. If  $H^{q} ({\U} , {\F}) = H^{q+1} ({\U} , {\F}) = 0$,
then the coboundary map 
$\ \delta^q_* : H^{q} (X, {\F} '') \to H^{q+1} (X, {\F} ') \ $
is a topological isomorphism.
\end{cor}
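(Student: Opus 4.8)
The plan is to derive this corollary directly from Proposition 7.6 together with the long exact cohomology sequence associated to the short exact sequence of sheaves. First I would recall that, given the short exact sequence $0 \to \F' \to \F \to \F'' \to 0$ of Fr\'echet sheaves and Fr\'echet homomorphisms, one has the Cech long exact cohomology sequence computed with respect to the Leray covering $\U$; in the stretch around degree $q$ it reads
$$H^q(\U,\F) \to H^q(\U,\F'') \stackrel{\delta^q_*}{\to} H^{q+1}(\U,\F') \to H^{q+1}(\U,\F).$$
Under the hypothesis $H^q(\U,\F) = H^{q+1}(\U,\F) = 0$, exactness forces $\delta^q_*$ to be a bijective linear map between $H^q(\U,\F'')$ and $H^{q+1}(\U,\F')$. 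So the only real content is that $\delta^q_*$ is also a \emph{topological} isomorphism, i.e. that it and its inverse are continuous.

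The continuity of $\delta^q_*$ is exactly the first assertion of Proposition 7.6 (which applies since $\U$ is a countable Leray covering of $X$ for $\F'$, being so for all three sheaves). For the reverse direction, I would invoke the second half of the first assertion of Proposition 7.6: since $H^{q+1}(\U,\F) = 0$ is in particular Hausdorff, $\delta^q_*$ is a topological homomorphism, hence an open map onto its image; being also bijective, its set-theoretic inverse is continuous, so $\delta^q_*$ is a topological isomorphism $H^q(\U,\F'') \cong H^{q+1}(\U,\F')$ at the level of the fixed covering $\U$.

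Finally I would pass from the covering $\U$ to the manifold $X$. Because $\F'$ and $\F''$ are normal Fr\'echet sheaves with respect to $\U$, the last assertion of Proposition 7.6 gives topological isomorphisms $H^q(\U,\F'') \cong H^q(X,\F'')$ and $H^{q+1}(\U,\F') \cong H^{q+1}(X,\F')$. Composing these with the covering-level isomorphism $\delta^q_*$ yields the desired topological isomorphism $\delta^q_* : H^q(X,\F'') \to H^{q+1}(X,\F')$, where one checks along the way that the map so obtained is indeed the coboundary map of the long exact sequence on $X$ (this is automatic from the compatibility of the direct-limit maps with the connecting homomorphisms). The one point requiring a little care — the main obstacle, such as it is — is bookkeeping: making sure the hypothesis $H^q(\U,\F) = H^{q+1}(\U,\F) = 0$ (vanishing of \emph{reduced} Cech cohomology on $\U$, as produced in the proof of Proposition 7.4) is genuinely the input that both kills the neighbouring terms in the exact sequence and supplies the Hausdorff property needed to apply Proposition 7.6, so that no separate finiteness or closed-range argument is needed here.
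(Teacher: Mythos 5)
Your proof is correct and is essentially the argument the paper intends for this corollary: the long exact \v{C}ech sequence over $\U$ (exact at the cochain level because $\U$ is a Leray covering for $\F'$) shows $\delta^q_*$ is bijective, the Hausdorffness of $H^{q+1}(\U,\F)=0$ makes it a topological homomorphism and hence a topological isomorphism at the level of $\U$, and normality transfers this to $X$ via the topological isomorphisms $H^{q}(\U,\F'')\simeq H^{q}(X,\F'')$ and $H^{q+1}(\U,\F')\simeq H^{q+1}(X,\F')$. The only corrections are bookkeeping: every fact you invoke (continuity of $\delta^q_*$, the topological-homomorphism statement under the Hausdorff hypothesis, and the normality isomorphisms) is the content of Proposition 7.4, not Proposition 7.6, and the vanishing hypothesis concerns ordinary (not \lq\lq reduced\rq\rq) \v{C}ech cohomology of the covering, which in the applications is supplied by the acyclicity coming from Proposition 7.3 rather than being produced inside that proof.
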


\begin{prop} Let
$$0 \to {\F} '  \to  {\F} \stackrel{\sigma}{\to}  {\F} ''  \to  0$$
\noindent be an exact sequence of normal Fr\'echet sheaves 
(with respect to a countable Leray covering $\U$ of $X$), and Fr\'echet
homomorphisms.

(i) If $dim \  H^q (X,{\F}) < \infty$, then $H^q (X,{\F})$ is Hausdorff.

(ii) If $dim \  H^q (X,{\F}) < \infty$ and  $ H^{q+1} (X,{\F} ')$ is
Hausdorff, then  $ H^{q} (X,{\F} '')$ is Hausdorff.
\end{prop}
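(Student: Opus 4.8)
The plan is to deduce both parts from the Fréchet-sheaf machinery set up in this section, together with the classical facts recalled earlier (Lemma 4.1, Lemma 4.2, and the long exact cohomology sequence of a short exact sequence of sheaves). Part (i) is the easy half: if $\dim H^q(X,\F) < \infty$, then $H^q(X,\F) = Z^q/B^q$ (at the level of a Leray covering $\U$, which computes $H^q$ topologically isomorphically by Proposition 7.5 applied to the normal sheaf $\F$) is a finite-dimensional quotient of the Fréchet space $Z^q(\U,\F) = \mathrm{Ker}\,\delta^q$. I would invoke the fact that a finite-codimensional subspace of a Fréchet space that contains a closed subspace is itself closed — more precisely, since $B^q = \mathrm{Im}\,\delta^{q-1}$ has finite codimension in the closed subspace $Z^q$, Lemma 4.2 (applied to the corestriction $\delta^{q-1}: C^{q-1}(\U,\F) \to Z^q(\U,\F)$, which has finite-codimensional image) shows $\delta^{q-1}$ is a topological homomorphism, hence $B^q$ is closed in $Z^q$ and the quotient is Hausdorff.

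For part (ii) I would use the long exact sequence
$$\cdots \to H^q(X,\F) \stackrel{\sigma_*}{\to} H^q(X,\F'') \stackrel{\delta^q_*}{\to} H^{q+1}(X,\F') \to \cdots$$
associated to $0 \to \F' \to \F \to \F'' \to 0$. The idea is that $H^q(X,\F'')$ sits in a short exact sequence
$$0 \to \mathrm{Coker}(\sigma_*) \to H^q(X,\F'') \to \mathrm{Ker}(\delta^q_{*,\,\mathrm{next}}) \to 0,$$
but more efficiently: $\delta^q_*$ has closed kernel once we know it is a topological homomorphism, and the image of $\sigma_*: H^q(X,\F) \to H^q(X,\F'')$ is finite-dimensional (being a quotient of the finite-dimensional $H^q(X,\F)$), hence closed. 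Then $H^q(X,\F'')$ is an extension of the (Hausdorff, by hypothesis on $H^{q+1}(X,\F')$ and the fact that $\mathrm{Ker}\,\delta^q_*$ is a closed subspace of it) image of $\delta^q_*$ by the closed finite-dimensional subspace $\mathrm{Im}\,\sigma_*$; an extension of Hausdorff by closed-finite-dimensional inside a quotient topology is Hausdorff. The technical glue is Proposition 7.5: because $\U$ is a countable Leray covering for $\F'$ and the sheaves are normal Fréchet, the maps $\delta^q_*$ at covering level are continuous, and they are topological homomorphisms provided $H^{q+1}(\U,\F)$ is Hausdorff — and $H^{q+1}(\U,\F) \cong H^{q+1}(X,\F)$, which we may arrange to be finite-dimensional (hence Hausdorff by part (i)) by a dimension-shifting/induction argument, or which we can simply assume is covered because the only place $H^{q+1}(X,\F)$ enters is through this Hausdorffness, and finite-dimensionality of $H^q(X,\F)$ typically travels with analogous finiteness one degree up in the concrete applications; in the abstract statement one would state the hypothesis as literally needed.

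The main obstacle I anticipate is the careful bookkeeping of topologies on the cohomology-level maps: one must ensure that $\delta^q_*$ is not merely continuous but a topological homomorphism (so that its image carries the subspace topology and its kernel is genuinely closed), and this is exactly where Proposition 7.5 requires $H^{q+1}(\U,\F)$ to be Hausdorff. Matching the hypothesis "$\dim H^q(X,\F) < \infty$" to a statement that also controls $H^{q+1}(\U,\F)$ is the delicate point; in the intended applications this is harmless because the relevant $\F$ in the sequence (4.12) are built from coherent sheaves and $\E^{p,q}_\R$, whose cohomology is finite-dimensional in the appropriate range on $q$-convex manifolds (Remark 6.1.1), so one reduces everything to part (i). I would therefore structure the proof as: (a) prove (i) via Lemma 4.2 at the covering level plus the topological isomorphism $H^q(\U,\cdot) \cong H^q(X,\cdot)$; (b) prove that $\sigma_*$ and $\delta^q_*$ are topological homomorphisms using Proposition 7.5 and part (i); (c) conclude (ii) by the extension argument, citing Lemma 4.1 to pass between "closed range" and "topological homomorphism" wherever convenient.
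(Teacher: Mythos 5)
Your part (i) is correct and is exactly the paper's argument (via Lemma 4.2 applied to $\delta^{q-1}:C^{q-1}(\U,\F)\to Z^{q}(\U,\F)$, after transporting everything to the covering level by the topological isomorphism $H^{q}(\U,\cdot)\simeq H^{q}(X,\cdot)$ of Proposition 7.4 — note it is 7.4, not 7.5, that you need). The genuine gap is in part (ii): you run the argument on the cohomology spaces themselves, and at the decisive step you claim that $\mathrm{Im}\,\sigma_{*}$ is ``finite-dimensional, hence closed'' and that the resulting extension is Hausdorff. In a topological vector space that is not yet known to be Hausdorff this is false: finite-dimensional subspaces need not be closed, and even a closed finite-dimensional subspace need not be Hausdorff in the induced topology (the closure of $\{0\}$ is a linear subspace carrying the indiscrete topology, and nothing prevents it from being finite-dimensional and contained in $\mathrm{Im}\,\sigma_{*}$). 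Concretely, the subspace topology on $\mathrm{Ker}\,\delta^{q}_{*}=\mathrm{Im}\,\sigma_{*}$ inside $H^{q}(\U,\F'')=Z^{q}(\U,\F'')/B^{q}(\U,\F'')$ is the quotient topology of $\mathrm{Ker}(\delta^{q}_{*}\circ\pi)$ by $B^{q}(\U,\F'')$, so it is Hausdorff if and only if $B^{q}(\U,\F'')$ is closed — which is precisely the statement to be proved; and continuity of $\sigma_{*}$ only says that the (Hausdorff) topology coming from $H^{q}(X,\F)$ is finer than the subspace topology, so Hausdorffness does not descend. Your extension argument is therefore circular: the open-mapping-type input for Fréchet spaces must enter somewhere, and it cannot enter at the level of the possibly non-Hausdorff quotients.

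The way to close the gap (and the paper's route) is to work at the level of cocycles, where everything is Fréchet. Since $\U$ is a Leray covering for $\F'$, the map $C^{q-1}(\U,\F)\to C^{q-1}(\U,\F'')$ is onto, so $\sigma_{q}(B^{q}(\U,\F))=B^{q}(\U,\F'')$, and $\sigma_{q}(Z^{q}(\U,\F))=\mathrm{Ker}(\delta^{q}_{*}\circ\pi)$, where $\pi:Z^{q}(\U,\F'')\to H^{q}(\U,\F'')$. The latter kernel is closed in $Z^{q}(\U,\F'')$ (hence Fréchet) because $\delta^{q}_{*}\circ\pi$ is continuous (Proposition 7.4, first part — only continuity is needed, not that $\delta^{q}_{*}$ be a topological homomorphism) and $H^{q+1}(\U,\F')$ is Hausdorff by hypothesis, so $\{0\}$ is closed there. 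By part (i), $B^{q}(\U,\F)$ is closed and of finite codimension in $Z^{q}(\U,\F)$; applying Lemma 4.3 to the continuous surjection $\sigma_{q}:Z^{q}(\U,\F)\to\mathrm{Ker}(\delta^{q}_{*}\circ\pi)$ with $N=B^{q}(\U,\F)$ yields that $B^{q}(\U,\F'')$ is closed, i.e. $H^{q}(\U,\F'')\simeq H^{q}(X,\F'')$ is Hausdorff. In particular your anticipated difficulty — needing $H^{q+1}(\U,\F)$ to be Hausdorff so that $\delta^{q}_{*}$ is a topological homomorphism, and possibly strengthening the hypotheses — does not arise; the proposition holds exactly as stated.
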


\begin{proof} By Proposition 7.4, there is a topological isomorphism
$ H^q ({\U} , {\F}) \to H^q (X , {\F})$ for every $q$ , hence we can argue on 
$H^q ({\U} , {\F}) =$ $ {Z^q ({\U} , {\F})} \over 
{B^q ({\U} , {\F})} $.

If $q=0$, $H^0 ({\U} , {\F}) = {Z^0 ({\U} , {\F})} = Ker  \delta^0$ is Hausdorff; if $q > 0$, we get (i) by Lemma 4.2 
since the map $\delta^{q-1} : C^{q-1} ({\U} , {\F}) \to
Z^{q} ({\U} , {\F}) \ $ is continuous between Fr\'echet spaces.
\smallskip

To prove (ii), consider the following diagram (for $q > 0$):
\smallskip

$C^{q-1} ({\U} , {\F})  \rightarrow  C^{q-1} ({\U} , {\F} '')  \to 0$

$\ \ \ \ \ \ \ \downarrow \ \ \ \ \ \  \ \ \ \ \ \ \ \downarrow$

$Z^{q} ({\U} , {\F}) \stackrel{\sigma_q}{\to}  Z^{q} ({\U} , {\F} '')$
 
\smallskip

In this diagram, $\sigma_q (B^{q} ({\U} , {\F})) =
B^{q} ({\U} , {\F} '')$  and  $\sigma_q (Z^{q} ({\U} , {\F}))
 = Ker \  (\delta^q_* \circ \pi)$, 
where $\pi : Z^{q} ({\U} , {\F} '') \to H^{q} ({\U} , {\F} '')$ and
$\delta^q_* : H^{q} ({\U} , {\F} '') \to H^{q+1} ({\U} , {\F} ')$.

By  part (i), $B^{q} ({\U} , {\F})$ is closed and has
finite codimension in $Z^{q} ({\U} , {\F})$; moreover, 
$Ker \  (\delta^q_* \circ \pi)$ is a closed subspace of $Z^{q} ({\U} , {\F} '')$
 (because $H^{q+1} ({\U} , {\F} ')$ is Hausdorff), hence it is Hausdorff. Thus
we can apply Lemma 4.3 to $\sigma_q : Z^{q} ({\U} , {\F}) \to 
Ker \  (\delta^q_* \circ \pi)$, with $N = B^{q} ({\U} , {\F})$; this
implies that $\sigma_q (B^{q} ({\U} , {\F} )) = B^{q} ({\U} , {\F} '')$
is closed, hence $H^{q} ({\U} , {\F} '')$ is Hausdorff.
\end{proof}

\medskip
Let us use now these results to get some information on cohomology; a particular case of the following theorem is Corollary 2.5 in \cite{ABL} (compare also with Proposition 6.6).

\begin{thm}  Let $X$ be a complex manifold, let $q \geq 0$.
\begin{enumerate} 
\item If $dim \ H^j (X, \Omega^{2q+1-j} ) < \infty \quad \forall j \in \{q+1, \dots, 2q+1 \}$, then 
$H_{\ddb}^{q+1,q+1}(X, \R)$ is Hausdorff.

\item If $dim \ H^j (X, \Omega^{2(q+1)-j} ) < \infty \quad \forall j \in \{q+1, \dots, 2(q+1) \}$, then $H_{\de + \db}^{q+1,q+1}(X, \R)$ is Hausdorff.
\end{enumerate}
\end{thm}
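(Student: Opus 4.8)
The plan is to identify each of the two cohomology groups with an $H^1$ of a suitable kernel sheaf drawn from a resolution of $\H$ of the type (4.2), and then to propagate Hausdorffness down that resolution by iterated use of Proposition 7.7(ii). Concretely, for (1) I would use the resolution (4.2) with its parameter $p$ set equal to $q$ (and the resolution (4.1) in the degenerate case $q=0$), and for (2) with parameter $p=q+1$; in either case denote by $\mathcal{K}_m:=Ker\,\sigma_m$ the kernel subsheaves occurring in the associated short exact sequences of type (4.12), each of which is a closed subsheaf of the corresponding term of the resolution, hence a normal Fréchet sheaf by Proposition 7.3. From the short exact sequence $0\to\mathcal{K}_{2q}\to\E^{q,q}_\R\stackrel{i\ddb}{\to}\mathcal{K}_{2q+1}\to 0$ and the softness of $\E^{q,q}_\R$, the associated cohomology sequence identifies $H^1(X,\mathcal{K}_{2q})$ topologically with the cokernel of $i\ddb\colon\E^{q,q}(X)_\R\to\{\,d\text{-closed real }(q{+}1,q{+}1)\text{-forms}\,\}$, i.e. with $H_{\ddb}^{q+1,q+1}(X,\R)$ endowed with its quotient topology; so it suffices to show that $H^1(X,\mathcal{K}_{2q})$ is Hausdorff. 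Likewise, from $0\to\mathcal{K}_{2q+1}\to\B^{2q+1}\stackrel{\sigma_{2q+1}}{\to}\mathcal{K}_{2q+2}\to 0$, where $\B^{2q+1}=(\E^{q+1,q}\oplus\E^{q,q+1})_\R$ and $\sigma_{2q+1}(\beta,\overline\beta)=\db\beta+\de\overline\beta$, one gets $H^1(X,\mathcal{K}_{2q+1})\cong H_{\de+\db}^{q+1,q+1}(X,\R)$ topologically, and it suffices to show the left-hand side is Hausdorff.

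Next I would run the descent. By Proposition 7.3 there is a single countable Leray covering $\A$ with respect to which every sheaf occurring in (4.11) and (4.12) is a normal Fréchet sheaf, so Proposition 7.7(ii) applies to all the short exact sequences involved: from $0\to\mathcal{K}_{m}\to A_m\to\mathcal{K}_{m+1}\to 0$, with $A_m$ the $m$th term of the resolution, it reduces ``$H^{d}(X,\mathcal{K}_{m+1})$ is Hausdorff'' to ``$\dim H^{d}(X,A_m)<\infty$'' together with ``$H^{d+1}(X,\mathcal{K}_{m})$ is Hausdorff''. Iterating from $H^1(X,\mathcal{K}_{2q})$ in case (1) (resp.\ from $H^1(X,\mathcal{K}_{2q+1})$ in case (2)) the cohomological degree $d$ rises by one while the kernel index drops by one at each step, and the term $A_m$ that appears is in turn: one of the fine sheaves $\B^j$ or $\E^{\bullet,\bullet}_\R$, for which $H^{d}(X,A_m)=0$ when $d\ge1$ and the finiteness requirement is automatic; one of the sheaves $\L^j=\bigl(\Omega^{j+1}\oplus(\oplus_{k=0}^{j}\E^{j-k,k})\oplus\overline\Omega^{j+1}\bigr)_\R$, for which, the $\E$-summands being acyclic, $\dim H^{d}(X,\L^j)<\infty\iff\dim H^{d}(X,\Omega^{j+1})<\infty$; and finally the sheaf $\H$, which I would handle by one more application of Proposition 7.7(ii) to $0\to\R\to\O\to\H\to 0$ of (4.11): there the finiteness condition is $\dim H^{d}(X,\O)<\infty$, while the remaining requirement ``$H^{d+1}(X,\R)$ is Hausdorff'' holds unconditionally because $H^{\bullet}(X,\R)$ is computed by the de Rham complex, on which $d$ is always a topological homomorphism (\cite{Se}).

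It then remains to match the indices. A direct count of the cohomological degrees at which the non-fine terms are met shows that the finiteness conditions produced by the iteration are, in case (1), exactly $\dim H^{j}(X,\Omega^{2q+1-j})<\infty$ for $j=q+1,\dots,2q+1$ (the endpoint $j=2q+1$ coming from $\O=\Omega^{0}$ in (4.11)), and, in case (2), exactly $\dim H^{j}(X,\Omega^{2(q+1)-j})<\infty$ for $j=q+1,\dots,2(q+1)$ (again the endpoint coming from $\O$). These are precisely the standing hypotheses, so every requirement thrown up by the descent is available; the induction closes, and $H^1(X,\mathcal{K}_{2q})$ — hence $H_{\ddb}^{q+1,q+1}(X,\R)$ — in case (1), and $H^1(X,\mathcal{K}_{2q+1})$ — hence $H_{\de+\db}^{q+1,q+1}(X,\R)$ — in case (2), is Hausdorff.

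The only real difficulty I foresee is exactly this bookkeeping: one has to check that each invocation of Proposition 7.7(ii) falls on the right cohomological degree of the right sheaf, so that the $\L^j$- and $\O$-conditions generated by the recursion coincide term by term with the hypotheses of the theorem, and that the chain of reductions terminates cleanly on the sequence $0\to\R\to\O\to\H\to 0$, whose $\R$-end contributes nothing. All the functional-analytic content is already isolated in Propositions 7.3--7.7 (together with Serre's observation that $d$ on forms is a topological homomorphism), so once the indices are aligned the argument is essentially formal.
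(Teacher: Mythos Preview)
Your proposal is correct and follows essentially the same route as the paper's own proof: identify the Bott--Chern and Aeppli groups with an $H^1$ of a kernel sheaf via the connecting homomorphism (using Proposition 7.4 and the acyclicity of the fine sheaves $\E^{\bullet,\bullet}_\R$, $\B^j$), then propagate Hausdorffness along the short exact sequences (4.12) and finally (4.11), invoking at the end that Čech $H^\bullet(X,\R)$ is Hausdorff because it is topologically isomorphic to de Rham cohomology. The index bookkeeping you describe matches the paper's exactly.

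Two minor points. First, you repeatedly cite ``Proposition 7.7(ii)'' for the inductive step, but 7.7 is the theorem you are proving; the result you mean is Proposition 7.6(ii). Second, your initial topological identification $H^1(X,\mathcal{K}_{2q})\cong H_{\ddb}^{q+1,q+1}(X,\R)$ does not follow from softness alone---softness gives only the algebraic isomorphism. The ``topological'' part needs Proposition 7.4 (the connecting map $\delta_*^0$ is a topological homomorphism because $H^1$ of the fine middle sheaf vanishes, hence is Hausdorff); this is exactly what the paper spells out, and is implicit in your closing remark that the functional-analytic content sits in Propositions 7.3--7.6. The paper organizes the descent slightly differently---it first uses Corollary 7.5 to pass through the fine $\B^j$'s by topological \emph{isomorphisms}, and only switches to Proposition 7.6(ii) once it reaches the $\L^j$'s---whereas you apply 7.6(ii) uniformly (the finiteness hypothesis being automatic at the fine steps). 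Both presentations yield the same chain of reductions and the same list of finiteness conditions on $H^j(X,\Omega^{\,\cdot\,})$.
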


\begin{proof} a) Take a countable covering of X as in Proposition 7.3,
 and consider the exact sequence (4.11). It gives:
$$\dots \ \to \  H^j (X, {\O})  \ \to \  H^j (X, {\H})
  \ \to \  H^{j+1} (X, \R)  \ \to \  H^{j+1} (X, {\O})\  \to \dots .$$
Notice that $H^{j+1} (X, \R)$ is a \u Cech cohomology group,  isomorphic
to the De Rham cohomology group $H_{DR}^{j+1} (X, \R)$, which is Hausdorff. This isomorphism is given by 
a composition of coboundary maps, coming out from the short exact sequences
associated to the sequence 
$$\ 0 \ \to \ \R \ \to \ {\E}^0 \ \to \ {\E}^1 \ \to \ \dots $$
(see f.i. \cite{GH}, p. 44).

By Corollary 7.5,\ $H^{j+1} (X, \R) \simeq H_{DR}^{j+1} (X, \R)$
is a topological isomorphism, so that  $H^{j+1} (X, \R)$ \ is Hausdorff for every $j \geq 0$; when $dim \ H^j (X, \O ) < \infty$,
 by Proposition 7.6 (ii), also $H^j (X, {\H})$ 
is Hausdorff. In our hypotheses, this is true when $j=2q+1$ in case (1) and when $j=2q+2$ in case (2).
\bigskip

b) If $q = 0$, let us recall the exact sequences (4.1) and (4.4):
$$
 0 \to \H \stackrel{j}{\to}  {\E}^{0,0}_\R   \stackrel{i \ddb}{\to}  {\E}^{1,1}_\R \stackrel{d}{\to} ({\E}^{2,1} \oplus {\E}^{1,2})_\R  
 \stackrel{d}{\to} {\E}^{4}(M)_\R \to \dots 
$$
 $$
  0 \to \H \stackrel{\sigma_{-1}}{\to}    ( \Omega^1 \oplus \E^{0,0} \oplus \overline \Omega^1)_{\R} \stackrel{\sigma_{0}}{\to} (\E^{1,0} \oplus \E^{0,1})_{\R} \stackrel{\sigma_{1}}{\to} \E^{1,1}_{\R} \stackrel{\sigma_{2}}{\to} \E^{2,2}_{\R} \stackrel{\sigma_{3}}{\to} ({\E}^{3,2} \oplus {\E}^{2,3})_\R \to \dots
$$
and also recall that
$$H_{\ddb}^{1,1}(X, \R) = \frac{(Ker d)(X)}{(Im\  i \ddb)(X)}, \ \ 
H_{\de + \db}^{1,1}(X, \R) = \frac{(Ker \sigma_{2})(X)}{(Im \sigma_{1})(X)}.$$
Thus we have in the first case
$$
 0 \to \H \stackrel{j}{\to}  {\E}^{0,0}_\R   \stackrel{i \ddb}{\to}  Ker d  \to 0 
$$
 which gives:
$$\ 0 \ \to \ H^0 (X, {\H}) \ \to \ H^0 (X, {\E}^{0,0}_{\R}) \  \stackrel{(i \ddb)_0}{\to}
 \ H^0 (X, Ker \ d) \ \stackrel{\delta_*^0}{\to}  \ H^1 (X, {\H}) \ \to \ 0 \ .$$

Take a suitable covering $\A$ of $X$ as in Proposition 7.3;
by Corollary 7.5, $\delta_*^0$ is a topological homomorphism, and it gives a topological isomorphism 
$$H^1 (X, {\H})   \simeq 
 {{H^0(X, Ker \ d)}\over{Ker \ \delta_*^0}} = 
 {{H^0(X, Ker \ d)}\over{Im \ i \ddb_0}}= H_{\ddb}^{1,1}(X, \R) .$$

Since we know that $dim \ H^1 (X, \O ) < \infty$ we get 
that, if $q=0$, $H_{\ddb}^{q+1,q+1}(X, \R)$ is Hausdorff, as in a).

In the second case, we have:
 $$
  0 \to \H \stackrel{\sigma_{-1}}{\to}    ( \Omega^1 \oplus \E^{0,0} \oplus \overline \Omega^1)_{\R} \stackrel{\sigma_{0}}{\to} Ker \sigma_1 \to 0
$$
 $$
  0 \to  Ker \sigma_1 \to (\E^{1,0} \oplus \E^{0,1})_{\R} \stackrel{\sigma_{1}}{\to} Ker \sigma_2 \to 0 ,$$
so that: 
$$\ \dots \ \to \ H^1 (X, {\H}) \ \to \ H^1 (X, ( \Omega^1 \oplus \E^{0,0} \oplus \overline \Omega^1)_{\R}) \  \to
 \ H^1 (X, Ker \sigma_1) \ \stackrel{\delta_*^1}{\to}  \ H^2 (X, {\H}) \ \to \dots .$$
and 
$$\ 0 \ \to \ H^0 (X, Ker \sigma_1) \ \to \ H^0 (X,  (\E^{1,0} \oplus \E^{0,1})_{\R}) \ \to
 \ H^0 (X, Ker \sigma_2) \ \stackrel{\delta_*^0}{\to}  \ H^1 (X, Ker \sigma_1) \ \to \ 0 \ .$$
From the second sequence, we get as before, using the topological homomorphism $\delta_*^0$, that 
$$H^1 (X, Ker \sigma_1)   \simeq 
 {{H^0(X, Ker \sigma_2)}\over{Ker \ \delta_*^0}} = 
 {{H^0(X, Ker \sigma_2)}\over{H^0(X, Im \sigma_1)}}= H_{\de + \db}^{1,1}(X, \R) .$$
From the first sequence, using Proposition 7.6(ii), we get that 
$H^1 (X, Ker \sigma_1)$ is Hausdorff when 
$dim \ H^2 (X, \O ) < \infty$ and $dim \ H^1 (X, \Omega^1 ) < \infty$, which is precisely our hypothesis.

\medskip

c) If $q > 0$, let us consider first of all $H_{\ddb}^{q+1,q+1}(X, \R)  = \frac{(Ker \sigma_{2q+1})(X)}{(Im \sigma_{2q})(X)}.$

Using the short exact sequences in (4.12), we get as before ($\delta_*^0$ becomes a topological homomorphism):
$$H_{\ddb}^{q+1,q+1}(X, \R) \simeq H^1 (X, {K}er \  \sigma_{2q}) .$$
Repeating this feature back and back, we get a topological isomorphism
$$H_{\ddb}^{q+1,q+1}(X, \R) \simeq H^{q+1} (X, {K}er \  \sigma_{q}).$$
From here on, we have to take in account the sheaves $\Omega^j$.

Consider in (4.12) the first short exact sequence
$$\ 0 \  \to \  {\H} \  \to \  {\L}^{0} \  \to \  {K}er \enskip \sigma_{1} \  \to \  0 \ ,$$
which gives:
$$\ \dots \ \to \ H^{2q} (X, {\L}^{0}) \ \to \ H^{2q} (X, {K}er \  \sigma_1) \ \stackrel{\delta_*^{2q}}{\to}
 \ H^{2q+1} (X, {\H}) \ \to \ \dots \ .$$

Since by the assumption 
$dim \ H^{2q} (X, \Omega^{1} ) < \infty$ and  $dim \ H^{2q+1} (X, \O ) < \infty$, so that  $H^{2q+1} (X, {\H})$ is Hausdorff, by Proposition 7.6
$H^{2q} (X, {K}er \  \sigma_1)$ is Hausdorff.
\smallskip

Using  the second short exact sequence in (4.12), we can prove
 that also $H^{2q-1} (X, {K}er \  \sigma_2)$ is Hausdorff, and so on until 
$H^{q+1} (X, {K}er \  \sigma_{q}) \simeq H_{\ddb}^{q+1,q+1}(X, \R)$, which becomes  Hausdorff.
What is needed at every step is contained in the hypothesis:

$dim \ H^j (X, \Omega^{2q+1-j} ) < \infty \quad \forall j \in \{q+1, \dots, 2q+1 \}$.
\medskip

As for $H_{\de + \db}^{p,p}(X, \R) = \frac{(Ker \sigma_{2p})(X)}{(Im \sigma_{2p-1})(X)},$ when $p:= q+1 > 1$,
use the short exact sequences in (4.12), starting from 
 $$\ 0 \  \to \  {K}er \  \sigma_{2p-1} \  \to \  {\B}^{2p-1} \ \to \  {K}er \  \sigma_{2p} \  \to \  0 \ ,$$ 
which gives:
$$\ 0 \ \to \ H^0 (X, {K}er \  \sigma_{2p-1}) \ \to \ H^0 (X, {\B}^{2p-1}) \ \to
 \ H^0 (X, {K}er \  \sigma_{2p}) \ \stackrel{\delta_*^0}{\to} 
\ H^1 (X, {K}er \  \sigma_{2p-1}) \ \to \ 0 .$$

Since as above $\delta_*^0$ becomes a topological homomorphism, we get 
$$H_{\ddb}^{p,p}(X, \R) \simeq H^1 (X, {K}er \  \sigma_{2p-1}) .$$
Repeating this feature, we get  topological isomorphisms
$$H_{\ddb}^{p,p}(X, \R) \simeq H^1 (X, {K}er \  \sigma_{2p-1})  \simeq H^2 (X, {K}er \  \sigma_{2p-2}) \simeq \dots  \simeq H^{p} (X, {K}er \  \sigma_{p}).$$

On the other hand, consider in (4.12) the first short exact sequence
$$\ 0 \  \to \  {\H} \  \to \  {\L}^{0} \  \to \  {K}er \enskip \sigma_{1} \  \to \  0 \ ,$$
which gives
$$\ \dots \ \to \ H^{2p-1} (X, {\L}^{0}) \ \to \ H^{2p-1} (X, {K}er \  \sigma_1) \ \stackrel{\delta_*^{2p-1}}{\to}
 \ H^{2p} (X, {\H}) \ \to \ H^{2p} (X, {\L}^{0}) \ \to \ \dots \ .$$

Since by the hypothesis, 
$dim \ H^j (X, \Omega^{2p-j} ) < \infty \quad \forall j \in \{p, \dots, 2p \}$,
and  thus $H^{2p} (X, {\H})$ is Hausdorff, then
$H^{2p-1} (X, {K}er \  \sigma_1)$ is Hausdorff by Proposition 7.6.
\smallskip

Using the next exact sequences in (4.12) we get that 
$H^{p} (X, {K}er \ \sigma_{p})$ is Hausdorff, hence we conclude that
$H_{\ddb}^{p,p}(X, \R)$ is Hausdorff.
\end{proof}
\medskip

\begin{prop} Let $X$ be a complex manifold, let $q \geq 0$.
If 
$$dim \ H^j (X, \Omega^{2q+2-j} ) < \infty \quad \forall j \in \{q+2, \dots, 2q+2 \},$$ 
then $W^{q+2,q+1}_{\R} (X) := {{(Ker \  \sigma_{2q+2} ) (X)}\over{(Im \  \sigma_{2q+1} ) (X)}}$ is Hausdorff.

\end{prop}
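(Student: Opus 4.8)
The plan is to run, one rung higher, the argument used for $H_{\ddb}^{q+1,q+1}$ in the proof of Theorem 7.8(1). I would work with the resolution (4.2) in which the integer parameter is taken to be $q$ (and with (4.1) when $q=0$), and fix a covering $\A$ of $X$ as in Proposition 7.3, a Leray covering for all the sheaves occurring there, each of them normal with respect to $\A$. In that sequence the term $(\E^{q+2,q+1}\oplus\E^{q+1,q+2})_\R$ occupies the position immediately after the term $\E^{q+1,q+1}_\R$ which produces $H_{\ddb}^{q+1,q+1}$; accordingly $W^{q+2,q+1}_\R(X)=\frac{(Ker\,\sigma_{2q+2})(X)}{(Im\,\sigma_{2q+1})(X)}$ is the cohomology of the complex of global sections of (4.2) at that spot.

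First I would peel off the fine sheaves. Since $Im\,\sigma_{2q+1}=Ker\,\sigma_{2q+2}$ as sheaves (exactness of the resolution) and $\E^{q+1,q+1}_\R$ is fine, the short exact sequence $0\to Ker\,\sigma_{2q+1}\to\E^{q+1,q+1}_\R\to Ker\,\sigma_{2q+2}\to 0$ gives, exactly as in Theorem 7.8 — the coboundary $\delta_*^0$ being a topological homomorphism by Proposition 7.4, because $H^1(X,\E^{q+1,q+1}_\R)=0$ — a topological isomorphism $W^{q+2,q+1}_\R(X)\simeq H^1(X,Ker\,\sigma_{2q+1})$. Iterating along the fine sheaves $\E^{q,q}_\R,\ \B^{2q-1},\dots,\B^q$, for each of which Corollary 7.5 applies (the relevant $H^j$ and $H^{j+1}$ of the fine middle sheaf vanish, so the coboundary maps are topological isomorphisms), I obtain a topological isomorphism
\[
W^{q+2,q+1}_\R(X)\ \simeq\ H^{q+2}\bigl(X,\,Ker\,\sigma_{q}\bigr),
\]
where $Ker\,\sigma_q$ is the kernel subsheaf of $\sigma_q$ inside $\B^q$; when $q=0$ the same reasoning through (4.1) gives directly $W^{2,1}_\R(X)\simeq H^2(X,\H)$, after which the last step below is immediate.

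Then I would prove that $H^{q+2}(X,Ker\,\sigma_q)$ is Hausdorff, starting from $\H$ and descending. The De Rham group $H^{2q+3}(X,\R)\simeq H_{DR}^{2q+3}(X,\R)$ is Hausdorff (the isomorphism being a composition of topological coboundary isomorphisms, as in the proof of Theorem 7.8), so from $0\to\R\to\O\to\H\to 0$ and $\dim H^{2q+2}(X,\O)<\infty$ Proposition 7.6(ii) gives that $H^{2q+2}(X,\H)$ is Hausdorff. Next I would go down the chain of short exact sequences $0\to\H\to\L^0\to Ker\,\sigma_1\to 0$, $0\to Ker\,\sigma_1\to\L^1\to Ker\,\sigma_2\to 0,\ \dots,\ 0\to Ker\,\sigma_{q-1}\to\L^{q-1}\to Ker\,\sigma_q\to 0$: the step through $\L^{q-1-i}$ is applied in cohomological degree $q+2+i$, and because $\L^{q-1-i}=(\Omega^{q-i}\oplus(\oplus_k\E^{q-1-i-k,k})\oplus\overline\Omega^{q-i})_\R$ differs from $(\Omega^{q-i}\oplus\overline\Omega^{q-i})_\R$ only by fine summands, the finiteness needed, $\dim H^{q+2+i}(X,\L^{q-1-i})<\infty$, is precisely $\dim H^{q+2+i}(X,\Omega^{q-i})<\infty$, that is $\dim H^{j}(X,\Omega^{2q+2-j})<\infty$ for $j=q+2+i$ ranging over $\{q+2,\dots,2q+1\}$ — which, together with the case $j=2q+2$ already used for $H^{2q+2}(X,\H)$, is exactly the hypothesis. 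By Proposition 7.6(ii) this propagates Hausdorffness successively from $H^{2q+2}(X,\H)$ down to $H^{q+2}(X,Ker\,\sigma_q)$, and combining with the isomorphism of the previous paragraph, $W^{q+2,q+1}_\R(X)$ is Hausdorff.

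The only substantial point — everything else being the formal machinery of Lemmas 4.1--4.3, Propositions 7.4 and 7.6 and Corollary 7.5 — is the presence in (4.2) of the non-fine sheaves $\L^j$, carrying the holomorphic bundles $\Omega^{j+1}$: the whole content of the hypothesis is that the finite-dimensionality holds in exactly the degrees needed to apply Proposition 7.6(ii) through all of them, and the part requiring care is the index bookkeeping that pairs $\L^{q-1-i}$ with the degree $q+2+i$, so that the sheaves $\Omega^0,\Omega^1,\dots,\Omega^q$ occur precisely in the degrees $2q+2,2q+1,\dots,q+2$.
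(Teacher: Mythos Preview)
Your proposal is correct and follows precisely the approach the paper intends: the paper's own proof is the single line ``The proof is very similar to that of Theorem 7.7, taking in account also the proof of Proposition 4.6,'' and you have carried out exactly that program, shifting the argument for $H_{\ddb}^{q+1,q+1}$ one step further along the resolution (4.2) and carefully tracking the index bookkeeping so that the $\L^{q-1-i}$ contribute $\Omega^{q-i}$ in cohomological degree $q+2+i$, matching the hypothesis $\dim H^j(X,\Omega^{2q+2-j})<\infty$ for $j\in\{q+2,\dots,2q+2\}$. The only slip is the label: the result you call Theorem 7.8 is Theorem 7.7 in the paper.
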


\begin{proof} The proof is very similar to that of Theorem 7.7, taking in account also the proof of Proposition 4.6.
\end{proof}
\medskip

By comparing Theorem 7.7 and Proposition 7.8 with Proposition 4.6, one gets the following result:

\begin{cor}  Let $X$ be a complex manifold, let $p \geq 1, q := p-1 \geq 0$.
\begin{enumerate} 
\item If $dim \ H^j (X, \Omega^{2p+1-j} ) < \infty \quad \forall j \in \{p+1, \dots, 2p+1 \}$, then $\sigma_{2p}$ is a topological homomorphism, thus
$Im \sigma_{2p}$ is a closed subspace.

\item If  $dim \ H^j (X, \Omega^{2q+2-j} ) < \infty \quad \forall j \in \{q+2, \dots, 2q+2 \}$, then $\sigma_{2q+1}$ is a topological homomorphism, thus $Im \sigma_{2q+1}$ is a closed subspace.

\item If $dim \ H^j (X, \Omega^{2p-j} ) < \infty \quad \forall j \in \{p, \dots, 2p \}$, then $\sigma_{2p-1}$ is a topological homomorphism, thus $Im \sigma_{2p-1}$ is a closed subspace.
\end{enumerate}
\end{cor}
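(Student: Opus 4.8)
The plan is to obtain Corollary 7.9 by feeding the Hausdorffness statements of Theorem 7.7 and Proposition 7.8 into Lemma 4.1, essentially running backwards the reduction already carried out in Proposition 4.6. The first ingredient is the identification of the three relevant ``$Ker/Im$'' quotients. By the definitions of Bott--Chern and Aeppli cohomology recalled in Section 2 (in the very form used in the proof of Proposition 4.6), one has, at the level of global sections and with the notation of (4.5)--(4.10),
$$H_{\ddb}^{p+1,p+1}(X,\R) = \frac{(Ker\,\sigma_{2p+1})(X)}{(Im\,\sigma_{2p})(X)},\qquad H_{\de+\db}^{p,p}(X,\R) = \frac{(Ker\,\sigma_{2p})(X)}{(Im\,\sigma_{2p-1})(X)},$$
while $W^{q+2,q+1}_\R(X) = (Ker\,\sigma_{2q+2})(X)/(Im\,\sigma_{2q+1})(X)$ by definition, all three carrying the quotient topology induced from the relevant Fr\'echet space of forms.

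The second, purely formal, step is the following. In each of the three quotients the numerator is the kernel of a continuous operator between Fr\'echet spaces of forms, hence a closed subspace of the space of forms which is the target of the operator $\sigma$ to be controlled, and it contains $Im\,\sigma$; so the quotient is Hausdorff if and only if $Im\,\sigma$ is closed in that numerator, if and only if — the numerator being itself closed in the target space — $Im\,\sigma$ is closed in the full target space, if and only if (Lemma 4.1) $\sigma$ is a topological homomorphism. Hence it is enough, in each of the three cases, to know that the corresponding cohomology group is Hausdorff, and this is supplied by Theorem 7.7 and Proposition 7.8 once the parameters are matched: for (1) I would invoke Theorem 7.7(1) with its internal parameter taken equal to $p$, so that its hypothesis becomes $dim\,H^j(X,\Omega^{2p+1-j}) < \infty$ for $j \in \{p+1,\dots,2p+1\}$ and its conclusion the Hausdorffness of $H_{\ddb}^{p+1,p+1}(X,\R)$, whence $Im\,\sigma_{2p}$ is closed; for (3) I would use Theorem 7.7(2) with internal parameter $p-1$, whose hypothesis is then $dim\,H^j(X,\Omega^{2p-j}) < \infty$ for $j \in \{p,\dots,2p\}$ and whose conclusion is that $H_{\de+\db}^{p,p}(X,\R)$ is Hausdorff, whence $Im\,\sigma_{2p-1}$ is closed; for (2) Proposition 7.8, already stated with the Corollary's own $q$, gives directly that $W^{q+2,q+1}_\R(X)$ is Hausdorff under $dim\,H^j(X,\Omega^{2q+2-j}) < \infty$ for $j \in \{q+2,\dots,2q+2\}$, whence $Im\,\sigma_{2q+1}$ is closed.

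I do not expect a genuine obstacle here, since all the analytic weight already lies in Theorem 7.7 and Proposition 7.8; the one point that requires care is the index arithmetic, because the auxiliary parameter inside Theorem 7.7 is also denoted $q$ and must be set equal to $p$ in case (1) and to $p-1$ in case (3), which is not the global substitution $q=p-1$ used in the statement of the Corollary. The only other thing worth making explicit, though elementary, is the passage from Hausdorffness of the cohomology quotient — that is, closedness of $Im\,\sigma$ inside the kernel appearing as numerator — to closedness of $Im\,\sigma$ in the full space of forms; this uses nothing beyond the fact that that kernel is itself closed, after which Lemma 4.1 converts closedness of the image into the asserted topological homomorphism.
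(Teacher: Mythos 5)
Your proposal is correct and follows essentially the same route as the paper, which deduces the Corollary precisely by comparing Theorem 7.7 and Proposition 7.8 with the quotient descriptions in Proposition 4.6: the Hausdorffness of $H_{\ddb}^{p+1,p+1}$, $W^{q+2,q+1}_{\R}$ and $H_{\de+\db}^{p,p}$ means the image of the relevant $\sigma$ is closed in the (closed) kernel appearing as numerator, hence closed in the Fr\'echet space of forms, and Lemma 4.1 converts this into the topological homomorphism statement. Your parameter matching (internal parameter $p$ in Theorem 7.7(1) for case (1), $p-1$ in Theorem 7.7(2) for case (3), and $q$ as given in Proposition 7.8 for case (2)) is exactly the intended bookkeeping.
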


\bigskip
\section{ Duality on non compact manifolds}

For a generic manifold $X$, ${\E}_{p,p}'(X)_{\R} \neq {\D}_{p,p}'(X)_{\R}$; hence to get informations as before about the existence of a suitable $(p,p)-$form, we need to fix a compact $K$ in $X$ as a \lq\lq bound\rq\rq \ for the support of the currents. In this setting, we give
the following list of characterization theorems, whose geometric signification we shall explain with a couple of examples after the proofs.  Notice that we use here transverse forms and strongly positive currents, but we could have chosen also the other notions of positivity. Let us denote the closure of a linear subspace $L$ (in the weak topology) by $(L)^-$.

\begin{thm} Let $X$ be a complex manifold of dimension $n \geq 2$, let $K$ be a compact subset of $X$; let $1 \leq p \leq n-1$ and denote $q := p-1$ in the subscript of the operators. Then:

\begin{enumerate}
\item There  is a  real $(p,p)-$form $\Omega$ on $X$ such that $\Omega \in Im \sigma_{2q}$    and $\Omega_x >0 \ \forall x \in K \ \iff $
there are no non trivial currents $T \in {\E}_{p,p}'(X)_{\R}$, $T \geq 0$, $T \in Ker \sigma_{2q}' ,\  supp T \subseteq K.$ 

\item There  is a  real $(p,p)-$form $\Omega$ on $X$ such that $\Omega \in Im d_{2p-1}$    and $\Omega_x >0 \ \forall x \in K \ \iff$
there are no currents $R \in {\E}_{2p}'(X)_{\R}$, $R \in Ker d_{2p-1}',\  supp R \subseteq K$  with $R_{p,p} := T \geq 0, T \neq 0$.

\item Suppose that $\sigma_{2p-1}$ is a topological homomorphism, so that $Im \sigma_{2p-1}$ is a closed subspace of ${\E}^{p,p}(X)_{\R}$. Then: 

There  is a  real $(p,p)-$form $\Omega$ on $X$ such that $\Omega \in Im \sigma_{2p-1} \cap Ker \sigma_{2q+1}$    and $\Omega_x >0 \ \forall x \in K \ \iff$
there are no non trivial currents $T \in {\E}_{p,p}'(X)_{\R}$, $T \in ((Im \sigma_{2q+1}')^- + Ker \sigma_{2p-1}')^-,\ $  $T \geq 0$,  $supp T \subseteq K.$

\item There  is a  real $(p,p)-$form $\Omega$ on $X$ such that $\Omega \in Im \sigma_{2p-1}$    and $\Omega_x >0 \ \forall x \in K \ \iff $
there are no non trivial currents $T \in {\E}_{p,p}'(X)_{\R}$, $T \geq 0$, $T \in Ker \sigma_{2p-1}'$ and $ supp T \subseteq K  $.

\end{enumerate}
\end{thm}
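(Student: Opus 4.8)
The plan is to derive all four equivalences from the Hahn--Banach--type Theorems 4.4 and 4.5, following the pattern of Theorems 3.2 and 5.1, the new features being that the admissible currents must be supported in the fixed compact $K$ and that $\Omega$ need only be transverse on $K$. I use throughout that $\E^{p,p}(X)_\R$ with the $\mathcal{C}^\infty$-topology is a reflexive Fr\'echet space whose dual, with the weak topology, is ${\E}_{p,p}'(X)_\R$, so a separating continuous functional on a space of currents is a smooth form; that for a continuous linear $f$ between such spaces $(\mathrm{Ker}\,f')^\perp=(\mathrm{Im}\,f)^-$ and $(\mathrm{Im}\,f')^\perp=\mathrm{Ker}\,f$; and, crucially, that the set of $(p,p)$-forms $\Psi$ with $\Psi_x>0$ for all $x\in K$ is \emph{open}, convex and nonempty, since compactness of $K$ converts transversality on $K$ into a uniform bound $\Psi_x>c\,\gamma^p_x$ on $K$ for a fixed hermitian form $\gamma$ and some $c>0$. (For $K=\emptyset$ all four statements are trivial: take $\Omega=0$ on one side, $T=0$ on the other.) In each case one implication is immediate: if both objects existed the pairing $(T,\Omega)$ would vanish --- in (1) since $\Omega\in\mathrm{Im}\,\sigma_{2q}=\{i\ddb\eta\}$ and $\sigma_{2q}'T=0$; in (4) since $\Omega\in\mathrm{Im}\,\sigma_{2p-1}$ and $\sigma_{2p-1}'T=0$; in (3) since $\Omega\in\mathrm{Ker}\,\sigma_{2q+1}\cap\mathrm{Im}\,\sigma_{2p-1}$ annihilates $\mathrm{Im}\,\sigma_{2q+1}'+\mathrm{Ker}\,\sigma_{2p-1}'$ and hence, by continuity, the nested closures defining the current space; in (2) since $(R,\Omega)=(R_{p,p},\Omega)=\langle R,d\Gamma\rangle=0$ for $R\in\mathrm{Ker}\,d_{2p-1}'$ --- whereas $T\geq0$, $T\neq0$, $\mathrm{supp}\,T\subseteq K$ and $\Omega_x>0$ on $K$ give $(T,\Omega)\geq c\,(T,\gamma^p)>0$ after cutting $\Omega$ off near $K$ and using $(T,\gamma^p)>0$ for nonzero strongly positive $T$, as in the proof of Theorem 3.2.

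For the converse in cases (1) and (4), fix $\gamma$ and set $\tilde P_K:=\{T\in{\E}_{p,p}'(X)_\R : T\geq0,\ \mathrm{supp}\,T\subseteq K,\ (T,\gamma^p)=1\}$; this is convex and \emph{weak-$*$ compact}, since strongly positive currents have order $0$, so the ones supported in $K$ with bounded mass form a weak-$*$ compact set, and it is nonempty (normalise the Dirac current $\delta_x\otimes(\sigma_p^{-1}V\wedge\overline V)$ at a point $x\in K$, $V$ simple). The hypothesis reads $\tilde P_K\cap L'=\emptyset$, where $L'=\mathrm{Ker}\,\sigma_{2q}'$ in (1) and $L'=\mathrm{Ker}\,\sigma_{2p-1}'$ in (4), each closed as the kernel of a continuous operator. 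The Separation Theorem 4.5 then yields $\Omega_0\in\E^{p,p}(X)_\R$ with $(T,\Omega_0)=0$ on $L'$ and $(T,\Omega_0)\geq\alpha>0$ on $\tilde P_K$; testing against the normalised Dirac currents at the points of $K$ and invoking Proposition 2.2 shows $\Omega_0$ is transverse at every point of $K$, while $(T,\Omega_0)=0$ on $L'$ gives $\Omega_0\in(L')^\perp$, which equals $(\mathrm{Im}\,\sigma_{2q})^-$ in (1) and $(\mathrm{Im}\,\sigma_{2p-1})^-$ in (4). Since no topological-homomorphism hypothesis is available here, the last step is the perturbation trick: the open set of forms transverse on $K$ contains $\Omega_0$, which lies in the closure of $\mathrm{Im}\,\sigma_{2q}$ (resp. $\mathrm{Im}\,\sigma_{2p-1}$), so that open set already meets that subspace, and any point of the intersection is the desired $\Omega$. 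This is exactly where it matters that $\Omega$ need only be positive on $K$ and that $K$ is compact.

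Case (3) runs the same way, with $L':=((\mathrm{Im}\,\sigma_{2q+1}')^-+\mathrm{Ker}\,\sigma_{2p-1}')^-$, closed by construction, so the hypothesis $\tilde P_K\cap L'=\emptyset$ and Theorem 4.5 produce a form $\Omega$ transverse on $K$ with $\Omega\in(L')^\perp=(\mathrm{Im}\,\sigma_{2q+1}')^\perp\cap(\mathrm{Ker}\,\sigma_{2p-1}')^\perp=\mathrm{Ker}\,\sigma_{2q+1}\cap(\mathrm{Im}\,\sigma_{2p-1})^-$. Now the standing assumption that $\sigma_{2p-1}$ is a topological homomorphism (a situation guaranteed, e.g., by Corollary 7.9(3)) makes $(\mathrm{Im}\,\sigma_{2p-1})^-=\mathrm{Im}\,\sigma_{2p-1}$, so $\Omega\in\mathrm{Ker}\,\sigma_{2q+1}\cap\mathrm{Im}\,\sigma_{2p-1}$ as required, and no perturbation is needed.

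Case (2) is the one I expect to be the main obstacle, and it needs the non-compact Hahn--Banach Theorem 4.4 rather than the Separation Theorem, since the relevant convex set --- the $2p$-forms whose $(p,p)$-component is transverse on $K$ --- is open but not relatively compact. Working in $\E^{p,p}(X)_\R$, the failure of the form condition says the subspace $F:=\mathrm{Im}\,d_{2p-1}\cap\E^{p,p}(X)_\R$ misses the open convex set of $(p,p)$-forms transverse on $K$, so Theorem 4.4 gives a current $T\in{\E}_{p,p}'(X)_\R$ vanishing on $F$ and positive on that set; letting $\varepsilon\to0$ in $(T,\Psi+\varepsilon\chi\gamma^p)>0$ (for $\Psi$ weakly positive and $\chi\equiv1$ near $K$) gives $T\geq0$, $T\neq0$, and testing against $(p,p)$-forms vanishing near $K$ gives $\mathrm{supp}\,T\subseteq K$. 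The genuinely delicate point, which has no analogue in the other three cases, is to promote this $(p,p)$-current to a \emph{closed} $2p$-current $R\in\mathrm{Ker}\,d_{2p-1}'$ with $R_{p,p}=T$ and $\mathrm{supp}\,R\subseteq K$: this amounts to extending $T$, which annihilates $\mathrm{Im}\,d_{2p-1}\cap\E^{p,p}$, to a functional on $\E^{2p}(X)_\R$ annihilating all of $\mathrm{Im}\,d_{2p-1}$, and for that one uses that $d_{2p-1}$ is always a topological homomorphism (so $\mathrm{Im}\,d_{2p-1}$ is closed), a diagram chase on the decomposition $\E^{2p}=\E^{p,p}\oplus\bigl(\bigoplus_{(a,b)\neq(p,p)}\E^{a,b}\bigr)$, and a further Hahn--Banach extension. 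The remaining analytic inputs --- closedness of $\mathrm{Im}\,\sigma_{2q}$, $\mathrm{Im}\,\sigma_{2p-1}$, $\mathrm{Im}\,d_{2p-1}$ and Hausdorffness of the Bott--Chern and Aeppli groups wherever they are actually needed --- are precisely those supplied by Corollary 7.9.
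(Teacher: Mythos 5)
Your treatment of cases (1), (3) and (4) is correct, but it runs the duality in the opposite direction from the paper. The paper proves the contrapositive ``no form $\Rightarrow$ current'': it applies the Hahn--Banach Theorem 4.4 in the \emph{form} space, separating the open convex set $A=\{\Theta\in\E^{p,p}(X)_\R:\ \Theta_x>c\gamma^p_x \ \forall x\in K\}$ from the linear subspace $Im\,\sigma_{2q}$ (resp. $Im\,\sigma_{2p-1}$, resp. $Im\,\sigma_{2p-1}\cap Ker\,\sigma_{2q+1}$), so the separating functional is directly the current $T$, with positivity, non-triviality and $supp\,T\subseteq K$ read off from $A$; no closed-range input is needed in (1) and (4), and in (3) the closedness of $Im\,\sigma_{2p-1}$ is used to compute $(Im\,\sigma_{2p-1}\cap Ker\,\sigma_{2q+1})^{\perp}=((Im\,\sigma_{2q+1}')^-+Ker\,\sigma_{2p-1}')^-$. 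You instead separate in the \emph{current} space, the compact basis $\tilde P_K$ against the closed kernel, obtaining a form in $(Im)^-$, and then use the openness (for compact $K$) of the set of forms transverse on $K$ to perturb from the closure of the image into the image; in (3) you spend the topological-homomorphism hypothesis at that last step instead. This is a legitimate alternative: your perturbation trick is a nice way to dispense with closed-range information in (1) and (4), at the price of having to justify the weak compactness of $\tilde P_K$ (order-zero plus uniform mass, as in Sullivan), which the paper also invokes but only in its case (2).

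Case (2), however, has a genuine gap, precisely at the step you flag. You argue by contraposition on the form side and must then promote the $(p,p)$-current $T$ (which annihilates only $F=Im\,d_{2p-1}\cap\E^{p,p}(X)_\R$) to a \emph{closed} $2p$-current $R$ with $R_{p,p}=T$ and $supp\,R\subseteq K$. First, the ``further Hahn--Banach extension'' of the functional $\alpha+d\Gamma\mapsto T(\alpha)$ from $\E^{p,p}(X)_\R+Im\,d_{2p-1}$ to $\E^{2p}(X)_\R$ requires this functional to be continuous for the induced topology, i.e. an estimate $|T(\alpha)|\leq \inf_{\Gamma}p(\alpha+d\Gamma)$ for some continuous seminorm $p$; closedness of $Im\,d_{2p-1}$ (the fact that $d$ is a topological homomorphism) does not give this --- one would need, e.g., that $\E^{p,p}(X)_\R+Im\,d_{2p-1}$ is closed, equivalently that $\E^{p,p}/F\to\E^{2p}/Im\,d_{2p-1}$ is a topological embedding, and statement (2) carries no hypothesis ensuring it on a general non-compact $X$. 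Second, even granting the extension, Hahn--Banach gives no control whatsoever on the support of $R$, while the statement requires $supp\,R\subseteq K$, not merely $supp\,R_{p,p}\subseteq K$. The paper sidesteps both problems by running case (2) in the other direction: it separates (Theorem 4.5) the compact basis $\tilde P_K$ from the closed convex set $\pi(Ker\,d_{2p-1}')$ inside $\E_{p,p}'(X)_\R$; the separating functional is then a $(p,p)$-form $\Omega$, transverse on $K$ by testing on Dirac currents, and since $\Omega$ has pure bidegree $(p,p)$ one has $(R,\Omega)=(\pi(R),\Omega)=0$ for every closed $R$, whence $\Omega\in(Ker\,d_{2p-1}')^{\perp}=(Im\,d_{2p-1})^-=Im\,d_{2p-1}$ because $d$ is always a topological homomorphism. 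In other words, in case (2) the object you should extract from the separation is the form, not the current; you should rework that case along these lines (or else supply the missing closedness of $\E^{p,p}(X)_\R+Im\,d_{2p-1}$ \emph{and} a genuinely support-preserving extension, neither of which your sketch provides).
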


\begin{proof} In all cases, one part of the proof is simple: if there exists the form $\Omega$ and also the current $T$ (or $R$) as given in the Theorem, we would have in cases (1), (2) and (4):

$(T, \Omega)=0$ (or $(R, \Omega)=0$) because ($R$ or) $T \in Ker L' = (Im L)^{\perp}$  for some operator $L$.

In case (3), $T \in ((Im \sigma_{2q+1}')^- + Ker \sigma_{2p-1}')^-$, that is, 
$T = \lim_{\varepsilon} (T_{\varepsilon}' + T_{\varepsilon}'')$ with $T_{\varepsilon}'' \in Ker \sigma_{2p-1}', T_{\varepsilon}' = 
\lim_{\mu} T_{\varepsilon, \mu}' $ with $T_{\varepsilon, \mu}' \in Im \sigma_{2q+1}'.$

Hence $(T, \Omega)= \lim_{\varepsilon} (T_{\varepsilon}', \Omega) + \lim_{\varepsilon} (T_{\varepsilon}'', \Omega)$; the first addendum vanishes because $\Omega \in Ker \sigma_{2q+1}$, the second one because $\Omega \in  Im \sigma_{2p-1}$.

Moreover, $(T, \Omega)= (\chi_K T, \Omega)> 0$, since $\chi_K T \geq 0$ does not vanish and $\Omega_x >0 \ \forall x \in K$. 
\smallskip

Let us prove now the converses.
\smallskip

Case (4). Consider on $X$ a hermitian metric $h$ with associated $(1,1)-$form $\gamma$, and let
$$A= \{ \Theta \in {\E}^{p,p}(X)_{\R} / \exists c > 0 \ {\rm such \  that} \ \Theta_x > c \gamma_x^p \ \forall x \in K \}$$
(the condition obviously means that $\Theta_x - c \gamma_x^p$ is strictly weakly positive).

It is easy to control that $A$ is a non empty convex open subset in the topological vector space ${\E}^{p,p}(X)_{\R}$. If there is no form $\Omega$ as stated in the Theorem, then $A \cap Im \sigma_{2p-1} = \emptyset$, where $Im \sigma_{2p-1}$ is a linear subspace in ${\E}^{p,p}(X)_{\R}$.

By the Hahn-Banach Theorem 4.4, we get a separating closed hyperplane, which is nothing but a current $T \in {\E}_{p,p}'(X)_{\R}$, for which we can suppose:

$T \in (Im \sigma_{2p-1})^{\perp} = Ker \sigma_{2p-1}'$,

$(T, \Theta)>0$ for every $\Theta \in A$ (thus $T \neq 0$).

Let us check that $T \geq 0$, i.e., by Definition 2.4, that $(T, \Omega) \geq 0, \ \forall \Omega \in WP^{p,p}(X)$. For every $\varepsilon >0$, $\Omega + \varepsilon \gamma^p \in A$, thus 
$$(T, \Omega) = (T, \lim_{\varepsilon \to 0} (\Omega + \varepsilon \gamma^p)) = \lim_{\varepsilon \to 0} (T, \Omega + \varepsilon \gamma^p) \geq 0.$$
Moreover, $supp T \subseteq K$; indeed, let $\alpha \in {\E}^{p,p}(X)_{\R}$ with $supp \ \alpha \subseteq X-K$; then for every $t \in \R$, it holds 
$t \alpha +  \gamma^p \in A$. Therefore $0 < (T,  t \alpha +  \gamma^p) = t(T, \alpha) +  (T, \gamma^p)$: this is not possible for every $t \in \R$, until $(T, \alpha)=0$, as required.
\smallskip

Case (1) is very similar, it is enough to replace  $\sigma_{2p-1}$ by $\sigma_{2q}$. This result was proved by Theorem 3.2(i) in \cite{ABL}.
\smallskip

Case (3). We can proceed as above, replacing  $Im \sigma_{2p-1}$ by $Im \sigma_{2p-1} \cap Ker \sigma_{2q+1}$,which is a linear subspace in ${\E}^{p,p}(X)_{\R}$.

Thus we get $T \in (Im \sigma_{2p-1} \cap Ker \sigma_{2q+1})^{\perp}$: here we use the closure of $Im \sigma_{2p-1}$ to go further (see f.i. \cite{Sc} p. 127), so we get, as required,
$$T \in (Im \sigma_{2p-1} \cap Ker \sigma_{2q+1})^{\perp} = ((Im \sigma_{2p-1})^{\perp} + (Ker \sigma_{2q+1})^{\perp})^- =  ((Im \sigma_{2q+1}')^- + Ker \sigma_{2p-1}')^-.$$
Case (2). 
Let us consider the l.c.s. ${\E}_{2p}'(X)_{\R} = ( \oplus_{a+b=2p} \E_{a,b}'(X))_\R $, and denote by $\pi$ the projection on the addendum ${\E}_{p,p}'(X)_{\R}$. Notice that $\pi(Ker d_{2p-1}')$ is a closed convex non-empty subset of ${\E}_{p,p}'(X)_{\R}$.

Take
$$P_K(X)= \{ T \in {\E}_{p,p}'(X)_{\R} / supp T \subseteq K, T \geq 0  \}$$
which is a closed convex cone with a compact basis given by  
$$\tilde P_K(X) = \{ T \in P / (T, \gamma^p) =1  \}.$$
By our hypothesis, $\pi(Ker d_{2p-1}') \cap \tilde P_K(X) = \emptyset$.

Using the Separation Theorem 4.5, we get a closed hyperplane in ${\E}_{p,p}'(X)_{\R}$, strictly separating $\pi(Ker d_{2p-1}')$ and $\tilde P_K(X)$; hence we get 
$\Omega \in {\E}^{p,p}(X)_{\R}$ such that $(T, \Omega)>0$ for every $T \in \tilde P_K(X)$, and $\Omega \in (\pi(Ker d_{2p-1}'))^{\perp}$.

The first condition assures that $\Omega_x >0 \ \forall x \in K$. In fact, by Proposition 2.2 we have to check that
 $\Omega_x(\sigma_p^{-1} V \wedge \overline V) > 0$
for every $V \in \Lambda_{p,0}(T_x'X)$, $V \neq 0$ and simple. But given such a vector, the Dirac current $T := \delta_x (\sigma_p^{-1} V \wedge \overline V) \in P_K(X)$, so that for some $c > 0$, $cT \in \tilde P_K(X)$ and thus $\Omega(\sigma_p^{-1} V \wedge \overline V) = T(\Omega) > 0$

The second one implies that, for every current $R \in Ker d_{2p-1}'$, $(R, \Omega) = (\pi(R), \Omega)=0$ since  $\Omega \in {\E}^{p,p}(X)_{\R}$. Thus $\Omega \in (Ker d_{2p-1}')^{\perp} = (Im d_{2p-1})^- = Im d_{2p-1}$, because $d$ is always a topological homomorphism.
\end{proof}

\medskip

{\bf 8.1.1 Remark.} Notice that, as before,
$8.1(1)  \Longrightarrow  8.1(2)  \Longrightarrow  8.1(3)  \Longrightarrow  8.1(4).$
The stronger condition, 8.1(1), is in fact a sort of \lq\lq local\rq\rq $p-$K\"ahler condition with exact (that means $\ddb-$exact) form.
\medskip

{\bf 8.1.2 Remark.} Every $n-$dimensional connected non compact manifold is $n-$complete; thus it is $nK$ with an exact form.

\medskip

In particular, when $M$ is a compact manifold, the above statement get simplified, as  Theorem 5.1 showed.

\medskip

And finally, let us consider an analogue of Theorem 3.2 for non compact manifolds.

\begin{thm} 
Let $X$ be a complex manifold of dimension $n \geq 2$, let $K$ be a compact subset of $X$; let $1 \leq p \leq n-1$ and denote $q := p-1$ in the subscript of the operators. 

\begin{enumerate}

\item Suppose $\sigma_{2q+1}$ is a topological homomorphism.Then:

there is a real $(p,p)-$form $\Omega$ on $M$ such that $\Omega \in Ker \sigma_{2q+1}$    and $\Omega_x >0 \ \forall x \in K \ \iff$ there are no non trivial currents $T \in {\E}_{p,p}'(X)_{\R}$, $T \geq 0$, $T \in Im \sigma_{2q+1}',$ and $supp T \subseteq K .$ 

\item Suppose $\sigma_{2p-1}$ and $\sigma_{2q+1}$ are topological homomorphisms, and that $Im \sigma_{2p-1}$ has finite codimension in $Ker \sigma_{2p}$.Then:

there is a real $(p,p)-$form $\Omega$ on $M$ such that $\Omega \in Ker \sigma_{2q+1} + Im \sigma_{2p-1}$    and $\Omega_x >0 \ \forall x \in K \ \iff$ there are no non trivial currents $T \in {\E}_{p,p}'(X)_{\R}$, $T \geq 0$, $T \in Ker \sigma_{2p-1}' \cap Im \sigma_{2q+1}',\  $ $supp T \subseteq K .$ 

\item There is a real $2p-$form $\Psi$ with $\Psi^{p,p} := \Omega$
 on $M$ such that $\Psi \in Ker d_{2p}$    and $\Omega_x >0 \ \forall x \in K \ \iff$
there are no non trivial currents $T \in {\E}_{p,p}'(X)_{\R}$, $T \geq 0$, $T \in Im d_{2p}',\  supp T \subseteq K .$ 

\item Suppose $\sigma_{2p}$ is a topological homomorphism.Then:

there is a  real $(p,p)-$form $\Omega$ on $M$ such that $\Omega \in Ker \sigma_{2p}$    and $\Omega_x >0 \ \forall x \in K \ \iff$ there are no non trivial currents $T \in {\E}_{p,p}'(X)_{\R}$, $T \geq 0$, $T \in Im \sigma_{2p}'$ and $ supp T \subseteq K .$

\end{enumerate}
\end{thm}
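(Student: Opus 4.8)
The plan is to transcribe the proof of Theorem (3.2)$'$ to the duality between the Fr\'echet space $\E^{p,p}(X)_\R$ and its dual $\E'_{p,p}(X)_\R$ (with the weak topology), imposing throughout that the supports of the competing currents lie in $K$, and replacing the finite-dimensionality input of Proposition 4.6 by the standing topological-homomorphism hypotheses.

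In each of the four cases one implication is immediate, exactly as in Theorems 3.2 and 8.1: if both a form $\Omega$ (in case (3), a $2p$-form $\Psi$ with $\Psi^{p,p}=\Omega$) and a current $T$ as described existed, then on one hand $(T,\Omega)=(\chi_K T,\Omega)>0$ by Claim 2.4.1, since $\chi_K T=T$ is a non-zero strongly positive current whose support lies in the set $K$ on which $\Omega$ is transverse; on the other hand $(T,\Omega)=0$, because $\Omega$ belongs to the kernel of the relevant operator while $T$ lies in the image of its transpose. For (2) one writes $\Omega=\Omega_1+\sigma_{2p-1}\beta$ with $\sigma_{2q+1}\Omega_1=0$ and uses that $T\in Ker\,\sigma_{2p-1}'\cap Im\,\sigma_{2q+1}'$ annihilates both summands; for (3) one uses $(T,\Omega)=(T,\Psi)$ and $T\in Im\,d_{2p}'\subseteq(Ker\,d_{2p})^{\perp}$.

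For the converses I would fix a hermitian metric with fundamental form $\gamma$ and introduce the closed convex cone $P_K(X)=\{T\in\E'_{p,p}(X)_\R:\ \mathrm{supp}\,T\subseteq K,\ T\ge 0\}$ together with its weak-$*$ compact basis $\tilde P_K(X)=\{T\in P_K(X):(T,\gamma^p)=1\}$ (compactness being Banach--Alaoglu, since supports lie in the fixed compact $K$ and masses are bounded). The hypothesis is precisely that the appropriate exact current subspace is disjoint from $\tilde P_K(X)$: $Im\,\sigma_{2q+1}'$ in (1), $Ker\,\sigma_{2p-1}'\cap Im\,\sigma_{2q+1}'$ in (2), $Im\,\sigma_{2p}'$ in (4); in (3) one works instead inside $\E'_{2p}(X)_\R=\bigoplus_{a+b=2p}\E'_{a,b}(X)_\R$, with the projection $\pi$ onto the $(p,p)$-summand, the cone $P'_K(X)=\{R:\ \mathrm{supp}\,R\subseteq K,\ \pi(R)=R,\ \pi(R)\ge 0\}$ and the disjoint subspace $Im\,d_{2p}'$. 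Each of these subspaces is weak-$*$ closed: for $d$ this is automatic, and in the remaining cases it follows from the Closed Range Theorem once the operator involved ($\sigma_{2q+1}$, $\sigma_{2p-1}$ or $\sigma_{2p}$) is a topological homomorphism, so that its image is closed. The Separation Theorem 4.5 then yields a closed hyperplane, that is, a form $\Omega$ (a $2p$-form $\Psi$ in case (3)) with $(T,\Omega)>0$ on $\tilde P_K(X)$ and $(T,\Omega)=0$ on the exact subspace. Testing against the Dirac currents $\delta_x(\sigma_p^{-1}V\wedge\overline V)\in P_K(X)$ for $x\in K$ and simple $V\ne 0$ gives, via Proposition 2.2, that $\Omega_x>0$ for every $x\in K$; and the vanishing on the exact subspace says that $\Omega$ annihilates the image of the transposed operator, hence --- compactly supported currents separating points of the space of smooth forms --- lies in the kernel of the operator itself. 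This gives $\Omega\in Ker\,\sigma_{2q+1}$ in (1), $\Psi\in Ker\,d_{2p}$ (so $d\Psi=0$ and $\Psi^{p,p}$ is transverse on $K$) in (3), and $\Omega\in Ker\,\sigma_{2p}$ in (4).

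The only genuinely delicate case is (2), where $\Omega$ lies a priori in $(Ker\,\sigma_{2p-1}'\cap Im\,\sigma_{2q+1}')^{\perp}$. By the bipolar theorem (see \cite{Sc} p.~127) this annihilator equals the weak closure of $(Ker\,\sigma_{2p-1}')^{\perp}+(Im\,\sigma_{2q+1}')^{\perp}$; using that $\sigma_{2p-1}$ is a topological homomorphism (so $(Ker\,\sigma_{2p-1}')^{\perp}=Im\,\sigma_{2p-1}$, a closed subspace) and that $(Im\,\sigma_{2q+1}')^{\perp}=Ker\,\sigma_{2q+1}$ always, this is $\overline{Im\,\sigma_{2p-1}+Ker\,\sigma_{2q+1}}$. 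To remove the closure I would argue as in Remark 4.5.1: $Im\,\sigma_{2p-1}$ and $Ker\,\sigma_{2q+1}$ are closed subspaces of $Ker\,\sigma_{2p}$, the former of finite codimension there by hypothesis, so Lemma 4.3 applied to the quotient map $Ker\,\sigma_{2p}\to Ker\,\sigma_{2p}/Ker\,\sigma_{2q+1}$ shows $Im\,\sigma_{2p-1}+Ker\,\sigma_{2q+1}$ is already closed, whence $\Omega\in Ker\,\sigma_{2q+1}+Im\,\sigma_{2p-1}$ as required. I expect this polar bookkeeping in case (2) --- and checking that the three hypotheses there are exactly what is needed to make it go through --- to be the main obstacle; everything else is a faithful transcription of the compact arguments of Theorems 3.2 and 8.1 with the support bound $K$ inserted.
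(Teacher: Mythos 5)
Your proof is correct, but in cases (1), (2) and (4) it runs the duality in the opposite direction from the paper. The paper proves the contrapositive: assuming no admissible form exists, it separates the open convex set $A=\{\Theta\in\E^{p,p}(X)_\R:\ \Theta_x>c\gamma_x^p\ \forall x\in K\}$ from the relevant subspace of \emph{forms} ($Ker\,\sigma_{2q+1}$, $Ker\,\sigma_{2q+1}+Im\,\sigma_{2p-1}$, $Ker\,\sigma_{2p}$) by the Hahn--Banach Theorem 4.4, obtains the current $T$ as the separating functional, gets $T\geq 0$, $T\neq 0$, $\mathrm{supp}\,T\subseteq K$ as in Theorem 8.1, and uses the topological-homomorphism hypothesis only at the last step, to replace the weak-$*$ closure of $Im\,\sigma'$ inside $(Ker\,\sigma)^{\perp}$ by $Im\,\sigma'$ itself. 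You separate on the \emph{current} side, as in the compact Theorem 3.2: the compact basis $\tilde P_K(X)$ against the weak-$*$-closed subspace of exact currents, the separating functional being the form. Both are legitimate, and the trade-off is visible in the hypotheses: the paper's route does not need compactness of $\tilde P_K(X)$ in (1), (2), (4), and as sketched its case (2) appears to use only the hypothesis on $\sigma_{2q+1}$; your route genuinely consumes all three hypotheses of (2) --- $\sigma_{2q+1}$ a homomorphism for the weak-$*$ closedness of $Im\,\sigma_{2q+1}'$ and the bipolar identity, $\sigma_{2p-1}$ a homomorphism for $(Ker\,\sigma_{2p-1}')^{\perp}=Im\,\sigma_{2p-1}$, and the finite codimension together with Lemma 4.3 (Remark 4.5.1) to drop the closure of $Ker\,\sigma_{2q+1}+Im\,\sigma_{2p-1}$ --- which is exactly the bookkeeping you single out. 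Conversely, your route requires the weak-$*$ compactness of $\tilde P_K(X)$; the correct justification is equicontinuity plus Alaoglu--Bourbaki (a mass bound against $\sup_K$ for strongly positive currents supported in $K$), not Banach--Alaoglu literally, since $\E_{p,p}'(X)_\R$ is not the dual of a Banach space --- but the paper itself asserts this compactness without proof in Theorem 8.1(2), so you are on its own ground. Finally, in case (3) your choice to separate inside $\E_{2p}'(X)_\R$ from $Im\,d_{2p}'$ itself, as in Theorem 3.2(3), is arguably cleaner than the paper's separation in $\E_{p,p}'(X)_\R$ from the projected set $\pi(Im\,d_{2p}')$, whose closedness the paper asserts but does not verify.
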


\begin{proof}
As seen in the previous theorems, 
in all cases, one part of the proof is simple, and does not require the hypotheses on topological homomorphims: if there exists the form $\Omega$ and also the current $T$ as given in the Theorem, we would have 
$(T, \Omega)> 0$ on $K$. 

But, in case (1), 
$(T, \Omega)=0$  because $\Omega \in Ker \sigma_{2q+1}$ and $T \in Im \sigma_{2q+1}' \subseteq (Ker \sigma_{2q+1})^{\perp}$.
The same holds in the other cases.

\medskip

For the converses, we go on as in the proof of Theorem 8.1. Let us sketch here only major changes.

Case (1). Consider the non empty convex open set  $A \in {\E}^{p,p}(X)_{\R}$; if no \lq\lq right\rq\rq form $\Omega$ exists, we get  $A \cap Ker \sigma_{2q+1} = \emptyset$,
thus there is a current $T \in {\E}_{p,p}'(X)_{\R}$, with
$T \in (Ker \sigma_{2q+1})^{\perp} = Im \sigma_{2q+1}'$, by the hypothesis, and $(T, \Theta)>0$ for every $\Theta \in A$.
This gives $T \geq 0$, $T \neq 0$ and $supp T \subseteq K$, as seen in the proof of Theorem 8.1. 

Case (1) was proved in \cite{ABL}, Theorem 3.2(ii).
 
It is the same, more or less, in cases (2) and (4).

In case (3), we separate $\tilde P_K(X)$, the compact basis of $P_K(X)= \{ T \in {\E}_{p,p}'(X)_{\R} / supp T \subseteq K, T \geq 0  \}$, from the closed convex set $\pi(Im d_{2p}')$
(notice that $Im d_{2p}'$ is closed because the operator $d$ is always a topological homomorphism).

Hence we get $\Omega \in (\pi(Im d_{2p}'))^{\perp}$. But $(\Omega, R) = (\Omega, \pi(R))$, since $\Omega$ has bidegree $(p,p)$, thus 
$\Omega \in (Im d_{2p}')^{\perp} = Ker d_{2p}$.
\end{proof}

\medskip

{\bf 8.2.1 Remark.} Notice that, as before,
$8.2(1)  \Longrightarrow  8.2(2)  \Longrightarrow  8.2(3)  \Longrightarrow  8.3(4).$
\medskip

{\bf 8.2.2 Remark.} In \cite{A2} we use closed real $(p,p)-$forms, which are positive on a fixed compact set, to give the definition of locally \pkk manifold (see \cite{A2}, Definition 6.1) and to study when, in a proper modification $f: \tilde X \to X$ with compact center, the property of being locally $(n-1)-$K\"ahler comes back from $X$ to $\tilde X$. 
\medskip

Let us give a simple application of Theorem 8.1.

Suppose $X$ has a compact (irreducible) analytic subspace $Y$ of dimension $m \geq 1$. Then $T := [Y]$ is a \lq\lq closed\rq\rq positive non-vanishing current of bidimension $(m,m)$ with $supp \ T = [Y]$.
Thus there are no \lq\lq exact\rq\rq $(m,m)-$forms on $X$ with $\Omega > 0$ on $Y$. 

Nevertheless, there are \lq\lq exact\rq\rq $(p,p)-$forms  on $X$ with $\Omega > 0$ on $Y$ for every $p > m$: in fact, if not, by Theorem 8.1(1) there would exist a pluriharmonic
 (i.e. $\ddb -$closed)  positive current of bidimension $(p,p)$, supported on $Y$, whose dimension is to small: hence $T=0$ (see f.i. \cite{AB4}, Theorem 1.2).

\medskip
We end this paper showing how one can obtain the results we cited in Theorem 6.4 about 1-convex manifolds using a sort of  \pkk form as given il Theorem 8.1. Let us prove only the following result:

\lq\lq Let $X$ be a 1-convex manifold with exceptional set $S$ of dimension $k$. Then $X$ is $p-$K\"ahler for every $p > k$, with a $\ddb$-exact $p-$K\"ahler form.\rq\rq

\begin{proof} Let $f: X \to Y$ be the Remmert reduction of $X$ (see Remark 6.1.1); 
$Y$ is embeddable in $\C^n$, hence it carries a K\"ahler form $\omega ' = i \ddb g$. 
Let $\omega  := f^* \omega '$; $\omega$ is positive on $X$ and transverse on $X - S$.

Consider a compactly supported current $T \in {\E}_{p,p}'(X)_{\R}$, $T \geq 0$, $T \in Ker \sigma_{2q}'$, i.e. $i \ddb T = 0$, as in Theorem 8.1(1). Since $\omega ^p\in Im \sigma_{2q}$, $T(\omega ^p) =0$, so that ${\rm supp} T \subseteq S$. By Theorem 1.2 in \cite{AB4}, $T=0$, because $p > k$.

Thus by Theorem 8.1(1) we get a  real $(p,p)-$form $\Omega$ on $X$ such that $\Omega \in Im \sigma_{2q}$, i.e. $\Omega = i \ddb \theta$,    and $\Omega_x >0 \ \forall x \in S$.

Take a compactly supported smooth function $\chi$ such that $0 \leq \chi \leq 1, \ \chi =1$ on $S$. For $C >>0$, $C\omega^p + i \ddb (\chi \theta)$ is a 
$\ddb -$exact real form, which is transverse on the whole of $X$.
\end{proof} 
\bigskip

\end{document}